\documentclass[a4paper,11pt]{article}

\addtolength{\hoffset}{-1cm}
\addtolength{\voffset}{-2cm}
\addtolength{\textwidth}{2cm}
\addtolength{\textheight}{4cm}

\usepackage[T1]{fontenc}
\usepackage{lmodern}

\usepackage{dsfont}

\usepackage[latin1]{inputenc}
\usepackage{amsmath}
\usepackage{amsthm}
\usepackage{amssymb}
\usepackage{mathrsfs}
\usepackage{graphicx}
\usepackage[all]{xy}
\usepackage{hyperref}

\usepackage{stmaryrd}
\usepackage{caption}

\usepackage{abstract} 

\newtheorem{thm}{Theorem}[section]
\newtheorem{cor}[thm]{Corollary}
\newtheorem{claim}[thm]{Claim}
\newtheorem{fact}[thm]{Fact}

\newtheorem{lemma}[thm]{Lemma}
\newtheorem{prop}[thm]{Proposition}

\theoremstyle{definition}
\newtheorem{definition}[thm]{Definition}

\newtheorem{remark}[thm]{Remark}
\newtheorem{question}[thm]{Question}
\newtheorem{problem}[thm]{Problem}
\newtheorem{conj}[thm]{Conjecture}

\usepackage{enumitem}

\title{Cactus groups from the viewpoint of geometric group theory}
\date{\today}
\author{Anthony Genevois}

\begin{document}

\maketitle

\begin{abstract}
Cactus groups and their pure subgroups appear in various fields of mathematics and are currently attracting attention from diverse mathematical communities. They share similarities with both right-angled Coxeter groups and braid groups. In this article, our goal is to highlight the tools offered by geometric group theory for the study of these groups. Among the new contributions made possible thanks to this geometric perspective, we describe an explicit and efficient solution to the conjugacy problem, and we prove that cactus groups are virtually cocompact special and acylindrically hyperbolic. 
\end{abstract}

\tableofcontents

\newpage
\section{Introduction}

\noindent
Given an integer $n \geq 2$, the \emph{cactus group} $J_n$ can be defined by the generators $s_{p,q}$, with $1 \leq p < q \leq n$, submitted to the following relations:
\begin{itemize}
	\item $s_{p,q}^2=1$ for every $1 \leq p<q \leq n$;
	\item $s_{p,q}s_{m,r} = s_{m,r}s_{p,q}$  for all $1 \leq p<q \leq n$ and $1 \leq m < r \leq n$ satisfying $[p,q] \cap [m,r] = \emptyset$;
	\item $s_{p,q}s_{m,r} = s_{p+q-r,p+q-m}s_{p,q}$ for all $1 \leq p<q \leq n$ and $1 \leq m < r \leq n$ satisfying $[m,r] \subset [p,q]$.
\end{itemize}
The presentation is close to presentations of right-angled Coxeter groups, but the third item introduces \emph{mock commutations} which distinguish cactus groups from such Coxeter groups.

\medskip \noindent
\begin{minipage}{0.3\linewidth}
\includegraphics[width=0.95\linewidth]{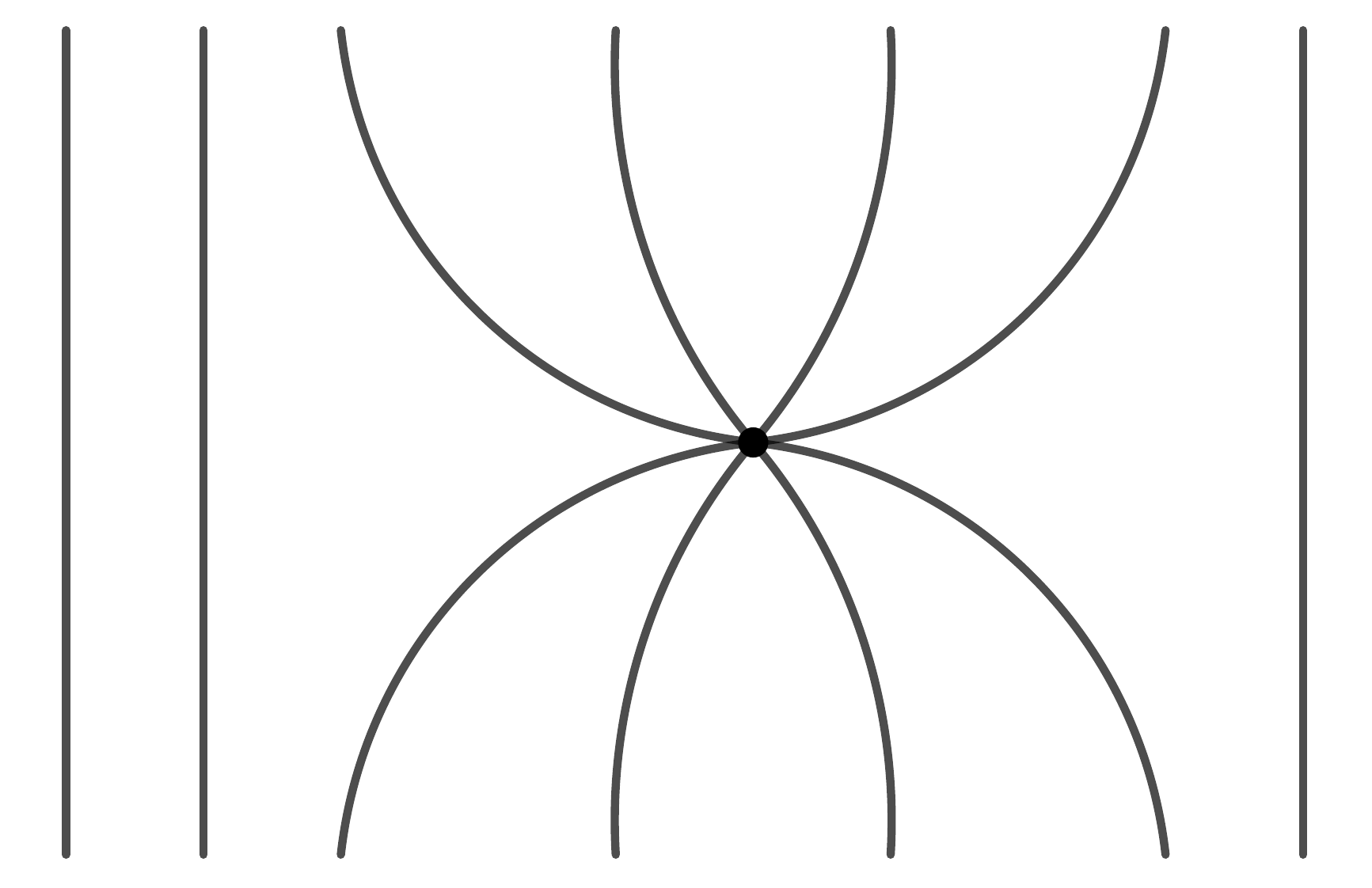}
\end{minipage}
\begin{minipage}{0.68\linewidth}
More visually, the elements of $J_n$ can represented by \emph{braided-like pictures}. Given $n$ vertical strands and two indices $1 \leq p<q \leq n$, the generator $s_{p,q}$ centrally inverts the strands numbered from $p$ to $q$, all the strands meeting simultaneously at a unique node during the process. For instance, the picture on the left illustrates the generator $s_{3,6}$ in the cactus group $J_7$. 
\end{minipage}

\medskip \noindent
Then, the relations given above can be visualised as follows:

\medskip \noindent
\includegraphics[width=\linewidth]{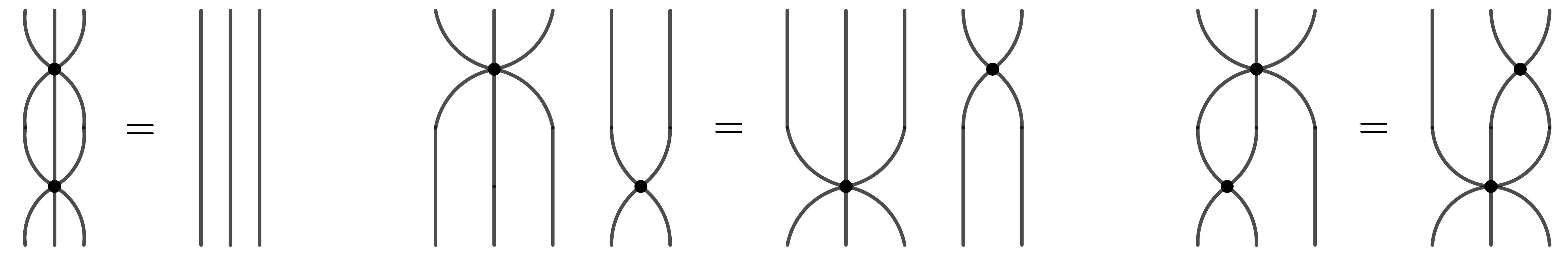}

\medskip \noindent
In this article, we use the convention that, given two elements $g,h \in J_n$, the product $gh$ is represented by braided-like picture obtained by concatenating a picture representing $h$ below a picture representing $g$. Thus, one obtains a natural morphism
$$\Sigma : J_n \to S_n$$
to the symmetric group $S_n$ as follows. Number from left to right the top and bottom endpoints of our $n$ strands from $1$ to $n$. Then, given an element $g \in J_n$ represented by a braided-like picture, the permutation $\Sigma(g)$ is defined by sending every $1 \leq i \leq n$ to the top number of the strands having $i$ as its bottom number\footnote{It is possible to define $\Sigma(g)$ from top to bottom instead of bottom to top, but then we have to modify our convention and represent a product $gh$ in $J_n$ by a picture obtained by concatenation a picture representing $h$ above a picture representing $g$.}. The kernel of $\Sigma$ is referred to as the \emph{pure cactus group} $PJ_n$. 

\medskip \noindent
Cactus groups and their pure subgroups appear in various fields of mathematics and are currently attracting attention from diverse mathematical communities. In this article, our goal is to highlight the tools offered by geometric group theory for the study of these groups. As an illustration of the relevancy of this point of view, we extract valuable information about cactus groups from our geometric perspective. This allows us to obtain new results but also to shed a new light on results previous obtained.

\paragraph{Median geometry.} The key property that underlies our geometric study of cactus groups is that the canonical Cayley graphs of these graphs are \emph{median}, which immediately implies that cactus groups act properly on cocompactly on median graphs.

\begin{thm}\label{thm:IntroMedian}
For every $n \geq 2$, the Cayley graph $\mathscr{C}_n$ of $J_n$ with respect to $\{s_{p,q} \mid 1 \leq p<q \leq n\}$ is median.
\end{thm}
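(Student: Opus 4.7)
The plan is to promote $\mathscr{C}_n$ to a CAT(0) cube complex $X_n$ having $\mathscr{C}_n$ as its $1$-skeleton; medianness would then follow from the classical equivalence (Chepoi, Gromov) between median graphs and $1$-skeleta of CAT(0) cube complexes.

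\medskip

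\noindent \textbf{Construction of $X_n$.} The key observation is that two generators $s_{p,q}$ and $s_{m,r}$ satisfy a commutation or a mock-commutation relation precisely when their supporting intervals are \emph{laminar}, i.e.\ either disjoint or nested. In that case a $4$-cycle appears in $\mathscr{C}_n$ from any base vertex. I would fill each such $4$-cycle by a square, and then inductively attach a $k$-cube whenever its full boundary already lies in $X_n^{(k-1)}$.

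\medskip

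\noindent \textbf{Simple connectedness and link condition.} Modulo the trivial bigon relations $s_{p,q}^2=1$, every defining relation of $J_n$ corresponds to one of the squares of $X_n^{(2)}$, so the Cayley $2$-complex agrees with $X_n^{(2)}$ and is simply connected; attaching higher-dimensional cubes preserves this. For Gromov's link condition, by transitivity of the $J_n$-action it suffices to check that $\mathrm{Lk}(1, X_n)$ is a flag simplicial complex. Its vertices are the generators $s_{p,q}$, and two of them span an edge iff their intervals are laminar. Since laminarity is a pairwise property, a pairwise laminar family is laminar as a whole; for such a family $I_1,\ldots,I_k$, I would construct an embedded $k$-cube at $1$ by linearly extending the nesting partial order (e.g.\ innermost first) and declaring its $2^k$ vertices to be the ordered subset-products of $s_{I_1},\ldots,s_{I_k}$. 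The commutation and mock-commutation relations would then have to be used to verify that these vertices are independent of the order chosen.

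\medskip

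\noindent \textbf{Main obstacle.} The hardest part of the argument is this last construction. Unlike in the right-angled Coxeter setting, mock-commutation changes the label of a generator when it is pushed past an outer one, so opposite edges of a cube may carry \emph{different} labels. One must keep track of these conjugations coherently, and, crucially, show that the $2^k$ candidate vertices represent distinct elements of $J_n$ so that the cube is genuinely embedded rather than degenerate. Producing a normal form on $J_n$ adapted to laminar structures, or exploiting the morphism $\Sigma \colon J_n \to S_n$ and similar combinatorial invariants, appears to be the natural route to carry out this distinctness verification.
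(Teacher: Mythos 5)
Your overall strategy---completing $\mathscr{C}_n$ to a cube complex and invoking the equivalence between median graphs and one-skeleta of CAT(0) cube complexes---is viable and close in spirit to the paper, which also works with the Cayley square complex of the presentation. But note one structural difference: the paper uses a criterion that only requires control of squares and of cycles of three squares (simple connectedness, embedded squares, no two distinct squares sharing two consecutive edges, and every cycle of three squares spanning the $2$-skeleton of a $3$-cube), so it never has to build or verify arbitrary $k$-cubes, whereas your route through Gromov's link condition forces you to establish flagness in every dimension and hence to produce genuine embedded $k$-cubes for every pairwise laminar family.

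The genuine gap is the one you flag yourself and then leave open: you never prove that the $2^k$ candidate vertices of a cube are pairwise distinct, nor, more basically, that squares are embedded and that two distinct squares cannot share two consecutive edges. Without these facts the link of $1$ need not even be a (simple) simplicial complex, so Gromov's flag condition cannot be invoked, and the whole argument collapses at its decisive step. This verification is exactly the technical heart of the matter, and your closing remark that ``producing a normal form on $J_n$ adapted to laminar structures'' should do it is precisely what the paper carries out: it defines a normal form for interval diagrams (reduce, and flip only so as to raise the larger interval above the smaller one) and proves its existence and uniqueness by a termination plus local confluence (Newman's lemma) argument; embeddedness of squares, and hence the non-degeneracy you need, is then extracted from uniqueness of this normal form. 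Also beware that the morphism $\Sigma\colon J_n\to S_n$ cannot settle distinctness on its own, since it is far from injective (it kills all of $PJ_n$). As written, your text is a reasonable plan with the key lemma missing rather than a proof.
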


\noindent
A connected graph $X$ is \emph{median} if, for all vertices $x_1,x_2,x_3 \in X$, there exists a unique vertex $m \in X$, referred to as the \emph{median point} of $x_1,x_2,x_3$, which satisfies
$$d(x_i,x_j)=d(x_i,m)+d(m,x_j) \text{ for all } i \neq j.$$
Simplicial trees are the among the simplest examples, where the median point of three vertices is given by the centre of the tripod they delimit. It is worth mentioning that median graphs coincide with one-skeleta of \emph{CAT(0) cube complexes}, and that the latter terminology is more common in the literature (despite the fact that CAT(0) cube complexes are neither thought of nor studied via their CAT(0) geometry but via their median geometry). Theorem~\ref{thm:IntroMedian} can be deduced from \cite{MR1668119} and \cite{MR2395169}. We give a direct proof in Section~\ref{section:CactusMedian}. 

\medskip \noindent
Knowing that a group admits a proper and cocompact action on a median graph automatically implies various properties, including:

\begin{cor}
The following assertions hold.
\begin{itemize}[itemsep=0pt]
	\item Cactus groups are biautomatic, which implies that:
\begin{itemize}[itemsep=0pt]
	\item their word and conjugacy problems are solvable;
	\item they satisfy a quadratic isoperimetric inequality;
	\item they contain only finitely many conjugacy classes of finite subgroups;
	\item every infinite-order element admits only finitely many roots;
	\item abelian subgroups are finitely generated and undistorted;
	\item they have rational growth.
\end{itemize}
	\item Cactus groups are of type $F_\infty$.
	\item Every subgroup in a cactus group either contains a non-abelian free subgroup or is virtually abelian.
	\item Cactus groups have finite asymptotic dimensions. 
	\item Cactus groups are a-T-menable.
	\item The (equivariant) Hilbert space compression of cactus groups is $1$. 
	\item Cactus groups satisfy the Rapid Decay property.
\end{itemize}
\end{cor}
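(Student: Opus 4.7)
The plan is to note that Theorem~\ref{thm:IntroMedian} provides a free, vertex-transitive, hence proper and cocompact, action of $J_n$ on the median graph $\mathscr{C}_n$. Equivalently, $J_n$ acts geometrically on a finite-dimensional CAT(0) cube complex, and each bullet point of the corollary is a standard consequence of such an action. The write-up therefore consists essentially of invoking the appropriate results from the literature on cocompactly cubulated groups.

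For the biautomaticity item, I would quote Niblo--Reeves, who construct a biautomatic structure from any proper cocompact action on a CAT(0) cube complex. The listed sub-items then follow from classical theorems on biautomatic groups: Epstein et al.\ yield solvability of the word problem, the quadratic isoperimetric inequality, and rationality of growth, while Gersten--Short supply solvability of the conjugacy problem, the finiteness of roots of infinite-order elements, the finiteness of conjugacy classes of finite subgroups, and the finite generation and undistortion of abelian subgroups. Finiteness of type $F_\infty$ is then automatic, since the cube complex is contractible, finite-dimensional, and the action on cubes is free with compact quotient.

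The alternative between virtual abelianity and containing a non-abelian free subgroup is the Tits alternative established by Sageev--Wise (refined by Caprace--Sageev) for geometrically cubulated groups, combined with the fact, coming from biautomaticity, that virtually solvable subgroups of $J_n$ are virtually abelian. Finite asymptotic dimension follows from the finite dimension of the cube complex via Wright, and a-T-menability is due to Niblo--Reeves, who build a proper affine isometric action on a Hilbert space from the wall structure; the same action delivers equivariant Hilbert space compression equal to $1$. Finally, the Rapid Decay property in the cocompactly cubulated setting is a result of Chatterji--Ruane.

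The main obstacle here is not mathematical but bibliographical: the properties listed appear in the literature under various formulations (for biautomatic groups, for CAT(0) cube complex groups, for groups with the Haagerup property, etc.), so some care is needed to pair each assertion with a statement that applies verbatim to $J_n$ once Theorem~\ref{thm:IntroMedian} is known. No additional argument specific to cactus groups is required beyond the median structure established in Theorem~\ref{thm:IntroMedian}.
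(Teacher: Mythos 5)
Your overall strategy is exactly what the paper intends: the corollary is stated as an immediate consequence of Theorem~\ref{thm:IntroMedian}, with each item covered by a standard result about groups acting properly and cocompactly on median graphs (equivalently, CAT(0) cube complexes), so a citation-level argument is all that is required and your pairing of items with the literature (Niblo--Reeves for biautomaticity and a-T-menability, Wright for asymptotic dimension, Chatterji--Ruane for Rapid Decay, the cubical Tits alternative, etc.) matches the paper's intent.

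There is, however, one concrete error in your justification of type $F_\infty$: you claim that ``the action on cubes is free with compact quotient.'' This is false. Cactus groups have plenty of torsion (every generator $s_{p,q}$ has order two), so $J_n$ cannot act freely on a contractible finite-dimensional complex; concretely, left multiplication by $s_{p,q}$ inverts the edge joining $1$ and $s_{p,q}$, hence stabilises that cube. What is true, and what the paper uses, is that the action is free on \emph{vertices} (it is the left regular action on a Cayley graph) and therefore proper with \emph{finite} cube-stabilisers and compact quotient. Type $F_\infty$ then follows from the standard fact that a group acting cocompactly on a contractible (cube) complex with finite -- hence $F_\infty$ -- cell stabilisers is itself of type $F_\infty$ (Brown's criterion), not from freeness. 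The same caveat applies to your opening sentence: ``free, vertex-transitive'' should read ``free on vertices and vertex-transitive''; with that reading the deduction of properness and cocompactness is correct, and none of the other cited results require freeness, only properness and cocompactness.
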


\noindent
The fact the median graphs on which cactus groups act are explicit allows us to exploit this geometry even more efficiently in order to solve various problems. As an illustration, we improve the second item of the corollary above. 

\begin{thm}
Word and conjugacy problems in cactus groups can be solve efficiently and explicitly.
\end{thm}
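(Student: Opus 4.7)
The plan is to build on Theorem~\ref{thm:IntroMedian}. Since $\mathscr{C}_n$ is median and the action of $J_n$ on it is proper and cocompact, standard algorithmic results for cocompactly cubulated groups (either through biautomatic structures or directly through the median geometry) imply that the word and conjugacy problems are solvable. To make the algorithms \emph{explicit} and \emph{efficient}, I would proceed by turning the underlying hyperplane combinatorics into concrete computations in terms of the intervals $[p,q] \subseteq \{1,\dots,n\}$ that index the generators.

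For the word problem, the first step is to give a combinatorial description of the hyperplanes of $\mathscr{C}_n$: each edge $\{g, g s_{p,q}\}$ is dual to a unique hyperplane, and I would characterise when two edges are parallel by unravelling the commutation and mock commutation relations (under mock commutation, the interval $[m,r]$ is replaced by its reflection $[p+q-r, p+q-m]$ inside $[p,q]$, which gives a transparent rule for tracking hyperplanes along a word). Using the standard criterion that a word in the generators is geodesic iff its corresponding edge-path crosses each hyperplane at most once, I would describe a cancellation procedure: locate two letters $s_{p_i,q_i}$ and $s_{p_j,q_j}$ of the word that cross the same hyperplane, transport one next to the other using the defining relations, and cancel them via $s^2 = 1$. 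Iterating yields a geodesic representative in polynomial time, after which equality of two elements is reduced to comparing geodesic representatives modulo the defining (mock) commutations.

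For the conjugacy problem, I would exploit the classification of isometries of median graphs: every $g \in J_n$ acts either elliptically, fixing a vertex, or loxodromically, translating along a combinatorial axis, and in either case its minimal displacement set $\mathrm{Min}(g)$ is a non-empty convex subcomplex of $\mathscr{C}_n$ entirely determined by the set of hyperplanes that $g$ either inverts or skewers. Two elements $g, h$ are then conjugate iff some $x \in J_n$ maps $\mathrm{Min}(g)$ isomorphically onto $\mathrm{Min}(h)$ in a way compatible with the translation actions. The combinatorics of hyperplanes in $\mathscr{C}_n$, made explicit in the first step, lets one encode $\mathrm{Min}(g)$ succinctly by the subset of intervals it involves and reduces the existence of such an $x$ to a finite combinatorial search.

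The main obstacle will be the efficiency analysis rather than the existence of an algorithm. The delicate points will be to: (i) bound the number of hyperplanes crossed by a shortest word representing $g$ and the cost of identifying hyperplane collisions inside a word; (ii) store $\mathrm{Min}(g)$ in polynomial size via the interval data and compute it from an input word in polynomial time; and (iii) bound the search space in the conjugacy check, which requires exploiting specifically the structure of intervals $[p,q] \subseteq \{1,\dots,n\}$ rather than relying on the generic median-graph framework. Overcoming (iii) is where I expect the bulk of the technical work to lie.
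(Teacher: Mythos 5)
Your word-problem argument is essentially the paper's: the paper also characterises geodesic words as the ``irreducible'' ones via the criterion that a geodesic crosses each hyperplane at most once, shortens a non-geodesic word by shifting a letter along the defining relations until it meets its partner and cancels via $s^2=1$, and identifies two geodesic representatives of the same element up to mock commutations (flipping squares). The paper additionally records a second, equivalent solution via the normal form for interval diagrams (Proposition~\ref{prop:Normal}), but your route is the same as its geometric one, and it is fine.

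The conjugacy half, however, has a genuine gap exactly where you locate it, and it is not a mere matter of ``efficiency analysis''. Reducing conjugacy to comparing $\mathrm{Min}(g)$ and $\mathrm{Min}(h)$ does not by itself yield an algorithm: the set of candidate conjugators $x$ is a coset of the centraliser of $g$, which is typically infinite, and $\mathrm{Min}(g)$ is itself an unbounded convex subcomplex (it is invariant under the centraliser), so neither ``store $\mathrm{Min}(g)$ in polynomial size'' nor ``a finite combinatorial search'' is justified; you would need an a priori bound on the length of a shortest conjugator or on a compact fundamental domain for the centraliser action, and producing such a bound is precisely the hard content of the conjugacy problem. (A smaller inaccuracy: since $J_n$ acts freely on the vertices of its Cayley graph, torsion elements fix no vertex --- they invert edges or stabilise cubes --- so the elliptic/loxodromic dichotomy as you state it needs the cube-completion or a subdivision.) The paper avoids this entirely by a different mechanism: it works with annular diagrams over the square presentation. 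Using the fact that dual curves are labelled by sets of strands and that transverse curves carry nested or disjoint labels, it shows that a minimal-area annular diagram whose two boundary words are cyclically irreducible is combinatorially a product of a circle with an interval. Reading off the concentric circles gives the explicit criterion: two cyclically irreducible words are conjugate if and only if, after mock commutations, one is carried to a cyclic shift of the other by a sequence of \emph{simple-conjugations} (length-preserving conjugations by a generator $s_{a,b}$ whose interval is stabilised by the word's permutation). Since every operation preserves word length, the search space is finite and explicitly bounded, which is what makes the solution efficient. To complete your approach you would have to prove an analogous conjugator-length or fundamental-domain bound, which your proposal does not supply.
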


\noindent
We refer to Section~\ref{section:WandCP} for precise statements. In particular, we recover the solution to the word problem described in \cite{Paolo} and give it a geometric interpretation. The solution to the conjugacy problem is new.

\paragraph{Conspicial actions.} Since cactus groups naturally act on median graphs, then so do their pure subgroups. It turns out that the induced actions of pure cactus groups thus obtained satisfy remarkable properties.

\begin{thm}
For every $n \geq 2$, the action of the pure cactus group $PJ_n$ on the median graph $\mathscr{C}_n$ is conspicial and has trivial cube-stabilisers.
\end{thm}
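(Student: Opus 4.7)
The plan is to split the statement into two independent parts: triviality of cube-stabilisers, which follows quickly from purity, and the hyperplane-theoretic conditions making the action conspicial.

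For cube-stabilisers, note that since $\mathscr{C}_n$ is the Cayley graph of $J_n$, the left-regular action is free on vertices. The cubes of $\mathscr{C}_n$ are spanned by families of pairwise commuting generators, i.e.\ by collections $s_{I_1},\ldots,s_{I_k}$ indexed by pairwise disjoint intervals $I_j \subseteq \{1,\ldots,n\}$. If $h \in PJ_n$ stabilises such a cube $C$, then picking a corner $g \in C$ we can write $hg = gw$ for some $w$ in the elementary abelian $2$-group $\langle s_{I_1},\ldots,s_{I_k}\rangle$. Applying $\Sigma$ then gives $\Sigma(w) = 1$. However, each $\Sigma(s_{I_j})$ is the non-trivial reversal of the interval $I_j$, and since the $I_j$ are pairwise disjoint, $\Sigma$ restricts to an injection on $\langle s_{I_1},\ldots,s_{I_k}\rangle$. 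Hence $w = 1$, so $h$ fixes $g$, and freeness of the vertex action forces $h = 1$.

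For the conspicial part, I would first extract from the proof of Theorem~\ref{thm:IntroMedian} a combinatorial description of the hyperplanes of $\mathscr{C}_n$, together with a concrete way of parametrising each edge by its generator label $s_{p,q}$ and the strand configuration at its endpoints. The central observation is that this refined labelling is preserved by the $PJ_n$-action: since $\Sigma(h) = 1$ for $h \in PJ_n$, left multiplication by $h$ does not alter the bottom-to-top strand correspondence. This refined datum distinguishes hyperplanes that lie in a common $J_n$-orbit but belong to different $PJ_n$-orbits, and it is precisely this mechanism that enforces the conspiciality conditions for $PJ_n$ even though they can fail for $J_n$.

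The main technical obstacle will be the no-inter-osculation condition. Non-inversion and the absence of same-orbit crossings follow relatively directly once parallelism of edges is understood and the injectivity of $\Sigma$ on cube subgroups established above is invoked to distinguish parallel edges in the same $PJ_n$-orbit. Inter-osculation is more delicate: one must show that if hyperplanes $H_1, H_2$ cross in a square while admitting $PJ_n$-translates that osculate without crossing, then an element realising such a translation satisfies a relation incompatible with $\Sigma = 1$. I expect to handle this by a careful analysis of the link of a vertex of $\mathscr{C}_n$, expressed in terms of intervals $[p,q] \subseteq \{1,\ldots,n\}$ and their disjointness and nesting relations, combined with the explicit mock-commutation rule $s_{p,q}s_{m,r} = s_{p+q-r,p+q-m}s_{p,q}$ when $[m,r] \subseteq [p,q]$.
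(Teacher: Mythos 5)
Your argument for trivial cube-stabilisers rests on a false description of the cubes of $\mathscr{C}_n$. Cubes are not only spanned by commuting generators with pairwise \emph{disjoint} intervals: by Corollary~\ref{cor:Cycles}, squares also arise from mock commutations, i.e.\ from \emph{nested} intervals $[m,r]\subset[p,q]$, and these are precisely the feature that distinguishes cactus groups from right-angled Coxeter groups. For such a square with corner $g$ the vertex set is $\{g,\; gs_{p,q},\; gs_{p,q}s_{m,r},\; gs_{p+q-r,p+q-m}\}$, which is not of the form $g\langle s_{I_1},\ldots,s_{I_k}\rangle$ for an elementary abelian $2$-group generated by the corner labels; indeed $\langle s_{1,2},s_{1,4}\rangle\leq J_4$ is dihedral of order $8$, not $(\mathbb{Z}/2\mathbb{Z})^2$. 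So your reduction ``$hg=gw$ with $w$ in an elementary abelian $2$-group on which $\Sigma$ is injective'' breaks down exactly in the nested case, and for higher-dimensional cubes mixing disjoint and nested intervals the injectivity of $\Sigma$ on the vertex set is the whole point to be proved. The paper's route (which your strategy would have to reproduce) is to label the hyperplanes crossing the cube by sets of strands, note these sets are pairwise disjoint or properly nested, order them so that nesting decreases, and track the leftmost strand of the first set to show it is moved by $\Sigma(g)$; some argument of this kind is genuinely needed here.

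The conspiciality half is a plan rather than a proof. Everything hinges on two facts you never establish: first, that the strand-set label $\Sigma(g)([i,j])$ of an edge $(g,gs_{i,j})$ is constant on its hyperplane, so that hyperplanes carry well-defined labels which $PJ_n$ preserves because $\Sigma(h)=1$ (Lemma~\ref{lem:LabelsHyp}); second, that two hyperplanes meeting at a vertex are transverse if and only if their labels are disjoint or properly nested, which follows from Corollary~\ref{cor:Cycles} (Lemma~\ref{lem:HypTransverseLabel}). Once these are available, all three conditions follow in the same few lines: a self-intersection or a self-osculation would give, at a common vertex, either two transverse hyperplanes with equal labels (contradicting the disjoint-or-properly-nested criterion) or two distinct edges at that vertex with the same label (impossible, since the label determines the generator); and an inter-osculation would give hyperplanes $H_2$ and $gH_1$ with labels disjoint or properly nested (because $H_1$ and $H_2$ are transverse and labels are $PJ_n$-invariant) meeting at a vertex without being transverse, again contradicting the criterion. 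In particular inter-osculation is not ``more delicate'' than the other conditions, and the link analysis you defer is not where the work lies: the missing step is precisely the label/transversality dictionary above.
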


\noindent
An action on a median graph is \emph{conspicial}\footnote{This property is called \emph{$C$-special} in \cite{MR2377497}, shortened as \emph{special} in the recent literature. In order to minimise conflicts with the every-day language, we propose to use the Latin word \emph{conspicial} which is close to \emph{special} both semantically and phonetically.} if there is no \emph{self-intersection}, \emph{self-osculation}, \emph{inter-osculation} as defined in Section~\ref{section:Pure}. This additional information allows us to deduce various properties holding for (pure) cactus groups, such as:

\begin{cor}
The following assertions hold. 
\begin{itemize}[itemsep=0pt]
	\item Cactus groups are linear over $\mathbb{Z}$, and in particular residually finite.
	\item Cactus groups have linear residual growth.
	\item Convex cocompact subgroups in pure cactus groups are virtual retracts, hence separable.
	\item Pure cactus groups are residually nilpotent.
	\item Every subgroup in a cactus group either is SQ-universal or contains a non-abelian free subgroup. 
\end{itemize}
\end{cor}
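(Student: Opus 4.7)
The corollary packages standard consequences of virtual cocompact speciality. The preceding theorem says that $PJ_n$ acts freely (trivial cube-stabilisers), cocompactly, and conspicially on $\mathscr{C}_n$, so $PJ_n$ is cocompact special in the sense of Haglund--Wise; since $[J_n:PJ_n]<\infty$, the full cactus group $J_n$ is virtually cocompact special, which is the common engine for all five bullets.

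The first two items follow from Haglund--Wise's embedding of $PJ_n$ into a right-angled Artin group $A_\Gamma$. Right-angled Artin groups are linear over $\mathbb{Z}$, and linearity over $\mathbb{Z}$ is inherited by subgroups and extended to finite-index overgroups via induced representations, giving linearity for $J_n$ itself; residual finiteness is then Mal'cev's theorem, and linear residual growth comes from Bou-Rabee's quantitative bounds for virtually special groups. The third item is Haglund's theorem that convex subgroups of cocompact special groups are virtual retracts, combined with the fact that virtual retracts of finitely generated residually finite groups are separable. The fourth item uses the same embedding together with Duchamp--Krob's theorem that right-angled Artin groups are residually torsion-free nilpotent, a property which passes to subgroups.

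The dichotomy in the final bullet is the most delicate. I would apply Caprace--Sageev's rank rigidity to the action of an arbitrary subgroup $H \leq J_n$ on $\mathscr{C}_n$: either $H$ virtually stabilises a Euclidean flat (so is virtually abelian, and one appeals to the already-obtained Tits alternative for cactus groups to locate the free subgroup or rule this case out), or $H$ contains a rank-one isometry of $\mathscr{C}_n$. In the second case, the announced acylindricity of the action of $J_n$ on $\mathscr{C}_n$ promotes $H$ to an acylindrically hyperbolic group, which is SQ-universal by Dahmani--Guirardel--Osin.

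The main obstacle is arranging the rank-rigidity/acylindricity step so that the precise form of the stated dichotomy is recovered and the two horns match the statement of the bullet; the remaining items reduce to assembling classical theorems from the virtually cocompact special literature.
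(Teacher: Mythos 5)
Your overall strategy, namely deducing everything from the fact that $PJ_n$ acts freely, cocompactly and conspicially on $\mathscr{C}_n$ (so is cocompact special) and that $J_n$ contains it with finite index, is exactly the route the paper takes, and your treatment of the first three items is the standard one. But your fourth item does not quite go through as written: what the conspicial action yields directly (this is the paper's Theorem~\ref{thm:Special} applied with the strand-labelling) is an embedding $PJ_n \hookrightarrow C(\Gamma)$ into a right-angled \emph{Coxeter} group. To feed $PJ_n$ itself into Duchamp--Krob you would need an embedding into a right-angled Artin group, i.e.\ the A-special conditions (two-sided hyperplanes, no direct self-osculation in the quotient), which you have not verified; and you cannot instead pass to a finite-index A-special subgroup, because residual (torsion-free) nilpotence is not inherited by finite-index overgroups (e.g.\ $S_3$ versus its cyclic subgroup of index $2$). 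The repair is to stay with the Coxeter embedding: right-angled Coxeter groups are graph products of $\mathbb{Z}/2\mathbb{Z}$'s, hence residually (finite $2$-groups), hence residually nilpotent, and this passes to the subgroup $PJ_n$.

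The genuine gap is the last bullet, which you yourself leave open, and whose sketch rests on a dichotomy that rank rigidity does not provide. Caprace--Sageev applies to the essential core of the action of a subgroup $H$ (so one must first deal with non-essentiality, fixed points at infinity, and possibly non-finitely-generated $H$), and its alternative is ``the essential core splits as a product'' versus ``$H$ contains a rank-one element''; the product case is emphatically not ``virtually abelian'': $F_2 \times F_2$ embeds in $J_n$ for $n \geq 8$, has no rank-one element on its core, and is handled only by an induction on factors or dimension --- precisely the step you defer. Moreover, what promotes $H$ in the rank-one case is not ``the acylindricity of the $J_n$-action on $\mathscr{C}_n$'' (acylindricity of an action on a non-hyperbolic median graph gives nothing by itself), but the fact that a proper action with a contracting isometry forces $H$ to be virtually cyclic or acylindrically hyperbolic (the mechanism of Proposition~\ref{prop:AcylHyp}), after which SQ-universality follows from Dahmani--Guirardel--Osin. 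The paper's intended derivation of this bullet is different and avoids all of this: transport subgroups into the right-angled Coxeter group $C(\Gamma)$ via the conspicial action and invoke the known Tits-type alternatives for subgroups of graph products (Antol\'in--Minasyan), which give the stated dichotomy directly; if you want to keep your cubical argument, you must supply the missing induction for the product case rather than treat it as virtually abelian.
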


\noindent
A fundamental consequence of admitting a conspicial action on a median graph is that this allows us to embed our group into a right-angled Coxeter group. 

\begin{cor}
Fix an $n \geq 2$ and let $\Gamma$ denote the graph whose vertices are the subsets in $\{1, \ldots, n\}$ and whose edges connect two subsets whenever they are disjoint or nested. The pure cactus group $PJ_n$ naturally embeds into the right-angled Coxeter group $C(\Gamma)$.
\end{cor}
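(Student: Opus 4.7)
The plan is to apply the standard Haglund--Wise embedding theorem for conspicial actions: the preceding theorem asserts that $PJ_n$ acts conspicially on $\mathscr{C}_n$ with trivial cube-stabilisers, so $PJ_n$ embeds into the right-angled Coxeter group $C(\Delta)$ whose defining graph $\Delta$ has as vertices the $PJ_n$-orbits of hyperplanes of $\mathscr{C}_n$ and as edges the pairs of orbits admitting crossing representatives. The bulk of the proof then consists in identifying $\Delta$ with the induced subgraph of $\Gamma$ spanned by subsets of cardinality at least $2$.

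First I would introduce a $PJ_n$-invariant labelling of the hyperplanes of $\mathscr{C}_n$ by subsets of $\{1,\ldots,n\}$: to the hyperplane dual to an edge $\{g, gs_{p,q}\}$ I assign the subset $\Sigma(g)([p,q])$, i.e.\ the set of labels of the strands that are inverted by $s_{p,q}$ when applied below the braided picture of $g$. Checking well-definedness on hyperplanes amounts to verifying that the two types of squares in $\mathscr{C}_n$ preserve the labels of parallel edges; in the disjoint case this is immediate, and in the nested case it follows from the identity $\Sigma(s_{p,q})([m,r]) = [p+q-r, p+q-m]$ when $[m,r] \subseteq [p,q]$. The $PJ_n$-invariance is then immediate from $PJ_n = \ker \Sigma$.

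Next I would show that two hyperplanes lie in the same $PJ_n$-orbit if and only if they carry the same label, and that two labels $A, B$ occur on a crossing pair of hyperplanes if and only if they are disjoint or nested. For the orbit classification, the sub-cactus groups $\langle s_{m,r} : [m,r] \subseteq [p,q]\rangle$ and $\langle s_{m,r} : [m,r] \cap [p,q] = \emptyset\rangle$ both stabilise the hyperplane dual to $\{1, s_{p,q}\}$, and their $\Sigma$-images together exhaust the full set-stabiliser $S_{[p,q]} \times S_{\{1,\ldots,n\} \setminus [p,q]}$ in $S_n$; combined with the surjectivity of $\Sigma$, this yields the desired bijection between $PJ_n$-orbits and labels. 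The crossing description comes from direct inspection of the two square types, with the converse direction using the surjectivity of $\Sigma$ to realise any disjoint or nested pair of subsets as the labels of a concrete square.

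The main obstacle is the orbit-identification step: verifying that the two candidate stabilising subgroups have combined $\Sigma$-image equal to the full set-stabiliser requires careful orbit counting and a precise accounting of the mock-commutation relations. Once this is done, $\Delta$ is identified with the induced subgraph of $\Gamma$ spanned by subsets of cardinality at least $2$. In $\Gamma$, every singleton and the empty set are connected to every other vertex, so their corresponding generators form a central $(\mathbb{Z}/2)^{n+1}$ direct factor of $C(\Gamma)$; in particular $C(\Delta)$ is a subgroup of $C(\Gamma)$, and composing with the Haglund--Wise embedding produces $PJ_n \hookrightarrow C(\Gamma)$ as desired.
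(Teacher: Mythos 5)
Your strategy is genuinely different from the paper's: you invoke the Haglund--Wise orbit-graph embedding and then try to identify the $PJ_n$-orbit graph with the subgraph of $\Gamma$ spanned by subsets of size at least two, whereas the paper never classifies hyperplane orbits at all. It proves a colouring version of the embedding criterion (Theorem~\ref{thm:Special}), whose hypotheses only concern pairs of hyperplanes with intersecting carriers, and verifies them directly from the labelling of hyperplanes by sets of strands (Lemmas~\ref{lem:LabelsHyp} and~\ref{lem:HypTransverseLabel}). Your route, if completed, would prove the stronger statement that orbits correspond bijectively to labels, but it is precisely at this step --- the one you flag as the main obstacle --- that your sketch fails. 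You claim that the $\Sigma$-images of $\langle s_{m,r} : [m,r]\subseteq[p,q]\rangle$ and $\langle s_{m,r} : [m,r]\cap[p,q]=\emptyset\rangle$ together exhaust the setwise stabiliser $S_{[p,q]}\times S_{\{1,\ldots,n\}\setminus[p,q]}$. This is false whenever $2\leq p$ and $q\leq n-1$: every interval disjoint from $[p,q]$ lies in $[1,p-1]$ or in $[q+1,n]$, so the second subgroup only contributes $S_{[1,p-1]}\times S_{[q+1,n]}$ and never mixes the two components of the complement. Concretely, for $n=4$ and $[p,q]=[2,3]$ there is no interval of length at least two disjoint from $[2,3]$, so your two subgroups give only $\langle (2\,3)\rangle$, missing the transposition $(1\,4)$.

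The orbit--label bijection itself is true, but repairing your argument requires more: one must add stabilising elements such as $s_{C,D}$ with $[p,q]\subseteq[C,D]$ and $C+D=p+q$ (for instance $s_{1,4}$ in the example above), which commute with $s_{p,q}$ by the mock-commutation relation and whose $\Sigma$-images swap the two sides of the complement of $[p,q]$; together with your two subgroups these do generate the full setwise stabiliser. Even then, the stabiliser computation alone does not yield the bijection: you also need that the hyperplanes dual to $(1,s_{p,q})$ and $(1,s_{p',q'})$ with $q-p=q'-p'$ lie in one $J_n$-orbit (they are exchanged by a reversal $s_{C,D}$ for a suitable interval $[C,D]$ containing both, chosen so that the reflection carries $[p,q]$ to $[p',q']$), followed by a short transporter argument; ``surjectivity of $\Sigma$'' does not supply this by itself. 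By contrast, the last step of your proposal is unproblematic, and even simpler than you make it: for any induced subgraph of $\Gamma$ the corresponding right-angled Coxeter group sits canonically inside $C(\Gamma)$, so the observation about singletons and the empty set is not needed. In summary, the conclusion is correct and your plan could be carried out, but as written the key orbit-identification step would fail, and the paper's colouring criterion is exactly the device that lets it avoid this work.
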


\noindent
We refer to Section~\ref{section:Pure} for an explicit description of the embedding. Interestingly, it seems that this embedding coincides with the embedding constructed in \cite{MR3988817}. 

\medskip \noindent
The fact that pure cactus groups act on median graphs with trivial cube-stabilisers also immediately implies that:

\begin{cor}
Pure cactus groups are torsion-free.
\end{cor}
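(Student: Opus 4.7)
The plan is to combine the immediately preceding result, namely that $PJ_n$ acts on the median graph $\mathscr{C}_n$ with trivial cube-stabilisers, with the classical fact that every finite group acting on a median graph (equivalently, on a CAT(0) cube complex) must stabilise some cube setwise.

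Concretely, let $g \in PJ_n$ be an element of finite order. The cyclic subgroup $\langle g \rangle$ is then finite, so every orbit $\langle g \rangle \cdot v$ in $\mathscr{C}_n$ is bounded. Passing to the CAT(0) geometric realisation and applying the Bruhat--Tits fixed point theorem (a finite group acting by isometries on a complete CAT(0) space has a fixed point, obtained as the circumcentre of any bounded orbit) produces a point $x \in \mathscr{C}_n$ fixed by $\langle g \rangle$. This point lies in the interior of a unique open cube $C$. Because $g$ acts cellularly, it permutes the open cubes of $\mathscr{C}_n$; since it fixes $x$, it must preserve the unique open cube containing $x$, that is, $g \cdot C = C$ setwise.

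At this stage the hypothesis of trivial cube-stabilisers closes the argument at once: the stabiliser in $PJ_n$ of the cube $C$ is trivial, so $g = 1$. Hence $PJ_n$ has no non-trivial element of finite order.

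The only step requiring any care is the appeal to the ``finite-groups-fix-a-cube'' principle; this is entirely standard for CAT(0) cube complexes and can alternatively be phrased purely in median terms (a finite set in a median graph has a well-defined median-centered cube preserved by every automorphism permuting the set), but invoking Bruhat--Tits seems the cleanest presentation. There is no real obstacle beyond recording these well-known facts.
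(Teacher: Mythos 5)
Your argument is correct and is essentially the paper's own proof: the paper likewise combines the fact that a finite-order isometry of a median graph must stabilise a cube with the trivial cube-stabilisers statement of Theorem~\ref{thm:conspicial}. Your spelling out of the fixed-cube principle via the CAT(0) realisation and the Bruhat--Tits fixed point theorem is just a more detailed version of the same step.
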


\noindent
A combinatorial proof of this statement can be found in \cite{Paolo}.

\paragraph{Hyperbolicity.} The fact that cactus groups act properly and cocompactly on median graphs motivates the idea that these groups are nonpositively curved. But is it possible to find an action on negatively curved space? In other words, do cactus groups naturally act on (Gromov-)hyperbolic spaces? Recall that a geodesic metric spaces (e.g.\ a graph) is \emph{hyperbolic} if there exists some $\delta \geq 0$ such that, for every geodesic triangle, any side is contained in the $\delta$-neighbourhood of the union of the two other sides. 

\medskip \noindent
It is well-known in geometric group theory that a lot of valuable information can be extracted from a good action on a hyperbolic space, leading to various notions of hyperbolicity in groups. The strongest notion is given by \emph{hyperbolic groups}, which are finitely generated groups whose Cayley graphs (constructed from finite generating sets) are themselves hyperbolic. However, most cactus groups are not hyperbolic. Same thing for the weaker notions of relative hyperbolicity and cyclic hyperbolicity. See Proposition~\ref{prop:NotHyp} below. Nevertheless, it turns out that most cactus groups are \emph{acylindrically hyperbolic}.

\begin{thm}
For every $n \geq 4$, the cactus group $J_n$ is acylindrically hyperbolic.
\end{thm}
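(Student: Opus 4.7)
The plan is to leverage the proper cocompact action of $J_n$ on the median graph $\mathscr{C}_n$ provided by Theorem~\ref{thm:IntroMedian} and to invoke the following now-standard criterion for acylindrical hyperbolicity in the cubulated setting: a group acting properly and cocompactly on a CAT(0) cube complex is either virtually cyclic or acylindrically hyperbolic, provided it contains a contracting (rank-one) isometry. This criterion, due in various forms to Caprace--Sageev and sharpened in later work on cube-complex groups, reduces the theorem to producing, for each $n \geq 4$, an explicit contracting element of $J_n$ acting on $\mathscr{C}_n$.

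I would propose the candidate $g := s_{1,3}\, s_{2,4}$. The intervals $[1,3]$ and $[2,4]$ are neither disjoint nor nested, so none of the defining relations applies between $s_{1,3}$ and $s_{2,4}$; a direct combinatorial argument using the median structure of $\mathscr{C}_n$ should then show that the alternating word $(s_{1,3}\, s_{2,4})^k$ is geodesic in $\mathscr{C}_n$ for every $k \geq 1$, exhibiting a bi-infinite combinatorial axis and proving in particular that $g$ has infinite order. To promote $g$ to a contracting isometry, I would verify that its axis does not bound a combinatorial half-flat in $\mathscr{C}_n$. Using the explicit description of the hyperplanes of $\mathscr{C}_n$ developed earlier in the paper, this reduces to inspecting which intervals $[p,q] \subset \{1, \ldots, n\}$ are simultaneously compatible --- in the disjoint-or-nested sense dictated by the relations --- with both $[1,3]$ and $[2,4]$, and checking that no such interval yields a coherent infinite family of parallel hyperplanes transverse to the axis.

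Once the contracting property of $g$ is established, acylindrical hyperbolicity is automatic from the cited criterion, since $J_n$ is patently not virtually cyclic for $n \geq 4$: for instance, standard ping-pong on $\mathscr{C}_n$ applied to translates of $g$ readily produces non-abelian free subgroups. The main obstacle, and the only genuine work in this strategy, lies in verifying the contracting property. One must uniformly control the hyperplanes transverse to the axis of $g$ and rule out any parallel pair of such hyperplanes crossing the axis infinitely often. I expect the source of any subtlety to be the mock-commutation relation --- precisely what distinguishes cactus groups from right-angled Coxeter groups --- since it could a priori create unexpected hyperplane parallelisms along the axis; ruling out such parallelisms by a combinatorial analysis of intervals is the heart of the argument.
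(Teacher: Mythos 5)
Your high-level strategy is the same as the paper's: produce a contracting isometry of $\mathscr{C}_n$ and invoke the cubical criterion (Proposition~\ref{prop:AcylHyp}) together with the observation that $J_n$ is not virtually cyclic. The problem is your candidate element. Since $g=s_{1,3}s_{2,4}$ has support $\{1,2,3,4\}$, every generator $s_{p,q}$ with $[p,q]\subset[5,n]$ genuinely commutes with $g$. For $n\geq 7$ the subgroup $\langle s_{5,6},s_{6,7}\rangle$ is infinite dihedral and centralises $g$, so $\langle g\rangle\times\langle s_{5,6}s_{6,7}\rangle\cong\mathbb{Z}^2$; abelian subgroups of groups acting properly and cocompactly on median graphs are undistorted, so the axis of $g$ lies in a quasi-flat and $g$ cannot be contracting. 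In hyperplane terms: by Lemma~\ref{lem:LabelsHyp} and Lemma~\ref{lem:HypTransverseLabel}, every hyperplane dual to an edge $(x,xs_{5,6})$ with $x$ on the axis is transverse to every hyperplane crossing the axis (the labels are disjoint), and translating by powers of $s_{5,6}s_{6,7}$ yields infinitely many hyperplanes transverse to any chosen pair of axis hyperplanes. So the "no coherent infinite family of parallel hyperplanes transverse to the axis" step, which you correctly identify as the heart of the argument, is in fact false for your element once $n\geq 7$ (and is already delicate for $n=5,6$); the proposal therefore cannot prove the statement for all $n\geq 4$.

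The repair is exactly where the paper's choice matters: one must take an element whose axis crosses hyperplanes whose labels cover all the strands. The paper uses the word $s_{1,2}s_{2,3}\cdots s_{n-1,n}\cdot s_{n-2,n-1}\cdots s_{2,3}$; no reduction or mock commutation applies along its powers, so the corresponding line $\gamma$ is a convex geodesic axis, and Lemma~\ref{lem:HypTransverseLabel} forces any hyperplane transverse to both $H$ (dual to $(1,s_{1,2})$) and $gH$ to be labelled by the full interval $[1,n]$. Even then one extra step is needed: working in $\mathscr{C}_n^{[2,n-1]}$, where no such hyperplane exists, and using that every vertex of $\mathscr{C}_n$ is at distance at most $1$ from $\mathscr{C}_n^{[2,n-1]}$ (because $s_{1,n}$ mock-commutes with every generator), one concludes that $H$ and $gH$ share only finitely many common transverse hyperplanes, so $g$ is contracting. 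Your argument that $J_n$ is not virtually cyclic is fine (the paper uses the subgroup $J_4^{\{2\}}$, a free product of $\mathbb{Z}/2\mathbb{Z}$ with the infinite dihedral group), but without a correct contracting element the proof does not go through.
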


\noindent
Recall that a group $G$ is \emph{acylindrically hyperbolic} if it admits an action on some hyperbolic space $X$ which is \emph{non-elementary} (i.e.\ with an infinite limit set) and \emph{acylindrical} (i.e.\ for every $D \geq 0$, there exist $L,N \geq 0$ such that $\# \{ g \in G \mid d(x,gx),d(y,gy) \leq D\} \leq N$ for all $x,y \in X$ satisfying $d(x,y) \geq L$). We refer to the survey \cite{MR3966794} for more information on acylindrically hyperbolic groups.

\medskip \noindent
Knowing that a group is acylindrically hyperbolic automatically implies various properties, including:

\begin{cor}
Fix an $n \geq 4$ and let $G$ be either the cactus group $J_n$ or the pure cactus group $PJ_n$. The following assertions hold.
\begin{itemize}[itemsep=0pt]
	\item $G$ is \emph{SQ-universal} (i.e.\ every countable group embeds into a quotient of $G$).
	\item $G$ contains uncountably many normal free subgroups.
	\item $G$ satisfy the \emph{Property $P_{\text{naive}}$} (i.e.\ for all $h_1, \ldots, h_r \in G$, there exists $g \in G$ such that $\langle h_i,g \rangle = \langle h_i \rangle \ast \langle g \rangle$ for every $1 \leq i \leq r$).
	\item Random subgroups in $G$ are free (see \cite{MR4023758} for a precise statement).
	\item The space of quasimorphisms of $G$ is infinite-dimensional. As a consequence, the the second bounded cohomology group of $G$ is infinite-dimensional.
	\item $G$ does not contain any infinite abelian $s$-normal subgroup. (Recall that an subgroup $H$ is $s$-normal if $H \cap gHg^{-1}$ is infinite for every $g \in G$.)
	\item $G$ does not decompose as a product of two infinite groups. 
	\item Asymptotic cones of $G$ contain cut points. 
\end{itemize}
\end{cor}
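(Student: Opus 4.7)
The plan is to deduce every bullet from the preceding theorem that $J_n$ is acylindrically hyperbolic for $n\geq 4$, by invoking general machinery available for this class of groups. The first step is to dispatch the case of the pure subgroup: since $PJ_n$ is the kernel of $\Sigma : J_n \to S_n$, it has finite index in $J_n$; and acylindrical hyperbolicity is a commensurability invariant among groups that are not virtually cyclic (this is folklore, and is spelled out, for instance, in Osin's survey \cite{MR3966794}). So once $J_n$ is acylindrically hyperbolic and not virtually cyclic, so is $PJ_n$, and the two cases can be treated uniformly.

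Once $G \in \{J_n, PJ_n\}$ is known to be acylindrically hyperbolic, each item on the list is a direct application of a result from the literature. Specifically, SQ-universality and the existence of uncountably many normal free subgroups are due to Dahmani--Guirardel--Osin and Osin (see \cite{MR3966794}); Property $P_{\text{naive}}$ for acylindrically hyperbolic groups is established by Abbott--Dahmani; the statement about random subgroups is exactly the content of \cite{MR4023758}; infinite-dimensionality of the space of (homogeneous) quasimorphisms, and hence of $H^2_b(G;\mathbb{R})$, follows from Hull--Osin (building on Bestvina--Fujiwara); the non-existence of an infinite abelian $s$-normal subgroup follows from the fact that a loxodromic element must have virtually cyclic centraliser and from the characterisation of acylindrical hyperbolicity via elementary subgroups. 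The two remaining items (no product decomposition into two infinite groups, and the existence of cut points in every asymptotic cone) are respectively a consequence of the previous bullet together with the fact that an acylindrically hyperbolic group has trivial amenable radical, and of a theorem of Sisto on asymptotic cones of acylindrically hyperbolic groups.

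Thus the body of the proof reduces to one short paragraph: invoke commensurability to pass from $J_n$ to $PJ_n$, then list the appropriate references. The only genuine checks are that $J_n$ (and hence $PJ_n$) is not virtually cyclic and that the actions provided by the previous theorem give rise to a genuinely loxodromic element, which is immediate from the setup since $J_n$ contains obvious free abelian subgroups of rank at least two produced by disjoint generators. The main obstacle, if any, is purely bookkeeping: tracking down the cleanest reference for each bullet in the acylindrically hyperbolic groups literature; the geometric content is already entirely contained in the preceding theorem.
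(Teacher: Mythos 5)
Your overall route is the one the paper itself (implicitly) takes: everything on the list is quoted from the acylindrically hyperbolic literature once Theorem~\ref{thm:AcylHyp} is in place, and the passage to $PJ_n$ via finite index is legitimate (restricting an acylindrical non-elementary action to a finite-index subgroup keeps it acylindrical with infinite limit set; alternatively, note that a power of the contracting element built in the proof of Theorem~\ref{thm:AcylHyp} lies in $PJ_n$, so Proposition~\ref{prop:AcylHyp} applies to $PJ_n$ directly). Two points need attention, one of which is a genuine gap.

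The gap concerns Property $P_{\text{naive}}$: the Abbott--Dahmani theorem applies to acylindrically hyperbolic groups \emph{with trivial finite radical}, and this hypothesis is not free here. Indeed, if $N \lhd G$ is a nontrivial finite normal subgroup and $h_1 \in N \setminus \{1\}$, then for any $g$ the conjugates $g^k h_1 g^{-k}$ lie in the finite set $N$ and hence repeat, so $\langle h_1, g\rangle$ can never be $\langle h_1\rangle \ast \langle g\rangle$; thus $P_{\text{naive}}$ genuinely fails in the presence of a finite radical. Since $J_n$ is generated by involutions and is far from torsion-free, you must prove that every finite normal subgroup of $J_n$ is trivial; this is exactly Corollary~\ref{cor:NoFiniteNormal} of the paper, which requires a separate argument (Roller boundary / strongly separated hyperplanes), and is why that corollary appears at all. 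For $PJ_n$ the hypothesis is immediate from torsion-freeness (Corollary~\ref{cor:PureTorsionFree}). Separately, your closing justification that $J_n$ is not virtually cyclic because "disjoint generators produce free abelian subgroups of rank at least two" is false as stated: disjoint generators are commuting involutions, so they only give a Klein four-group, and in fact $J_4$ contains no $\mathbb{Z}^2$ at all since $PJ_4$ is a hyperbolic surface group (Corollary~\ref{cor:Surface}). This does not damage the argument, because non-elementarity is already part of the conclusion of Theorem~\ref{thm:AcylHyp} (the paper checks it via the subgroup $J_4^{\{2\}} \cong \mathbb{Z}/2\mathbb{Z} \ast D_\infty$), but the remark should be corrected or deleted.
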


\noindent
The sixth item implies that most (pure) cactus groups have finite centres. In fact, from there it is not difficult to upgrade this observation as:

\begin{cor}
For all $n \geq 3$ and $m \geq 4$, $PJ_m$ and $J_n$ have trivial centres.
\end{cor}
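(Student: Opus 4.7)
The plan is to split the statement into three cases: $PJ_m$ for $m \geq 4$, $J_n$ for $n \geq 4$, and the remaining case $J_3$. The first two will follow immediately from the previous corollary on acylindrical hyperbolicity, and the third from an explicit computation of the presentation of $J_3$.

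For $m \geq 4$, the pure cactus group $PJ_m$ is torsion-free (by the earlier corollary extracted from the conspicial action with trivial cube-stabilisers) and acylindrically hyperbolic. Suppose $z \in Z(PJ_m)$ is nontrivial. Then torsion-freeness forces $z$ to have infinite order, so $\langle z \rangle$ is infinite cyclic, hence abelian. Being central, $\langle z \rangle$ is normal in $PJ_m$, and any normal subgroup is trivially $s$-normal. This contradicts the sixth item of the previous corollary, which forbids infinite abelian $s$-normal subgroups. Hence $Z(PJ_m) = 1$.

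For $n \geq 4$, the key preliminary observation is that $Z(J_n) \subseteq PJ_n$: any $z \in Z(J_n)$ satisfies $\Sigma(z) \in Z(S_n)$, and $Z(S_n)$ is trivial for $n \geq 3$. The same argument as in the previous paragraph, now applied to the acylindrically hyperbolic group $J_n$, rules out infinite-order central elements, while torsion-freeness of $PJ_n$ rules out finite-order ones.

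It remains to treat $J_3$, where the cited acylindrical hyperbolicity is not available. Here I would directly work out the presentation. The mock commutation with $(p,q,m,r) = (1,3,1,2)$ gives $s_{2,3} = s_{1,3} s_{1,2} s_{1,3}$, allowing elimination of $s_{2,3}$ from the generating set; the mock commutation with $(1,3,2,3)$ then reduces to a tautology after this substitution, so the only surviving relations are $s_{1,2}^2 = s_{1,3}^2 = 1$. Thus $J_3 \cong \mathbb{Z}/2 \ast \mathbb{Z}/2$ is the infinite dihedral group, whose centre is trivial by a standard normal-form argument. No real obstacle is expected: the bulk of the work is already done by the acylindrical hyperbolicity corollary, and the only mildly delicate bookkeeping concerns verifying that no further relation survives the substitution in $J_3$.
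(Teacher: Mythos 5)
Your proposal is correct and follows essentially the same route as the paper: handle $J_3$ separately as the infinite dihedral group, use acylindrical hyperbolicity together with torsion-freeness of the pure cactus group to kill the centre for $m\geq 4$, and reduce $Z(J_n)$ to $PJ_n$ via $\Sigma$ and the triviality of $Z(S_n)$. Your use of the ``no infinite abelian $s$-normal subgroup'' item is just an unpacking of the paper's ``acylindrically hyperbolic implies finite centre'' step, and your explicit verification that $J_3\cong\mathbb{Z}/2\ast\mathbb{Z}/2$ simply fills in a fact the paper asserts without proof.
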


\noindent
A combinatorial proof of this statement is also available in \cite{Paolo}. In fact, we are able to show more generally that every finite normal subgroup in a cactus group is necessarily trivial. See Corollary~\ref{cor:NoFiniteNormal}. 

\paragraph{Acknowledgements.} I am grateful to T. Haettel for his comments on a preliminary version of the present article.

\section{Median geometry}\label{section:MedianGeometry}

\noindent
In this section, we prove that the canonical Cayley graphs of cactus groups are median. We start by recalling basic definitions and results related to median graphs in Section~\ref{section:Crash}. In Section~\ref{section:Normal} we introduce a normal on interval diagrams, which we use in Section~\ref{section:CactusMedian} in order to prove our main result.

\subsection{A crash course on median graphs}\label{section:Crash}

\noindent
A connected graph $X$ is \emph{median} if, for all vertices $x_1,x_2,x_3 \in X$, there exists a unique vertex $m \in X$ satisfying
$$d(x_i,x_j)=d(x_i,m)+d(m,x_j) \text{ for all } i \neq j.$$
We refer to this vertex $m$ as the \emph{median point} of $x_1,x_2,x_3$. 
\begin{figure}[h!]
\begin{center}
\includegraphics[scale=0.4]{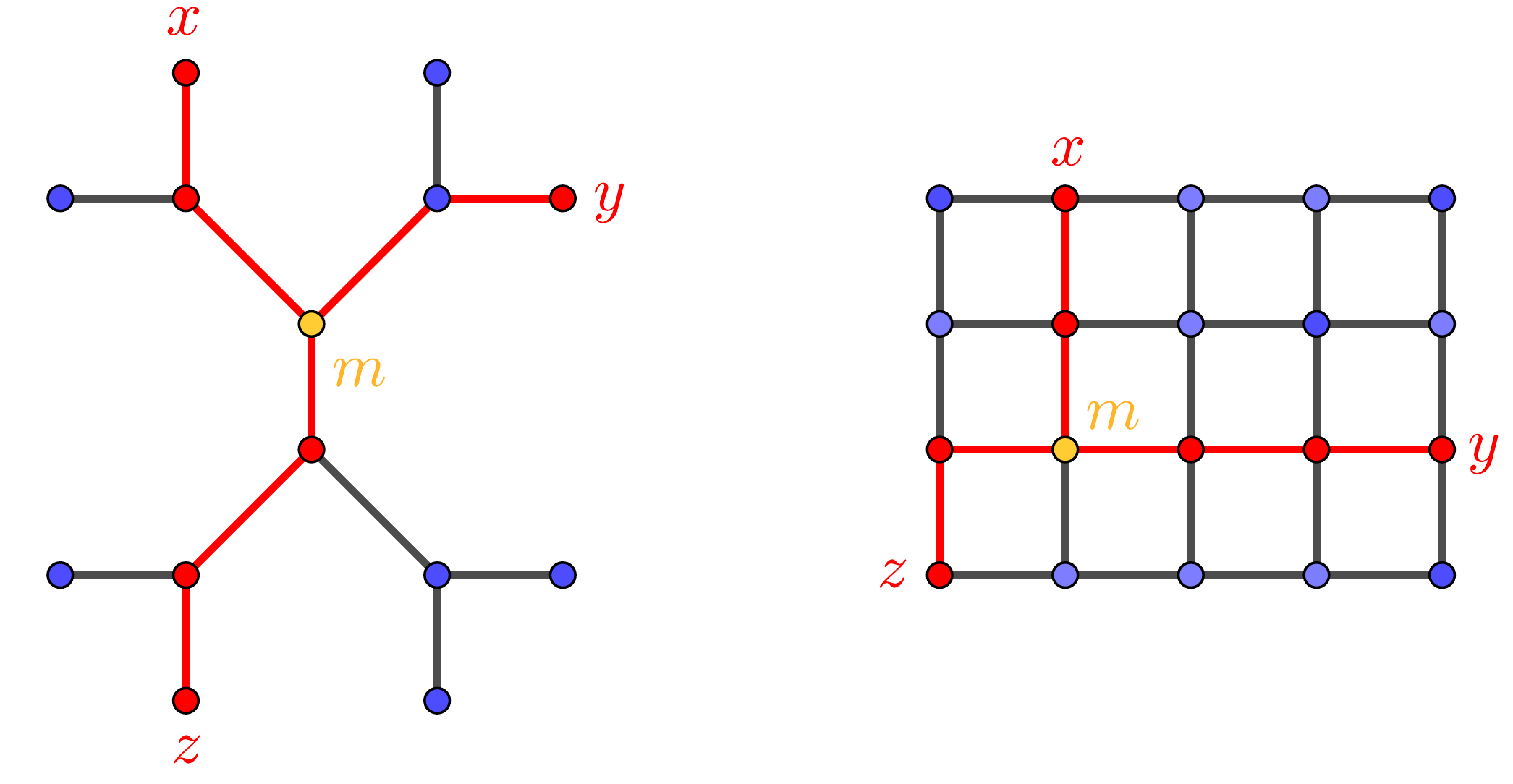}
\caption{Examples of median graphs and median points.}
\label{ExMedian}
\end{center}
\end{figure}

\medskip \noindent
Examples include of course simplicial trees, the median point of a triple of vertices corresponding to the centre of the tripod they delimit. A product of median graphs is still median, so product of trees are also median graphs. This includes in particular (one-skeleta of) cubes of arbitrary dimensions. 

\medskip \noindent
A fundamental idea is that the geometry of a median graph is essentially encoded in the combinatorics of its \emph{hyperplanes}. 

\begin{definition}
Let $X$ be a median graph. A(n \emph{oriented}) \emph{hyperplane} $J$ is an equivalence class of (oriented) edges with respect to the transitive closure of the relation that identifies two (oriented) edges when they are opposite sides of a $4$-cycle. 
\begin{itemize}
	\item If $X \backslash \backslash J$ denotes the graph obtained from $X$ by removing all the edges of $J$, then a connected component of $X \backslash \backslash J$ is a \emph{halfspace}. Two subsets $A,B \subset X$ are \emph{separated} by $J$ if they lie in distinct halfspaces delimited by $J$.
	\item The subgraph $N(J)$ spanned by all the edges of a hyperplane $J$ is its \emph{carrier}. The connected components of $N(J) \backslash \backslash J$ are the \emph{fibres} of the hyperplane. 
	\item Two hyperplanes $J_1$ and $J_2$ are \emph{transverse} if there exist two intersecting edges $e_1 \subset J_1$ and $e_2 \subset J_2$ that span a $4$-cycle. 
	\item They are \emph{tangent} if exist two intersecting edges $e_1 \subset J_1$ and $e_2 \subset J_2$ that do not span a $4$-cycle.
\end{itemize}
See Figure~\ref{Hyp} for a few examples.
\end{definition}
\begin{figure}[h!]
\begin{center}
\includegraphics[scale=0.4]{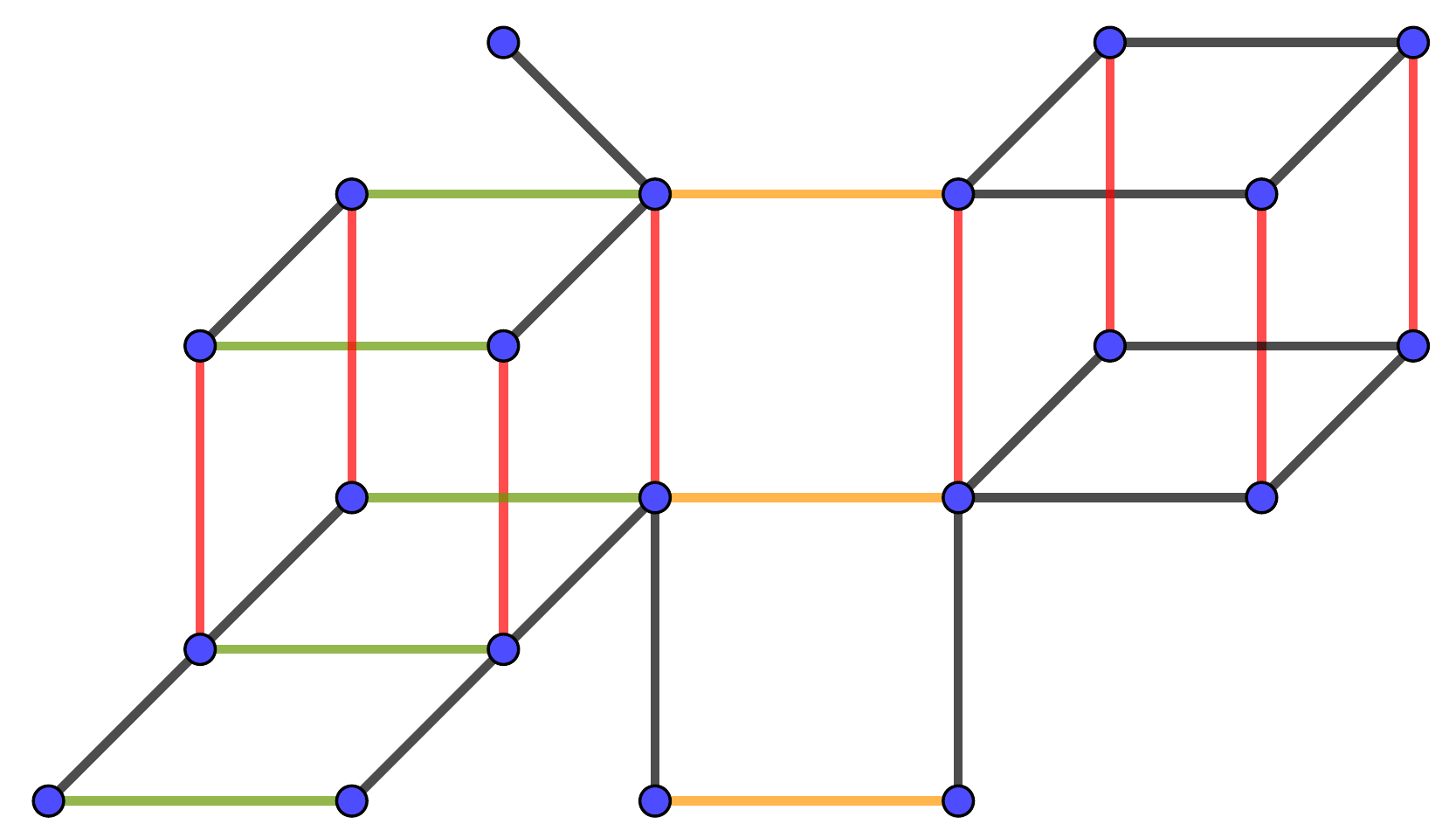}
\caption{Hyperplanes in a median graph. The red hyperplane is transverse to the green and yellow hyperplanes. Green and yellow hyperplanes are tangent.}
\label{Hyp}
\end{center}
\end{figure}

\noindent
The previous claim is mainly motivated by the following statement:

\begin{thm}[\cite{MR1347406}]\label{thm:MedianBig}
Let $X$ be a median graph. The following assertions hold.
\begin{itemize}
	\item Every hyperplane $J$ separates. More precisely, $X \backslash \backslash J$ has exactly two connected components.
	\item Halfspaces, carriers, and fibres are convex.
	\item A path in $X$ is a geodesic if and only if it crosses each hyperplane at most once.
	\item The distance between two vertices $x,y \in X$ coincides with the number of hyperplanes separating $x,y$. 
\end{itemize}
\end{thm}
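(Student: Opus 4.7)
The plan is to identify each hyperplane with a partition of the vertex set coming from the distance function, and then read off all four properties from this correspondence.

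First, I would verify two preliminary facts about median graphs. \emph{Bipartiteness}: any odd cycle would provide a triple of vertices with two distinct median points, contradicting uniqueness. The \emph{quadrangle condition}: given an edge $[u,v]$ and a third vertex $t$ with $d(v,t) = d(u,t) - 1$, any neighbour $u'$ of $u$ satisfying $d(u',t) = d(u,t) - 1$ admits a neighbour $v'$ with $d(v',t) = d(v,t) - 1$, and $u,v,v',u'$ span a $4$-cycle. Both facts follow from well-chosen applications of median existence and uniqueness.

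Given an edge $e = [u,v]$, introduce the distance-based halfspaces $H^{+}(e) = \{x : d(x,u) < d(x,v)\}$ and $H^{-}(e) = \{x : d(x,v) < d(x,u)\}$. Bipartiteness ensures that $\{H^{+}(e), H^{-}(e)\}$ is a partition of $X$ into two nonempty subsets. A direct median computation shows that $H^{\pm}(e) = H^{\pm}(e')$ whenever $e, e'$ are opposite sides of a $4$-cycle, so the partition depends only on the hyperplane $J$ containing $e$, and the edges of $J$ are precisely those joining $H^{+}(e)$ to $H^{-}(e)$. This already yields the first item.

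For convexity of halfspaces, if $x,y \in H^{+}(e)$ and $z$ lies on a geodesic from $x$ to $y$, a short argument with the median $m(x,y,u)$ shows that $z \in H^{+}(e)$; carriers and fibres are intersections of halfspaces, so they are convex too. For the third item, suppose a path $\gamma$ crossed a single hyperplane $J$ twice via edges $e_1, e_2$: iterated application of the quadrangle condition would slide $e_2$ next to $e_1$ along $\gamma$, after which the two crossings become opposite sides of a $4$-cycle and can be cancelled to produce a strictly shorter path with the same endpoints; conversely, any path crossing each hyperplane at most once has length equal to the number of hyperplanes it crosses, which coincides with the lower bound given by the number of hyperplanes separating its endpoints, so such a path is automatically a geodesic. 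The fourth item drops out from the same observation.

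The main technical obstacle is the quadrangle condition: once it is in hand, the four items all reduce to routine bookkeeping, but establishing it requires picking exactly the right triple of vertices for which to apply median uniqueness, and this is the usual pitfall.
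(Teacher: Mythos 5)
The paper itself gives no proof of Theorem~\ref{thm:MedianBig}: it is quoted from \cite{MR1347406}, so there is no internal argument to compare with. Your route --- bipartiteness, the quadrangle condition, and the distance-defined halfspaces $H^{\pm}(e)$ --- is the classical Djokovi\'c-style approach and is the right one; in particular your parity count of crossings is exactly how items three and four are usually deduced. But two load-bearing steps are asserted where they need proofs. First, after checking $H^{\pm}(e)=H^{\pm}(e')$ for opposite sides of a $4$-cycle, you conclude that the edges of $J$ are \emph{precisely} the edges joining $H^{+}(e)$ to $H^{-}(e)$. The square-invariance gives only one inclusion (every edge of $J$ joins the two sides); the converse --- every edge with one endpoint in each of $H^{+}(e)$, $H^{-}(e)$ already lies in $J$ --- is a separate induction (on the distance to $e$, using the quadrangle condition to build a square moving such an edge towards $e$). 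This converse is the crux: it is what makes $X\backslash\backslash J$ have exactly two components, and it is what legitimises the statement ``edges not in $J$ do not change side'', on which your counting argument for items three and four entirely rests. Without it, neither the first item nor the lower bound $d(x,y)\geq\#\{\text{separating hyperplanes}\}$ is established.

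Second, for item two you justify convexity of carriers and fibres by saying they are intersections of halfspaces. As stated this is circular: in a median graph, presenting a subgraph as an intersection of halfspaces is normally a \emph{consequence} of its convexity, and neither the carrier nor a fibre comes with an evident description of that form. Convexity of fibres needs its own ladder-type argument: given two vertices lying on edges of $J$ on the same side, transport an edge of $J$ along a geodesic between them via the quadrangle condition, showing each intermediate vertex also spans an edge of $J$. Two smaller points: bipartiteness follows because, for an edge $[u,v]$ and a vertex $t$ with $d(u,t)=d(v,t)$, the median of $u,v,t$ fails to \emph{exist} (it would have to be $u$ or $v$, both impossible), not because it fails to be unique; and your sliding argument for the forward half of item three should be run on a geodesic (or replaced by the halfspace argument: two consecutive crossings in opposite directions would force the terminal vertex into both halfspaces), since the quadrangle condition is a statement about distances to a fixed vertex and does not directly let you slide edges along an arbitrary path.
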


\noindent
In geometric group theory, median graphs are better known as \emph{CAT(0) cube complexes}, but it turns out that CAT(0) cube complexes and median graphs define essentially the same objects. More precisely:

\begin{thm}[\cite{MR1663779, mediangraphs, Roller}]\label{thm:MedianVScube}
A graph is median if and only if its cube-completion is CAT(0). 
\end{thm}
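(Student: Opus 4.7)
The plan is to reduce the statement to Gromov's link condition, which characterises CAT(0) cube complexes as those cube complexes that are simply connected and whose every vertex link is a flag simplicial complex. Modulo this classical result, it suffices to prove the equivalence
\begin{center}
$X$ is median $\iff$ $X^\square$ is simply connected with flag vertex links,
\end{center}
where $X^\square$ denotes the cube-completion of $X$.

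For the forward direction, assume $X$ is median. To verify flagness of the link at a vertex $v$, I would take three edges $[v,v_i]$ ($i=1,2,3$) which pairwise bound $4$-cycles with respective fourth vertices $u_{12},u_{13},u_{23}$ and set $u:=m(u_{12},u_{13},u_{23})$ using the median operation supplied by the hypothesis. Using the hyperplane characterisation of distance in Theorem~\ref{thm:MedianBig} one then checks that $u$ is adjacent to each $u_{ij}$ and that the eight vertices $\{v, v_i, u_{ij}, u\}$ span a $3$-cube, which fills in the required triangle in the link of $v$. For simple connectedness, I would use the gate projections of a median graph onto its convex subcomplexes (applied, say, to a decreasing family of balls around a basepoint) to exhibit $X^\square$ as a union of contractible subcomplexes, from which contractibility of $X^\square$ follows.

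For the backward direction, assume $X^\square$ is simply connected with flag links and let $x_1,x_2,x_3$ be three vertices of $X$. The strategy is to develop the hyperplane theory of $X^\square$ \emph{ab initio} under these weaker hypotheses: show that equivalence classes of parallel edges give embedded, two-sided, separating subcomplexes, that geodesics in the $1$-skeleton cross each hyperplane at most once, and that graph distance equals the number of separating hyperplanes. One then defines $m$ as the endpoint of any edge-path starting from $x_1$ which crosses exactly the hyperplanes separating $x_1$ from both $x_2$ and $x_3$. Counting separating hyperplanes gives the betweenness identities $d(x_i,x_j)=d(x_i,m)+d(m,x_j)$, and uniqueness of $m$ follows from the same distance formula applied to any candidate median.

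The main obstacle is entirely in the backward direction. One cannot appeal to Theorem~\ref{thm:MedianBig}, whose conclusions presuppose the median property that is being derived; instead the hyperplane combinatorics must be reconstructed from the flag plus simply connected hypotheses alone. The technical heart of this step is a disk-diagram argument: the flag link condition is what rules out hyperplane self-intersections and self-osculations on a disk, while simple connectedness is what promotes this local information into the global statement that each hyperplane separates $X^\square$ into exactly two components. Once this separation and the distance-by-hyperplane-count formula are in hand, the median construction above is immediate; reaching that point is where the bulk of the proof lies.
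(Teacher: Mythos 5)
The first thing to note is that the paper does not actually prove Theorem~\ref{thm:MedianVScube}: it is quoted from the literature, and the closest in-paper analogue of what you are attempting is the proof of the criterion in Theorem~\ref{thm:CriterionCC}. Your plan --- reduce to Gromov's link condition, and for the converse rebuild the hyperplane combinatorics from simple connectedness plus flagness via disc diagrams, then produce the median by counting separating hyperplanes --- is the standard route (Chepoi, Sageev, Roller) and is consistent in spirit with how the paper argues in Section~\ref{section:Crash}. So the strategy is sound; the problems are in three specific steps.

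In the forward direction, flagness is not the condition that $3$-cliques in the link are filled: you must show, for every $k$, that $k$ edges at a vertex which pairwise span $4$-cycles span a $k$-cube, so your median argument has to be organised as an induction on $k$ (your construction is the case $k=3$; the idea does extend, but as written the verification of the link condition is incomplete). Your contractibility argument also leans on balls being convex, which already fails in the $3$-cube (the radius-$1$ ball around a vertex is not convex), and a union over a \emph{decreasing} family cannot exhaust anything; you should instead use an increasing exhaustion by convex subgraphs (intervals, or convex hulls of balls), or simply show that every cycle bounds a disc of squares, since only simple connectedness is needed for Gromov's criterion. In the backward direction, the genuine gap is the claim that once separation and the distance formula are available ``the median construction above is immediate'': the existence of a vertex reached from $x_1$ by crossing exactly the hyperplanes separating $x_1$ from both $x_2$ and $x_3$ does not follow formally from those two facts --- it requires convexity of halfspaces and a Helly-type argument that pairwise-intersecting halfspaces share a vertex (or an equivalent explicit construction), which is exactly where the paper's proof of Theorem~\ref{thm:CriterionCC} concentrates its effort. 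Uniqueness, as you say, is the easy half.
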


\noindent
Here, given a graph, its \emph{cube-completion} refers to the cube complex obtained by filling with cubes all the subgraphs isomorphic to one-skeleta of cubes. By abuse of language, we refer to a \emph{cube} in a graph as an induced subgraph isomorphic to a product of edges.

\medskip \noindent
In order to prove that Cayley graphs of cactus groups are median, we will use the following criterion:

\begin{thm}\label{thm:CriterionCC}
Let $X$ be a cube complex. Assume that the following conditions hold:
\begin{itemize}
	\item[(i)] $X$ is simply connected;
	\item[(ii)] the squares in $X$ are embedded;
	\item[(iii)] two distinct squares never share two consecutive edges;
	\item[(iv)] a cycle of three squares spans the two-skeleton of a $3$-cube.
\end{itemize}
Then the one-skeleton $X^{(1)}$ of $X$ is a median graph. Moreover, every $4$-cycle in $X^{(1)}$ bounds a square in $X$. 
\end{thm}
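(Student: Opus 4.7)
My plan is to apply Theorem~\ref{thm:MedianVScube}, reducing the task to showing that the cube-completion $\hat{X}$ of $X^{(1)}$ is CAT(0), and then to invoke Gromov's link criterion: a simply connected cube complex is CAT(0) if and only if the link of every vertex is a flag simplicial complex. Simple connectedness of $\hat{X}$ is inherited from condition (i), since $\hat{X}$ is built from $X$ by attaching additional $2$- and higher-dimensional cells, none of which affects the fundamental group. The crux will therefore be to verify the link condition in $\hat{X}$, and then to relate cubes of $\hat{X}$ back to cubes of $X$ in order to conclude that every $4$-cycle in $X^{(1)}$ already bounds a square in $X$.

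The technical heart of the argument is the $4$-cycle lifting claim: every $4$-cycle $\gamma$ in $X^{(1)}$ bounds a square in $X$. I would establish this by a disc diagram reduction. Since $X$ is simply connected, $\gamma$ bounds a combinatorial disc diagram $D \to X$; choose such a $D$ of minimal area. Assuming for contradiction that $D$ contains more than one square, I would analyse the interior of $D$ via two local moves. Condition (iii) prevents two adjacent squares of $D$ from projecting to a common square of $X$ along two shared consecutive edges, so any such bigon configuration can be cancelled, strictly reducing area. Condition (iv) provides a \emph{cube move}: whenever three squares of $D$ meet at an interior vertex in a corner configuration, they project to a cycle of three squares in $X$, which by (iv) spans the two-skeleton of a $3$-cube; the corner may therefore be flipped across this cube to the opposite three squares, and after a cancellation enabled by (iii) the area of $D$ drops. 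Iterating these moves with respect to a suitable complexity, such as the lexicographic pair of area and number of interior vertices, one forces $D$ to collapse to a single square, contradicting the hypothesis.

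Once this lifting is established, the $2$-skeleton of $\hat{X}$ coincides with that of $X$, and the link of a vertex $v \in \hat{X}$ has as vertices the edges at $v$, with two such vertices connected whenever the edges span a square of $X$. Condition (iii) together with the $4$-cycle lifting then forbids bigons in the link, while condition (iv) ensures that every triangle in the link is filled by the $3$-cube added in the cube-completion, yielding the flag property. Gromov's criterion gives that $\hat{X}$ is CAT(0), and Theorem~\ref{thm:MedianVScube} concludes that $X^{(1)}$ is median; the moreover statement is exactly the $4$-cycle lifting already proved. The main obstacle I anticipate is precisely the disc diagram reduction: setting up a complexity that strictly decreases under both the cancellation and the cube moves, while verifying that these moves do not introduce new obstructions, requires the classical Sageev-style machinery of minimal disc diagrams in cube complexes and needs to be carried out with some care.
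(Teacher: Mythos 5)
Your reduction to CAT(0)-ness of the cube-completion via Theorem~\ref{thm:MedianVScube} and Gromov's link criterion is a genuinely different route from the paper's (which uses disc diagrams only to establish the basic hyperplane properties and then checks the median property directly, intersecting halfspaces and invoking convexity and the Helly property), and your disc-diagram proof of the $4$-cycle lifting is essentially the paper's argument for the \emph{moreover} statement. However, there is a genuine gap in your verification of the link condition. Gromov's criterion requires the link of every vertex of $\hat{X}$ to be a \emph{flag} simplicial complex, i.e.\ every clique of every size spans a simplex; the assertion that every \emph{triangle} in the link is filled is strictly weaker (the $2$-skeleton of a $3$-simplex fills all of its triangles but is not flag). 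In $\hat{X}$, a $k$-clique in the link of $v$ is filled only if the corresponding $k$ edges at $v$ lie in an induced subgraph of $X^{(1)}$ isomorphic to the one-skeleton of a $k$-cube, and condition (iv) only handles $k=3$. For $k \geq 4$ the top vertices of the higher cube are not produced by any application of (iv) at the corner $v$ itself: already for $k=4$ one must apply (iv) at other corners (e.g.\ at a neighbour of $v$), use (iii) to identify the several candidate top vertices thus produced, verify that the resulting configuration is an embedded, induced $4$-cube subgraph so that the cube-completion actually contains the $4$-cube, and then set up an induction on $k$. None of this appears in your sketch, and the sentence claiming that filled triangles yield the flag property is a non sequitur; as written, your argument only shows that links are simplicial with filled triangles, which does not give CAT(0)-ness of $\hat{X}$.

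A second, smaller issue: even for $k=3$, condition (iv) produces a combinatorial two-skeleton of a $3$-cube, but for $\hat{X}$ to contain the corresponding $3$-cube this must be an induced subgraph on eight distinct vertices; chords are excluded because null-homotopic loops in a square complex have even length, but possible vertex identifications have to be ruled out using (ii) and (iii). This bookkeeping is exactly what the paper avoids by never passing through the cube-completion: it proves medianness of $X^{(1)}$ directly from the square-level hyperplane structure, so no information about higher-dimensional cubes is ever required. If you want to keep your route, the missing flagness-in-all-dimensions lemma is the piece you must supply; otherwise the direct median-point argument is the more economical path.
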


\noindent
The fourth item  of the theorem is illustrated by Figure~\ref{Condition}.
\begin{figure}[h!]
\begin{center}
\includegraphics[width=0.6\linewidth]{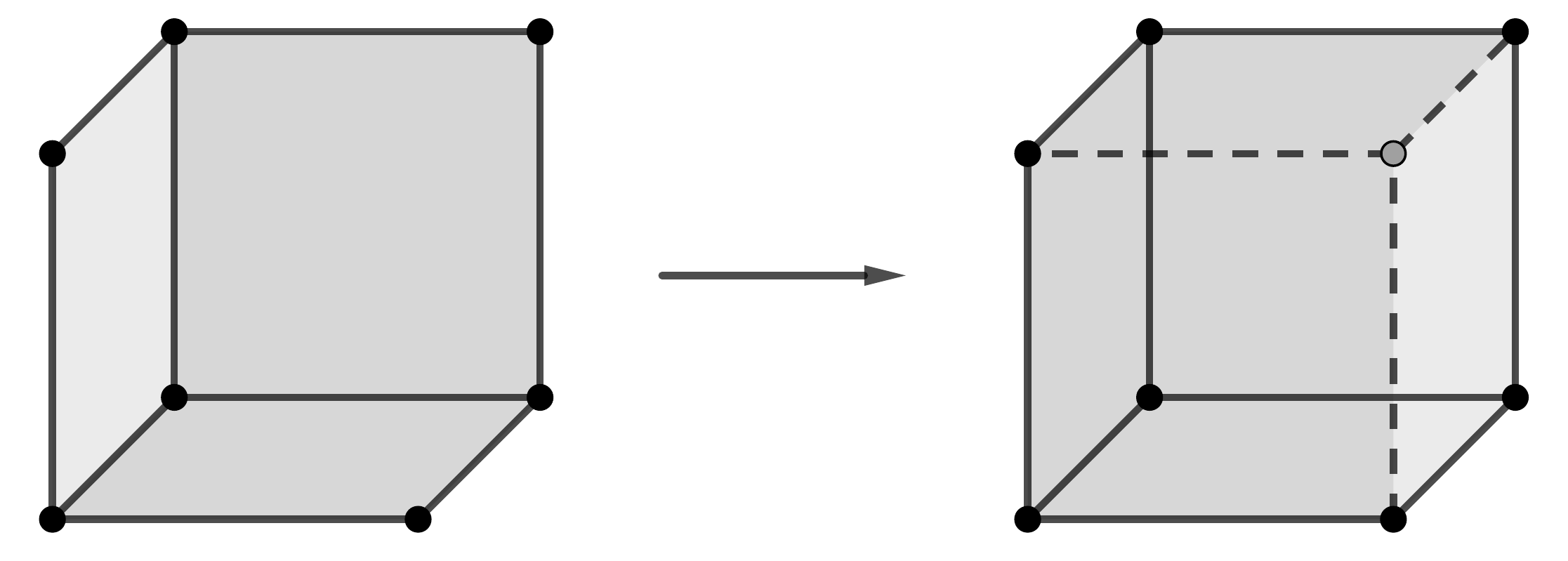}
\caption{A cycle of three squares spans the two-skeleton of a $3$-cube.}
\label{Condition}
\end{center}
\end{figure}

\begin{proof}[Sketch of proof of Theorem~\ref{thm:CriterionCC}.]
Because $X$ is simply connected, every loop delimits a \emph{disc diagram}. As a consequence of our assumptions, hexagonal moves and reductions can be applied to disc diagrams, so that the analysis of disc diagrams of minimal area from \cite[Section~2.e]{MR4298722} still holds, which allows us to recover most of the main properties of hyperplanes \cite[Sections~2.f and~2.g]{MR4298722}. 

\medskip \noindent
Let $x,y,z \in X$ be three vertices. According to \cite[Corollary~2.15]{MR4298722}, halfspaces are convex, so a median point, if it exists, has to belong to the intersection $I$ of all the halfspaces containing at least two vertices among $x,y,z$. Because any two vertices are separated by a hyperplane and because each hyperplane delimits exactly two halfspaces \cite[Corollaries~2.15 and~2.16]{MR4298722}, we know that $I$ is either empty or reduced to a single vertex. In order to conclude the proof of our theorem, it suffices to show that $I$ is non-empty.

\medskip \noindent
Distinguishing the halfspaces containing the three vertices $x,y,z$ from the other halfspaces, we write $I$ and $A \cap B$, where $A$ is the intersection of all the halfspaces containing $x,y,z$ and where $B$ is the intersection of all the halfspaces containing exactly two vertices among $x,y,z$. According to \cite[Lemma~2.19]{MR4298722}, $A$ coincides with the convex hull of $\{x,y,z\}$. Because only finitely many hyperplanes separate two given vertices \cite[Corollary~2.16]{MR4298722}, $B$ is an intersection of finitely many pairwise intersecting halfspaces. By applying the Helly property \cite[Lemma~2.10]{MR4298722}, we conclude that $I=A \cap B$ is non-empty, as desired. 

\medskip \noindent
Thus, we have proved the first assertion of our theorem, namely the one-skeleton of $X$ is a median graph. Next, given a $4$-cycle in the one-skeleton of $X$, consider a disc diagram of minimal area it delimits. It follows from \cite[Corollary~2.4]{MR4298722} that such a disc diagram must be reduced to a single square, proving the second assertion of our theorem. 
\end{proof}

\noindent
For more information on median graphs, or equivalently CAT(0) cube complexes, we refer to \cite{MR3329724, Book}.

\subsection{A normal form}\label{section:Normal}

\noindent
As mentioned in the introduction, any element of the cactus group $J_n$ can be represented by a braid-like picture with $n$ strands. In the article, we will often use a slightly different, but equivalent, representation of the elements of $J_n$. A \emph{diagram with $n$ strands} is a collection $n$ parallel segments drawn vertically on the plane (the \emph{strands}) with finitely many pairwise disjoint horizontal segments connecting the strands (the \emph{intervals}). Two diagrams are considered as equal if one is the image of the other under an isotopy of the plane.  

\medskip \noindent
\begin{minipage}{0.18\linewidth}
\begin{center}
\includegraphics[scale=0.48]{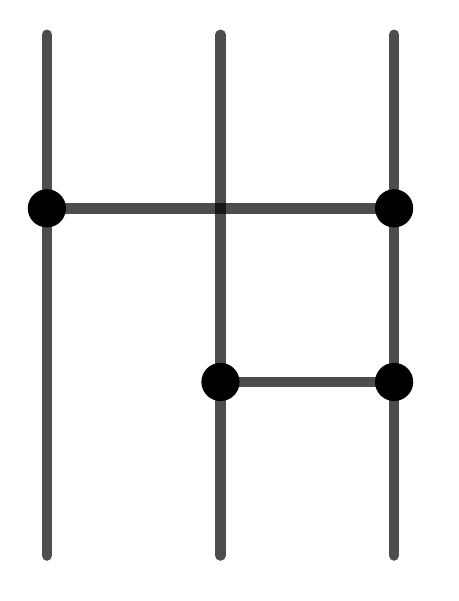}
\end{center}
\end{minipage}
\begin{minipage}{0.80\linewidth}
An element of $J_n$, say $s_{i_1,j_1} \cdots s_{i_r,j_r}$, is represented by the diagram with $n$ strands obtained by adding from top to bottom intervals connecting the strands $i_k$ and $j_k$, $k=1, \ldots, r$. For instance, the element $s_{1,3}s_{2,3}$ of $J_3$ is represented by the diagram given on the left. Conversely, every diagram with $n$ strands represents an element of $J_n$. One recovers the braided-like picture by ``pinching'' the horizontal intervals.
\end{minipage}

\medskip \noindent
The \emph{length} of a diagram is defined as its number of intervals. The relations of the group can be translated as simple relations between diagrams.

\medskip \noindent
\includegraphics[width=\linewidth]{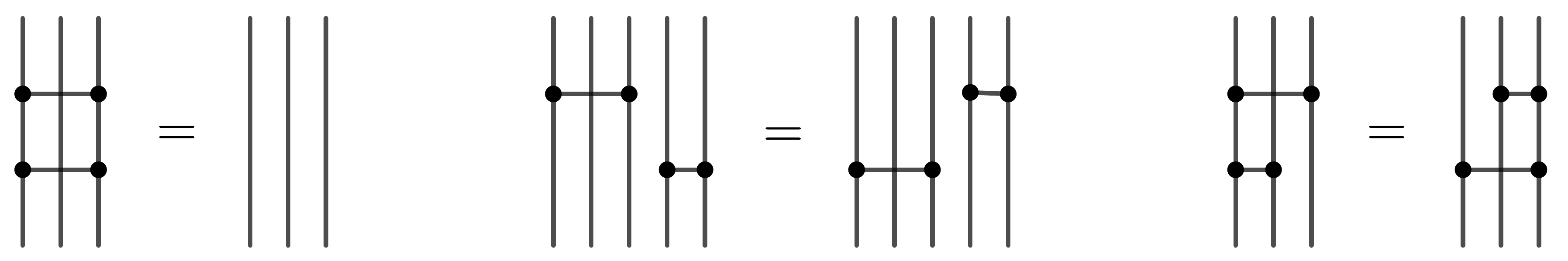}

\medskip \noindent
The first relation is referred to as a \emph{reduction} and the third as a \emph{flip}. The second relation justifies why diagrams containing intervals lying on the same horizontal line represent elements of the cactus group, even though they do not represent unique words of generators. A diagram is \emph{reduced} if no reduction applies, even after applying a sequence of flips. 

\medskip \noindent
In order to avoid confusion, in the sequel we denote by $=$ the equality between diagrams and by $\equiv$ the equality modulo flips and reductions. Observe that two diagrams are $\equiv$-equal if and only if they represent the same element in the cactus group. In the rest of the section, we show that it possible to define canonical representatives of $\equiv$-equivalent classes. 

\begin{definition}
A diagram is \emph{normal} if it is reduced and if no flip can applied in order to raise an interval above a smaller interval. 
\end{definition}

\noindent
Clearly, every diagram can be put into a normal form by reducing and flipping. It turns out that this normal form is unique.

\begin{prop}\label{prop:Normal}
For every diagram $\Delta$, there exists a unique normal diagram $\Delta_0$ such that $\Delta \equiv \Delta_0$. Moreover, $\Delta_0$ can be obtained from $\Delta$ by a sequence reductions and flippings of pairs of intervals that raise the bigger interval. 
\end{prop}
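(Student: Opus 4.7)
The plan is to view the claim as a confluence statement for a terminating rewriting system on diagrams and then invoke Newman's diamond lemma. I call a \emph{good flip} the move that, whenever two vertically adjacent nested intervals $I\subsetneq J$ appear with the smaller $I$ on top, swaps them and replaces $I$ by its mirror image $I'$ inside $J$; a \emph{reduction} is the cancellation of two adjacent copies of the same interval. By definition, a diagram is normal iff neither move applies, so existence of $\Delta_0$ is equivalent to termination of the rewriting process that iterates reductions and good flips.

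For termination I would use the lexicographic invariant $(\ell(\Delta), \operatorname{inv}(\Delta))$, where $\ell(\Delta)$ is the number of intervals of $\Delta$ and $\operatorname{inv}(\Delta)=\#\{(A,B) : A \text{ strictly above } B \text{ in } \Delta,\ |A|<|B|\}$. Reductions strictly decrease $\ell$. A good flip preserves $\ell$ and decreases $\operatorname{inv}$ by exactly one: the only pair whose inversion status changes is $(I,J)$ itself, since $|I'|=|I|$ and the position of any other interval $K$ relative to the swapped pair is unaffected by the flip.

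Uniqueness then reduces, by Newman's lemma, to local confluence. The critical pairs are the stacked triples on which two rewrites compete: $X,X,X$ (two reductions), configurations $X,X,J$ or $I,X,X$ with suitable strict nesting (a reduction against a flip), and $A\subsetneq B\subsetneq C$ stacked from top to bottom (two flips). In each configuration I would directly verify, using only $s_{p,q}^2=1$ and the mock commutation $s_{p,q}s_{m,r}=s_{p+q-r,p+q-m}s_{p,q}$, that the two diverging branches reach a common reduct after finitely many further good flips and reductions. Combined with termination, this produces a unique normal diagram in every $\equiv$-class and establishes both assertions of the proposition.

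The main obstacle will be the bookkeeping in the $A\subsetneq B\subsetneq C$ critical pair: the mock commutation relabels intervals at each flip, and one has to verify that reflecting $A$ inside $B$ and then reflecting the image inside $C$ yields exactly the interval obtained by first moving $B$ across $C$ and then reflecting $A$ inside the new position of $B$. This amounts to a short affine identity in the endpoint sums of the nested intervals, but the iterated case analysis through several successive flips is what requires genuine care.
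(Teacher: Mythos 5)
Your overall strategy is exactly the paper's: set up a terminating, locally confluent rewriting system (reductions plus flips raising the bigger interval) and apply Newman's lemma, with the critical-pair analysis you sketch matching the paper's case analysis, including the nested triple $A\subsetneq B\subsetneq C$ resolved by the conjugation identity $\mathrm{mirror}_C\circ\mathrm{mirror}_B=\mathrm{mirror}_{\mathrm{mirror}_C(B)}\circ\mathrm{mirror}_C$. The gap is in your termination invariant. First, ``$A$ strictly above $B$'' is not well defined on diagrams up to isotopy when the supports of $A$ and $B$ are disjoint (such intervals slide past each other), and if you restrict the count to pairs whose vertical order is forced, the quantity $\operatorname{inv}$ need not drop under a good flip. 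Concretely, take the diagram of $s_{9,10}\,s_{1,3}\,s_{1,10}$ in $J_{10}$: here $K=[9,10]$ lies above $J=[1,10]$, $I=[1,3]$ lies directly above $J$, and $K,I$ are incomparable. Before the flip the counted pairs are $(I,J)$ and $(K,J)$, so $\operatorname{inv}=2$. Flipping $I$ over $J$ yields $s_{9,10}\,s_{1,10}\,s_{8,10}$, where $I'=[8,10]$ now overlaps $K$; the counted pairs are $(K,J)$ and $(K,I')$, so again $\operatorname{inv}=2$. Your justification ``the position of any other interval $K$ relative to the swapped pair is unaffected'' overlooks precisely this: the flip moves $I$ horizontally to its mirror $I'$, so its comparability and nesting relations with third intervals change, and new small-above-big pairs can be created. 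Since $\ell$ is also unchanged by a flip, the lexicographic pair $(\ell,\operatorname{inv})$ does not certify termination.

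Termination is of course still true, but you need an invariant that is insensitive to the horizontal repositioning caused by flips. The paper's fix is to measure complexity in the braided-like picture: $\chi(\Omega)$ counts pairs of nodes $(n,m)$ with $n$ above $m$ such that every \emph{strand curve} through $n$ also passes through $m$. A flip does not change which strands are gathered at each node (the node of $I$ and the node of $I'$ collect the same strand curves, just at mirrored positions), and it only swaps the vertical order of the two flipped nodes; hence exactly the pair $(n_I,n_J)$ disappears, no pair is created, and $\chi$ drops by one (in the example above, $\chi$ goes from $2$ to $1$). Note that counting pairs of intervals nested as subsets of $\{1,\dots,n\}$ also fails on the same example, so passing to strand curves (or an equivalent flip-invariant labelling of the nodes) is genuinely needed; with that replacement, the rest of your argument goes through as in the paper.
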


\begin{proof}
Let $\mathcal{G}(\Delta)$ denote the oriented graph whose vertices are all the diagrams $\equiv$-equal to $\Delta$ and whose oriented edges connect a diagram $\Delta_1$ to a diagram $\Delta_2$ if $\Delta_2$ can be obtained from $\Delta_1$ by a reduction or by flipping a pair of intervals in order to raise the bigger interval. For short, we write $\Delta_1 \to \Delta_2$ if there is an oriented edge from $\Delta_1$ to $\Delta_2$. We also write $\Delta_1 \overset{\ast}{\to} \Delta_2$ if there exists an oriented path from $\Delta_1$ to $\Delta_2$. 

\medskip \noindent
First, observe that (the underlying unoriented graph of) $\mathcal{G}(\Delta)$ is connected. This amounts to saying that any two $\equiv$-equal diagrams can be related by a sequence of reductions, expansions, and flips. This is true by the very definition of $\equiv$.

\medskip \noindent
Next, observe that $\mathcal{G}(\Delta)$ is \emph{terminating}, i.e.\ there is no infinite oriented ray. It suffices to introduce a \emph{complexity} $\chi(\cdot) \in \mathbb{N}$ satisfying $\chi(\Phi) < \chi(\Psi)$ for all diagrams $\Psi \to \Phi$. Let $\Omega$ be a diagram. Thinking of $\Omega$ as braided-like picture, $\chi(\Omega)$ is the number of pairs of nodes $(n,m)$ such that all the strands passing through $n$ pass through $m$ as well and such that $n$ lies above $m$. One easily sees that, if $\Phi,\Psi$ are two diagrams satisfying $\Psi \to \Phi$, then $\chi(\Phi)< \chi(\Psi)$. This proves our claim.

\medskip \noindent
Finally, observe that $\mathcal{G}(\Delta)$ is \emph{locally confluent}, i.e.\ for all distinct diagrams $\Omega,\Phi,\Psi$ satisfying $\Omega \to \Phi$ and $\Omega \to \Psi$, there exists a diagram $\Xi$ such that $\Phi,\Psi \overset{\ast}{\to} \Xi$. There are several cases to consider depending on whether $\Phi,\Psi$ are obtained from $\Omega$ by a reduction or a flip. 

\medskip \noindent
\emph{Case 1:} $\Phi,\Psi$ are both obtained from $\Omega$ by reduction. If the two pairs of intervals corresponding to the reductions $\Omega \to \Phi$ and $\Omega \to \Psi$ intersect, then we must have $\Phi=\Psi$, contrary to our assumptions. If the pairs are disjoint, then performing the two reductions simultaneously provides a diagram $\Xi$ satisfying $\Phi,\Psi \to \Xi$.

\medskip \noindent
\emph{Case 2:} $\Phi$ is obtained by a reduction and $\Psi$ by a flip. If the two pairs of intervals corresponding to the operations $\Omega \to \Phi$ and $\Omega \to \Psi$ are disjoint, then again performing the two operations simultaneously provides a diagram $\Xi$ satisfying $\Phi,\Psi \to \Xi$. If the two pairs intersect, then, as justified by Figure~\ref{ConfluentOne}, $\Xi:= \Phi$ is the diagram we are looking for. 
\begin{figure}[h!]
\begin{center}
\includegraphics[width=0.7\linewidth]{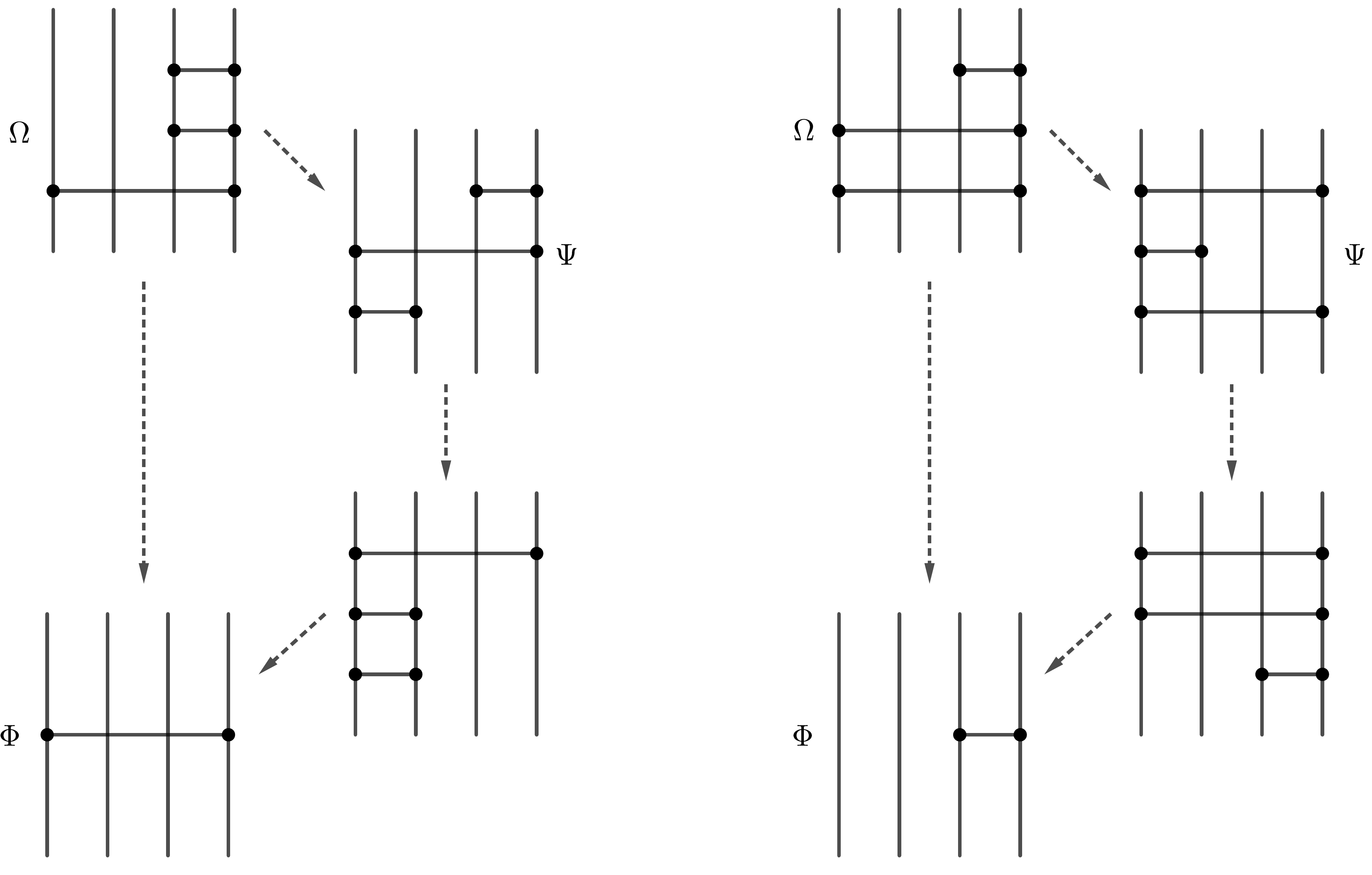}
\caption{Configuration from Case 2 in the proof of Proposition~\ref{prop:Normal}.}
\label{ConfluentOne}
\end{center}
\end{figure}

\medskip \noindent
\emph{Case 3:} $\Phi$ is obtained by a flip and $\Psi$ by a reduction. Exchanging the role played by $\Phi$ and $\Psi$, we are in the same situation as Case 2.

\medskip \noindent
\emph{Case 4:} $\Phi,\Psi$ are both obtained from $\Omega$ by a flip. If the two pairs of intervals corresponding to the flips $\Omega \to \Phi$ and $\Omega \to \Psi$ are disjoint, then performing the two flips simultaneously provides a diagram $\Xi$ satisfying $\Phi,\Psi \to \Xi$. Otherwise, the diagram $\Xi$ is constructed as illustrated by Figure~\ref{ConfluentTwo}.
\begin{figure}[h!]
\begin{center}
\includegraphics[width=0.8\linewidth]{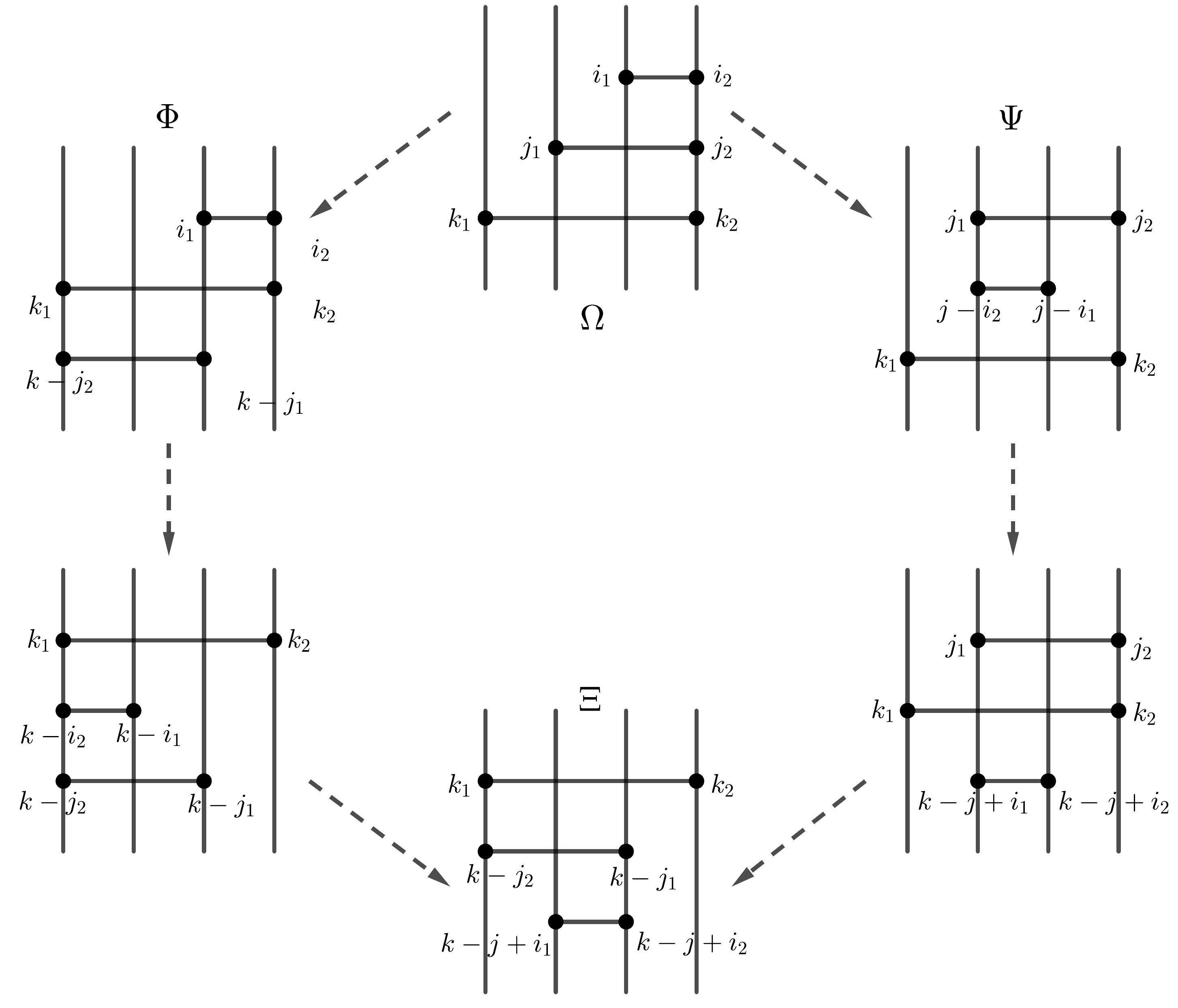}
\caption{Case 4 in the proof of Proposition~\ref{prop:Normal}, where $j:=j_1+j_2$ and $k:=k_1+k_2$.}
\label{ConfluentTwo}
\end{center}
\end{figure}

\medskip \noindent
One easily shows that $\mathcal{G}(\Delta)$ locally confluent implies that $\mathcal{G}(\Delta)$ is \emph{confluent}, i.e.\ for all distinct diagrams $\Omega,\Phi,\Psi$ satisfying $\Omega \overset{\ast}{\to} \Phi$ and $\Omega \overset{\ast}{\to} \Psi$, there exists a diagram $\Xi$ such that $\Phi,\Psi \overset{\ast}{\to} \Xi$; see for instance \cite[Theorem~1]{MR7372} or \cite[Section~4.1]{MR1127191}. Our proposition now follows by noticing that a vertex in $\mathcal{G}(\Delta)$ corresponds to a normal diagram if there is no oriented edge starting from it. Indeed, the fact that $\mathcal{G}(\Delta)$ is terminating implies that every diagram can be turned into a normal diagram by reducing and flipping pairs of interval that raise the bigger interval; and because $\mathcal{G}(\Delta)$ is confluent, two normal diagrams obtained from two diagrams $\equiv$-equal to $\Delta$ must coincide. 
\end{proof}

\subsection{Median geometry of cactus groups}\label{section:CactusMedian}

\noindent
In this section, we use the normal form of diagrams described in the previous section in order to show that the canonical Cayley graphs of cactus groups are median graphs. Consequently, cactus groups act properly, cocompactly, and vertex-freely on median graphs.

\begin{thm}\label{thm:CAT}
Let $n \geq 2$. The Cayley graph $\mathscr{C}_n$ of the cactus group $J_n$ with respect to its canonical generators is a median graph. 
\end{thm}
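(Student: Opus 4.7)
The plan is to verify the four hypotheses of Theorem~\ref{thm:CriterionCC} for the cube-completion $X_n$ of $\mathscr{C}_n$. The first task is to identify the squares of $X_n$: a $4$-cycle through a vertex $g$ arises from a relation $s_as_b = s_es_d$ in $J_n$ with $\{s_a,s_b\}\neq\{s_e,s_d\}$, and using Proposition~\ref{prop:Normal} I would check that such a relation exists precisely when the intervals $[a]$ and $[b]$ are either \emph{disjoint} (yielding the honest commutation $s_as_b = s_bs_a$) or \emph{nested} (yielding a flip). Thus the squares of $X_n$ come in two families, ``commutation squares'' and ``flip squares'', and they coincide with the $2$-cells attached by the commutation and flip relators in the presentation of $J_n$.

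From this description, conditions (i)--(iii) follow quickly. For (i), the presentation $2$-complex of $J_n$ is simply connected, and the involutive relators $s_{p,q}^2=1$ only contribute bigons that collapse once the edges of $\mathscr{C}_n$ are taken as unoriented, so $X_n$ is simply connected. For (ii), the four vertices of a commutation or flip square are distinct because the two generators involved have distinct intervals. For (iii), a path of two consecutive edges labelled $s_a,s_b$ at $g$ extends to at most one square, since the three admissible configurations for $\{[a],[b]\}$ (disjoint, $[a]\subsetneq[b]$, $[b]\subsetneq[a]$) are mutually exclusive and each determines the rewriting $s_as_b = s_es_d$ uniquely.

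The main obstacle is condition (iv). A cycle of three squares at a vertex $g$ amounts to three generators $s_a,s_b,s_c$ such that every pair among $\{[a],[b]\}$, $\{[b],[c]\}$, $\{[c],[a]\}$ is disjoint-or-nested --- that is, $\{[a],[b],[c]\}$ forms a \emph{laminar family}. I would leverage Proposition~\ref{prop:Normal} here: the laminar structure distinguishes a canonical stacking order on the three intervals (larger above smaller, with pairwise-disjoint intervals in any fixed side-by-side arrangement), and each of the six orderings of $s_a,s_b,s_c$ reduces to this common normal diagram through raising flips alone. Hence the six Hamiltonian paths of length $3$ from $g$ all terminate at a common vertex $v$, and the three squares opposite to the original three at $v$ are produced by the same commutation and flip relations applied to the appropriate pairs of generators, assembling the $2$-skeleton of a $3$-cube.

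With (i)--(iv) established, Theorem~\ref{thm:CriterionCC} delivers that $\mathscr{C}_n$ is median.
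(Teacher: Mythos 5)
Your verifications of (i)--(iii) are essentially sound (modulo the remark that ``the four vertices are distinct because the intervals are distinct'' already requires knowing that distinct generators define distinct nontrivial elements of $J_n$, which rests on Proposition~\ref{prop:Normal} or on the morphism $\Sigma$; and that, since you run Theorem~\ref{thm:CriterionCC} on the cube-completion rather than on the presentation $2$-complex, you owe a direct proof that \emph{every} $4$-cycle of $\mathscr{C}_n$ comes from a relation --- the paper sidesteps this by applying the criterion to the Cayley complex and only deduces the classification of $4$-cycles afterwards, as Corollary~\ref{cor:Cycles}). The genuine gap is in condition (iv), which you correctly identify as the crux. You claim that the six orderings of $s_a,s_b,s_c$ represent a common element (a common normal diagram reached by raising flips alone) and that the six Hamiltonian paths of the prospective $3$-cube are labelled by these orderings. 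This fails precisely when nesting occurs, i.e.\ in the situation that distinguishes cactus groups from right-angled Coxeter groups. Take $n=4$ and the generators $s_{1,4}$, $s_{1,2}$, $s_{3,4}$, whose intervals are pairwise nested or disjoint. The defining relation gives $s_{1,4}s_{1,2}=s_{3,4}s_{1,4}$, hence $s_{1,2}s_{1,4}=s_{1,4}s_{3,4}$ and therefore $s_{1,2}s_{1,4}s_{3,4}=s_{1,4}$, a word of length one, while $s_{1,4}s_{1,2}s_{3,4}$ has length three. So two of your six orderings do not even represent the same element as the other four, and a reduction (not only raising flips) occurs. The underlying point is that crossing a flip square changes the label of the opposite edge: in the genuine $3$-cube over this triple, writing $a=s_{1,4}$, $b=s_{1,2}$, $c=s_{3,4}$, the six geodesics from $1$ to the antipodal vertex are labelled $abc$, $acb$, $bab$, $bca$, $cac$, $cba$, not by the six permutations of $\{a,b,c\}$. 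As written, your argument does not produce a common terminal vertex, so the cube is not assembled.

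The repair is the construction used in the paper: to each subset of the three directions one associates the braided-like picture obtained by inverting the corresponding sets of strands, tracked by their \emph{labels} rather than their positions; equivalently, after applying $s_a$ the next letter is the generator supported on the interval reflected by the earlier inversions. Because the three sets of strands are pairwise disjoint or nested, this picture does not depend on the order in which the inversions are performed, and the resulting eight vertices, together with the relation squares joining them, form the two-skeleton of a $3$-cube. With this replacement for your treatment of (iv), the rest of your outline matches the paper's proof.
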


\begin{proof}
Let $\mathscr{SC}_n$ denote the Cayley complex of the presentation of $J_n$, i.e.\ the square complex whose one-skeleton is $\mathscr{C}_n$ and whose squares are bounded by the following $4$-cycles:
\begin{itemize}
	\item $(1, s_{p,q}, s_{p,q} s_{m,r}, s_{p,q} s_{m,r} s_{p,q}, s_{p,q}s_{m,r}s_{p,q} s_{m,r}=1)$ for all $1 \leq p < q \leq n$ and $1 \leq m < r \leq n$ satisfying $[p,q] \cap [m,r]= \emptyset$;
	\item $(1, s_{p,q}, s_{p,q}s_{m,r}, s_{p,q}s_{m,r}s_{p,q}, s_{p,q}s_{m,r} s_{p,q} s_{p+q-r,p+q-m}=1)$ for all $1 \leq p < q \leq n$ and $1 \leq m < r \leq n$ satisfying $[m,r] \subset [p,q]$. 
\end{itemize}
In other words, the squares are given by the relations from the canonical presentation of $J_n$. This implies that $\mathscr{SC}_n$ is simply connected. Because a word of generators given by a relation from the presentation of $J_n$ is uniquely determined by any two consecutive letters, we know that two squares in $\mathscr{SC}_n$ cannot share two consecutive sides. Moreover, we deduce from Proposition~\ref{prop:Normal} that the squares in $\mathscr{SC}_n$ are embedded. 

\medskip \noindent
Thus, in order to deduce from Theorem~\ref{thm:CriterionCC} that $\mathscr{C}_n$ is a median graph, it suffices to verify that, given a vertex $o \in \mathscr{C}_n$ and three neighbours $x_1,x_2,x_3 \in \mathscr{C}_n$, if the edges $[o,x_1],[o,x_2],[o,x_3]$ pairwise span a square in $\mathscr{SC}_n$ then they globally span the two-skeleton of a $3$-cube. Because $J_n$ acts on $\mathscr{C}_n$ vertex-transitively, we can suppose without loss of generality that $o=1$. For $i=1,2,3$, let $S_i$ denote the set of strands braided by $x_i$. Notice that, for all $1 \leq i \neq j \leq 3$, because $[o,x_i]$ and $[o,x_j]$ span a square, $S_i$ and $S_j$ must be either disjoint or nested. For every $0 \leq r \leq 3$ and for all $1 \leq i_1< \ldots < i_r \leq 3$, let $x(i_1, \ldots, i_r)$ denote the braid-like picture obtained by inverting the strands in $S_{i_1}$, next the strands in $S_{i_2}$, and so on; notice that $x(i_1, \ldots, i_r)$ is well-defined because the $S_i$ are pairwise disjoint or nested. Observe that $x(i_1,\ldots, i_r)$ coincides with $o=1$ if $r=0$ and with $x_i$ if $r=1$ and $i_1=i$. The vertices $x(i_1, \ldots, i_r)$ are the vertices are the $3$-cube we are looking for. See Figure~\ref{Cube} for the illustration of a particular case. 
\end{proof}
\begin{figure}[h!]
\begin{center}
\includegraphics[width=0.6\linewidth]{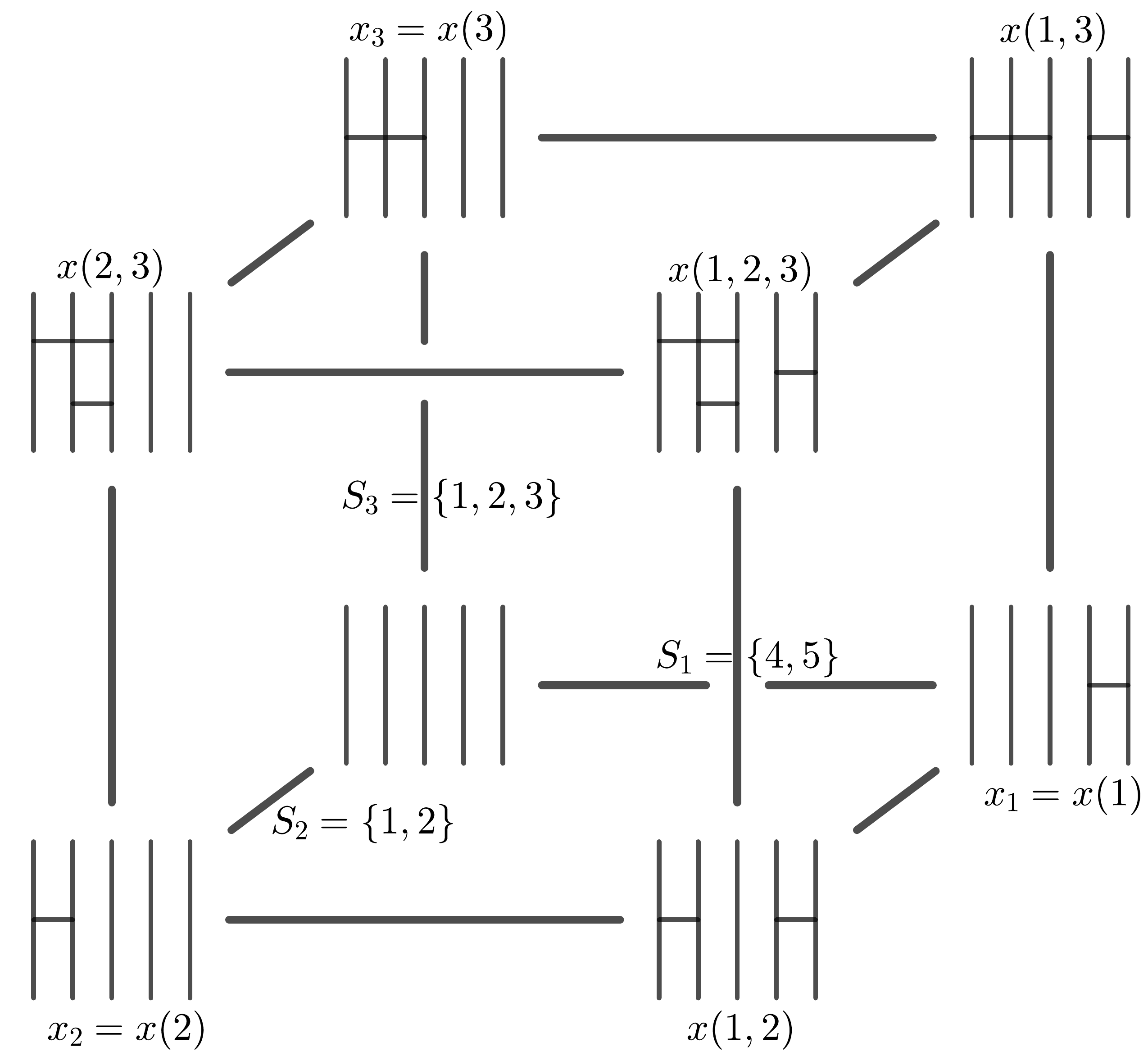}
\caption{Cube constructed in the proof of Theorem~\ref{thm:CAT}.}
\label{Cube}
\end{center}
\end{figure}

\noindent
As a by-product of the previous proof, combined with the description of $4$-cycles from Theorem~\ref{thm:CriterionCC}, we deduce that:

\begin{cor}\label{cor:Cycles}
Let $n \geq 2$. All the $4$-cycles of $\mathscr{C}_n$ correspond to mock commutations, i.e.\ they are of one of the following forms:
\begin{itemize}
	\item $(g, gs_{p,q}, gs_{p,q} s_{m,r}, gs_{p,q} s_{m,r} s_{p,q}, gs_{p,q}s_{m,r}s_{p,q} s_{m,r}=g)$ for some $1 \leq p < q \leq n$ and $1 \leq m < r \leq n$ satisfying $[p,q] \cap [m,r]= \emptyset$ and some $g \in J_n$;
	\item $(g, gs_{p,q}, gs_{p,q}s_{m,r}, gs_{p,q}s_{m,r}s_{p,q}, gs_{p,q}s_{m,r} s_{p,q} s_{p+q-r,p+q-m}=g)$ for some $1 \leq p < q \leq n$ and $1 \leq m < r \leq n$ satisfying $[m,r] \subset [p,q]$ and some $g \in J_n$. 
\end{itemize}
\end{cor}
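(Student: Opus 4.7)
The plan is to invoke the ``moreover'' clause of Theorem~\ref{thm:CriterionCC}. In the proof of Theorem~\ref{thm:CAT} we verified that the Cayley complex $\mathscr{SC}_n$ satisfies the four hypotheses of that criterion, so the criterion tells us, in addition to medianness, that every $4$-cycle in the one-skeleton $\mathscr{C}_n = \mathscr{SC}_n^{(1)}$ bounds a square of $\mathscr{SC}_n$. This reduces the classification of $4$-cycles of $\mathscr{C}_n$ to the classification of squares of $\mathscr{SC}_n$.

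Next, I would appeal to the very definition of $\mathscr{SC}_n$: by construction its squares are attached along the two families of $4$-cycles listed in the statement, one family for each mock-commutation relation (disjoint intervals $[p,q] \cap [m,r] = \emptyset$, or nested intervals $[m,r] \subset [p,q]$). In particular, the squares whose boundary passes through the identity are precisely the two bulleted cycles with $g = 1$.

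Finally, I would transport this to an arbitrary basepoint. Since $J_n$ acts on $\mathscr{C}_n$ by left-multiplication, vertex-transitively, and carries squares of $\mathscr{SC}_n$ to squares of $\mathscr{SC}_n$, an arbitrary $4$-cycle $C$ is obtained by left-translating one of the basepoint cycles above by some $g \in J_n$, producing precisely the two forms in the statement. The only bookkeeping remark is that choosing a different starting vertex on $C$ or traversing it in the opposite direction only cyclically permutes or reverses its vertex list; because each mock-commutation relation is a palindromic product of four generators, the resulting list still fits one of the two prescribed forms (with a possibly different $g$).

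I do not expect any substantive obstacle: all the geometric content is already encoded in Theorems~\ref{thm:CriterionCC} and~\ref{thm:CAT}, and what remains is simply to read off that the only attaching cycles of $\mathscr{SC}_n$ are the mock-commutation squares, then translate by the $J_n$-action.
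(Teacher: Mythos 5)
Your proposal is correct and matches the paper's own argument: the paper derives Corollary~\ref{cor:Cycles} exactly as a by-product of the proof of Theorem~\ref{thm:CAT}, using the "moreover" clause of Theorem~\ref{thm:CriterionCC} that every $4$-cycle bounds a square of $\mathscr{SC}_n$, and then reading off that the squares of the Cayley complex are precisely the $J_n$-translates of the relation squares. Your extra bookkeeping about basepoint and orientation is fine and implicit in the paper.
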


\section{Word and conjugacy problems}\label{section:WandCP}

\noindent
It is known that CAT(0) groups, including groups acting properly and cocompactly on median graphs, have solvable word and conjugacy problems (see for instance \cite{MR1744486}). Therefore, Theorem~\ref{thm:CAT} immediately implies that word and conjugacy problems are solvable in cactus groups. However, the algorithms thus obtained are not applicable by hand, which is not entirely satisfying. In this section, we show that the median structure of Cayley graphs of cactus groups allows us to solve the word and conjugacy problems very explicitly and efficiently.

\subsection{Word problem}\label{section:Word}

\noindent
An efficient solution to the word problem in a cactus group is already given by Proposition~\ref{prop:Normal}. Given two words of generators $w_1$ and $w_2$, draw the interval diagrams of $w_1w_2^{-1}$, and find its normal form by raising each interval as much as possible. The words $w_1$ and $w_2$ represent the same element in the cactus group if and only if the normal diagram contains no interval. Observe that the number of required operations is about the square of $|w_1|+|w_2|$ (where $|\cdot|$ denotes the length of the word under consideration).

\medskip \noindent
Let us describe another solution to the word problem in $J_n$, given in \cite{Paolo}, because it admits a nice geometric interpretation. 

\medskip \noindent
Let $X$ be a median graph and $\gamma \subset X$ a path. If $\gamma$ is not a geodesic, then there exists a hyperplane crossing $\gamma$ twice (Theorem~\ref{thm:MedianBig}). Let $e_1,e_2 \subset \gamma$ be two edges of $\gamma$ crossed by the same hyperplane, say $H$. We choose $e_1,e_2$ as close as possible. Consequently, the subsegment $\alpha$ of $\gamma$ between $e_1,e_2$ cannot cross a hyperplane twice, so it must be a geodesic. Since fibres of hyperplanes are convex (Theorem~\ref{thm:MedianBig}), $\alpha$ lies in a fibre of $H$.

\medskip \noindent
\begin{minipage}{0.55\linewidth}
\includegraphics[width=0.95\linewidth]{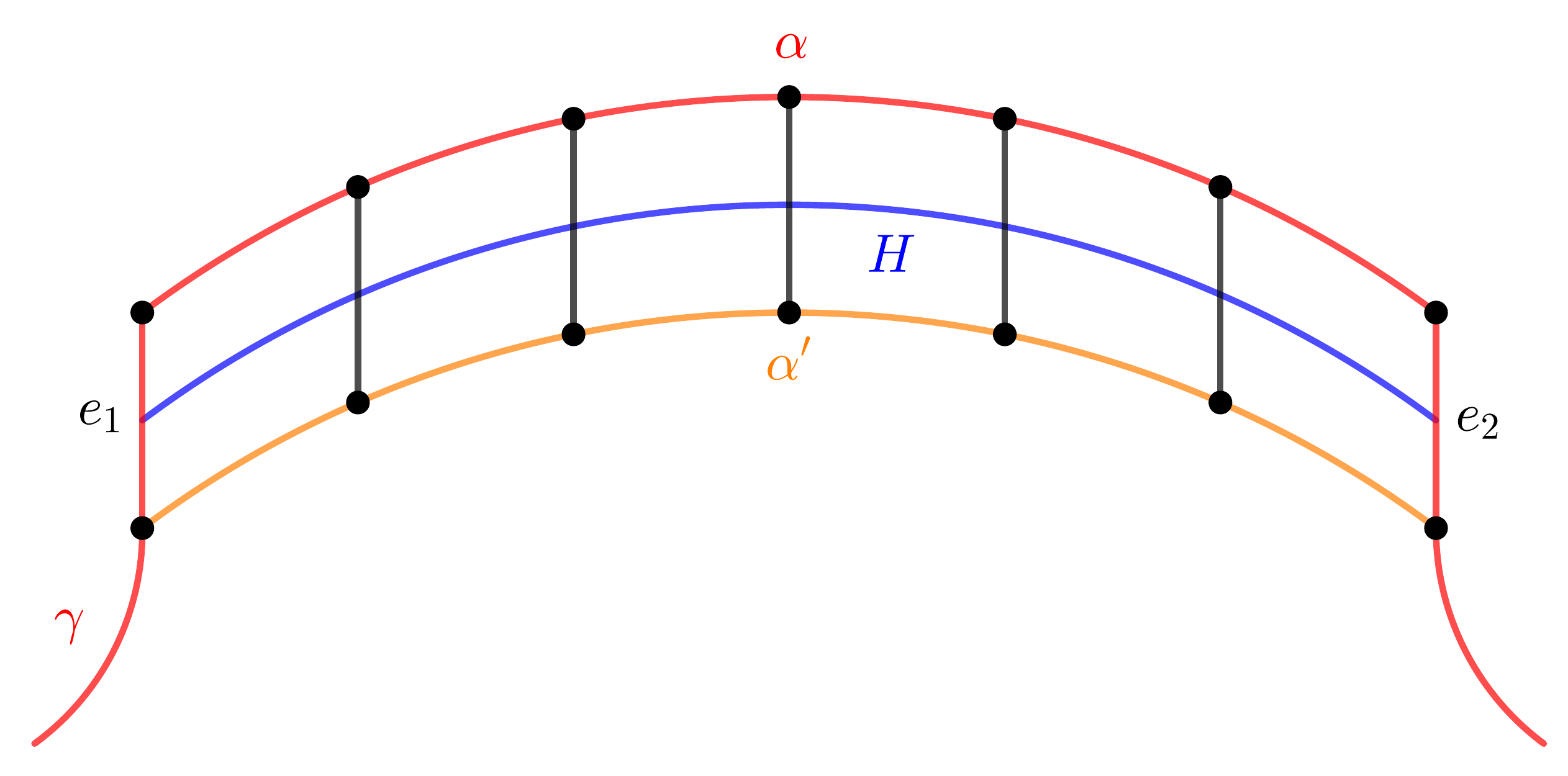}
\end{minipage}
\begin{minipage}{0.43\linewidth}
Replacing $e_1 \cup \alpha \cup e_2$ by the image $\alpha'$ of $\alpha$ in the opposite fibre of $H$ shortens $\gamma$. By iterating the process, one obtains a geodesic after at most $\mathrm{length}(\gamma)/2$ steps. 
\end{minipage}

\medskip \noindent
Transferring this geometric picture to $J_n$, it follows that every word of generators can be turned into a word of minimal length by applying the following operation as much as possible:
\begin{itemize}
	\item[] if a word $w$ contains two letters $\ell$ and $\ell'$ such that $\ell'$ can be shifted all the way to $\ell$ and in the process becomes $\ell$, then do this and remove the subword $\ell \ell$ thus obtained. 
\end{itemize}
A word to which such an operation cannot be applied is \emph{irreducible}. Our geometric argument above, when applied to the Cayley graph of a cactus group, shows that a word of generators has minimal length if and only if it is irreducible. This method also allows us to solve the word problem efficiently and by hand for short words.

\medskip \noindent
It is worth noticing that we can also justify geometrically that two irreducible words present the same element in the cactus group if and only if one can be obtained from the other by a sequence of mock commutations. This is due to the fact that, in a median graph, given two geodesics with the same endpoints, one can always be obtained from the other by \emph{flipping squares}, i.e.\ replacing two consecutive edges from a square with the other pair of consecutive edges \cite[Theorem~4.6]{MR1347406}.

\subsection{Conjugacy problem}\label{section:Conjugacy}

\noindent
Before describing how to solve the conjugacy problem in cactus groups, we need to recall some basic definitions and results related to diagrams over group presentations. The reference we essentially follow is \cite{LS}. When reading the definitions below, we refer to Figure \ref{diag} for examples; there, $\partial \Delta_2$ is labelled by $(a^{-1}a^{2}a^{-1}, a^3a^{-3})$, and $\partial \Delta_3$ by $b^{-1}a^{-1}ba^4b^{-1}a^{-2}ba^{-2}$. 

\begin{definition}
Let $\mathcal{P}= \langle X \mid R \rangle$ be a group presentation, and $D$ a finite $2$-complex embedded into the plane whose edges are oriented and labelled by elements of $X$. An oriented path $\gamma$ in the one-skeleton of $D$ can be written as a concatenation $e_1^{\epsilon_1} \cdots e_n^{\epsilon_n}$, where $\epsilon_1, \ldots, \epsilon_n \in \{+1,-1\}$ and where each $e_i$ is an edge endowed with the orientation coming from $D$. Then the word \emph{labelling} $\gamma$ is $\ell_1^{\epsilon_1} \cdots \ell_n^{\epsilon_n}$ where $\ell_i$ is the label of $e_i$ for every $1 \leq i \leq n$. If, for every $2$-cell $F$ of $D$, the word labelling the boundary of $F$ (an arbitrary basepoint and an arbitrary orientation being fixed) is a cyclic permutation of a relation of $R$ or of the inverse of a relation of $R$, then $D$ is a \emph{diagram over $\mathcal{P}$}.
\end{definition}
\begin{figure}
\begin{center}
\includegraphics[trim={0 6.5cm 10cm 0},clip,scale=0.34]{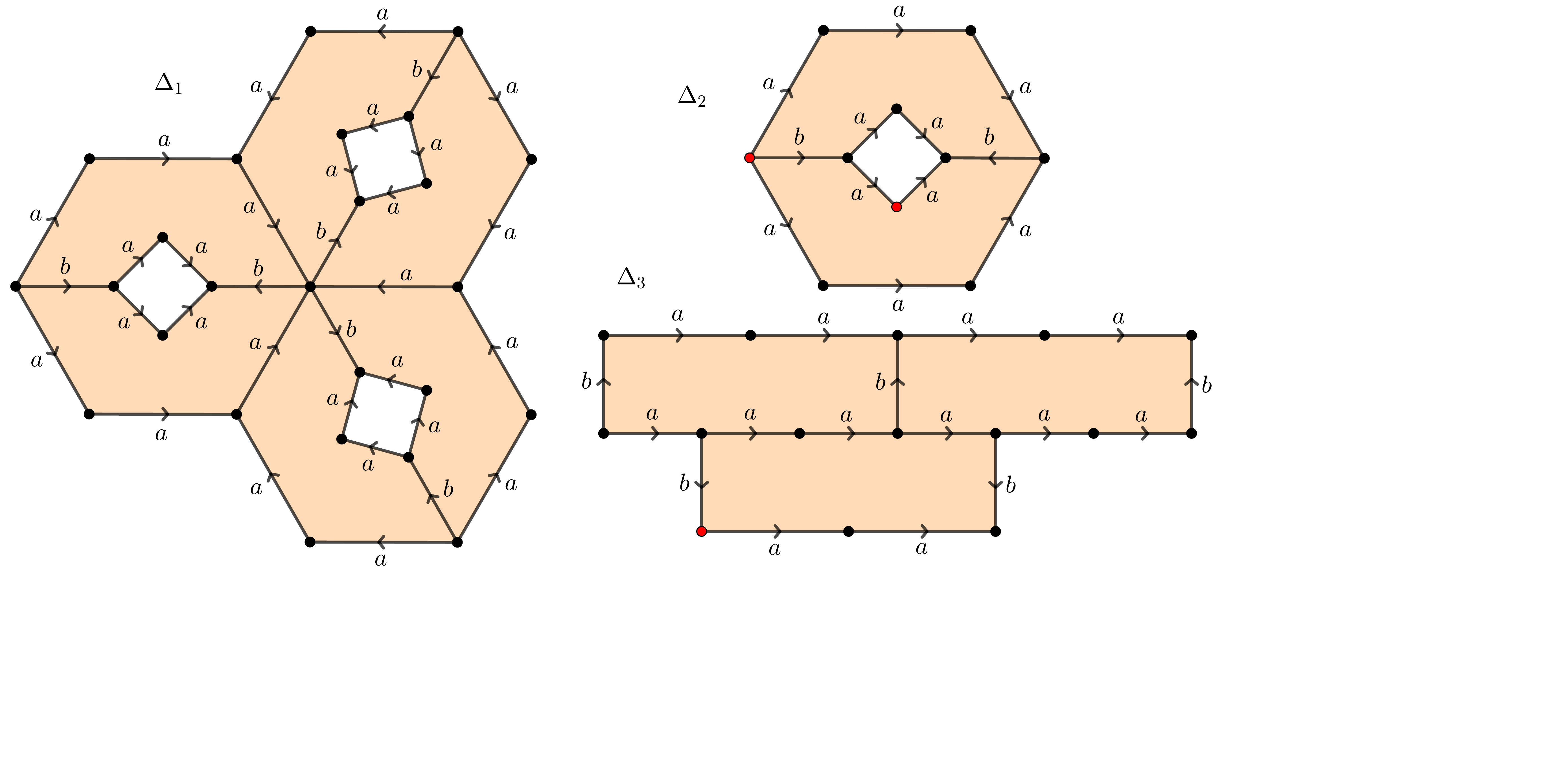}
\caption{A general diagram, a van Kampen diagram, and an annular diagram over the presentation $\langle a,b \mid ba^2b^{-1}=a^3 \rangle$.}
\label{diag}
\end{center}
\end{figure}

\noindent
We will be interested in two specific types of diagrams, namely van Kampen diagrams and annular diagrams, which are respectively related to the word and the conjugacy problems.

\begin{definition}
Let $\mathcal{P}= \langle X \mid R \rangle$ be a group presentation. A \emph{van Kampen diagram over $\mathcal{P}$} is a simply connected diagram over $\mathcal{P}$ with a fixed vertex in its boundary (i.e.\ the intersection between $D$ and the closure of $\mathbb{R}^2 \backslash D$). A \emph{boundary cycle} of $D$ is a cycle $\alpha$ of minimal length which contains all the edges in the boundary of $D$ which does not cross itself, in the sense that, if $e$ and $e'$ are consecutive edges of $\alpha$ with $e$ ending at a vertex $v$, then $e^{-1}$ and $e'$ are adjacent in the cyclically ordered set of all edges of $D$ beginning at $v$. The \emph{label} of the boundary of $D$ is the word labelling the boundary cycle of $D$ which begins at the basepoint of $D$ and which turns around $D$ clockwise. 
\end{definition}

\noindent
The connection between van Kampen diagrams and the word problem is made explicit by the following statement. We refer to \cite[Theorem V.1.1 and Lemma V.1.2]{LS} for a proof.

\begin{prop}
Let $\mathcal{P}= \langle X \mid R \rangle$ be a presentation of a group $G$ and $w \in X^{\pm}$ a non-empty word. There exists a van Kampen diagram over $\mathcal{P}$ whose boundary is labelled by $w$ if and only if $w=1$ in $G$.
\end{prop}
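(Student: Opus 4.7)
The plan is to prove the two implications separately, each resting on a classical construction that converts between the algebraic description of the trivial element (as a product of conjugates of relators in the free group $F(X)$) and the planar combinatorial description encoded by a van Kampen diagram.

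For the ``if'' direction, I would argue by induction on the number $N$ of $2$-cells of the diagram $D$. The base case $N=0$ is immediate: a simply connected $1$-complex in the plane is a tree, so its boundary cycle traverses every edge once in each direction, whence its label freely reduces to the empty word and already equals the identity in the free group $F(X)$, a fortiori in $G$. For the inductive step, I would remove from $D$ a $2$-cell $F$ whose boundary meets $\partial D$ in at least one edge (such a cell exists by planarity). Reading the boundary cycle of $D$ starting just before the arc $\partial F \cap \partial D$, one sees that the label $w$ of $\partial D$ can be expressed, in $F(X)$, as $w=u\cdot v r^{\pm 1} v^{-1} \cdot u'$, where $r$ is a cyclic permutation of a relator of $R$ (the label of $\partial F$) and where $uu'$ is the label of the boundary of the diagram $D'$ obtained by deleting the interior of $F$ and the arc $\partial F \cap \partial D$. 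Since $D'$ has $N-1$ two-cells, induction yields $uu'=1$ in $G$; consequently $w=1$ in $G$ as well.

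For the ``only if'' direction, I would start from the hypothesis $w=1$ in $G$, which means that $w$ belongs to the normal closure of $R$ in $F(X)$; hence there exists a factorisation $w = \prod_{i=1}^n u_i r_i^{\epsilon_i} u_i^{-1}$ in $F(X)$ with $r_i \in R$. From this data I would build a planar $2$-complex $D_0$ as a ``bouquet of lollipops'' based at a vertex $v_0$: for each $i$, attach a path labelled by $u_i$ to $v_0$ and cap its far end with a disk whose boundary is labelled by $r_i^{\epsilon_i}$; arrange the lollipops cyclically around $v_0$ in the order $i=1,\dots,n$. By construction $D_0$ is a simply connected planar $2$-complex and its boundary cycle based at $v_0$ spells precisely the unreduced word $\prod u_i r_i^{\epsilon_i} u_i^{-1}$. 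To obtain a diagram whose boundary label is literally $w$, I would then perform a sequence of \emph{folding} operations (cancelling pairs of oppositely oriented edges with the same label that are successive along the boundary, and identifying bigons so created) until the boundary label is freely reduced; since the original product equals $w$ in $F(X)$, the resulting reduced label is exactly $w$ (or a cyclic conjugate, which can be fixed by choice of basepoint), and the resulting complex is the required van Kampen diagram over $\mathcal{P}$.

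The main obstacle, as always in this classical argument, lies in the folding step of the backward direction: one must check that each folding preserves both planarity and the property that each $2$-cell is still labelled by a cyclic permutation of a relator (or its inverse). I would handle this by performing folds only along boundary edges and by verifying that an elementary fold either removes a pair of cancelling boundary edges (leaving the $2$-cells untouched) or identifies two boundary arcs of a single $2$-cell, which corresponds to a cyclic re-labelling of the same relator. Since each fold strictly decreases the length of the (unreduced) boundary, the process terminates and produces the desired diagram. The forward direction, by contrast, is essentially bookkeeping once the $2$-cell $F$ is chosen, and does not present any genuine difficulty.
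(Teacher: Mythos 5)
Your argument is correct and is exactly the classical proof of van Kampen's lemma: the paper itself gives no proof of this statement, simply citing Lyndon--Schupp (Theorem V.1.1 and Lemma V.1.2), and your two directions --- induction on the number of $2$-cells by excising a cell adjacent to the outer boundary, and the lollipop construction from a factorisation $w=\prod_i u_i r_i^{\epsilon_i}u_i^{-1}$ followed by folding until the boundary label is freely reduced --- are precisely the standard argument found there, with the folding case analysis being the well-known fiddly point you correctly flag. One cosmetic slip in the forward direction: after deleting the interior of $F$ and the arc $\partial F\cap\partial D$, the boundary of $D'$ is labelled by $u$ followed by the label of the complementary arc of $\partial F$ followed by $u'$, not by $uu'$; but either way $w$ differs from the label of $\partial D'$ by a conjugate of a relator in $F(X)$, which is all the induction requires.
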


\noindent
Next, let us consider annular diagrams.

\begin{definition}\label{def:annulardiag}
Let $\mathcal{P}= \langle X \mid R \rangle$ be a group presentation. An \emph{annular diagrams over $\mathcal{P}$} is a diagram $D$ over $\mathcal{P}$ such that $\mathbb{R}^2 \backslash D$ has exactly two connected components, endowed with a fixed vertex in each connected component of its boundary (i.e.\ the  intersection between $D$ and the closure of $\mathbb{R}^2 \backslash D$). The \emph{inner boundary} (resp.\ \emph{outer boundary}) of $D$, denoted by $\partial_\text{inn}D$ (resp.\ $\partial_\text{out}D$), is the intersection of $D$ with the bounded (resp.\ unbounded) component of $\mathbb{R}^2 \backslash D$. A cycle of minimal length (that does not cross itself) which contains all the edges in the outer (resp.\ inner) boundary of $D$ is an \emph{outer} (resp.\ \emph{inner}) \emph{boundary cycle} of $D$. The \emph{label} of the boundary of $D$ is the couple $(w_1,w_2)$ where $w_1$ (resp.\ $w_2$) is the word labelling the inner (resp.\ outer) boundary cycle of $D$ which begins at the basepoint of $D$ and which turns clockwise.
\end{definition}

\noindent
The connection between annular diagrams and the conjugacy problem is made explicit by the following statement. We refer to \cite[Lemmas V.5.1 and V.5.2]{LS} for a proof.

\begin{prop}
Let $\mathcal{P}= \langle X \mid R \rangle$ be a presentation of a group $G$ and $w_1,w_2 \in X^{\pm}$ two non-empty words. There exists an annular diagrams over $\mathcal{P}$ whose boundary is labelled by $(w_1,w_2)$ if and only if $w_1$ and $w_2$ are conjugate in $G$.
\end{prop}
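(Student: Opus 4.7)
The plan is to reduce the statement to the analogous statement for van Kampen diagrams (the previous proposition) via a standard cut-and-paste procedure: an annular diagram cut along a simple arc from the inner to the outer basepoint becomes a van Kampen diagram, and conversely a van Kampen diagram with boundary of the form $u^{-1}w_1 u w_2^{-1}$ can be glued along the two $u$-arcs to produce an annular diagram.

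For the ``if'' direction, assume $w_1$ and $w_2$ are conjugate in $G$, so there is a word $u \in X^{\pm}$ with $u^{-1}w_1 u w_2^{-1} = 1$ in $G$. Applying the previous proposition to this word yields a van Kampen diagram $D$ over $\mathcal{P}$ whose boundary cycle (read clockwise from the basepoint) is labelled by $u^{-1} w_1 u w_2^{-1}$. The boundary therefore decomposes into four consecutive arcs $\alpha_1,\beta_1,\alpha_2,\beta_2$ labelled respectively by $u^{-1}, w_1, u, w_2^{-1}$. The arcs $\alpha_1$ and $\alpha_2$ carry identical labels when read in opposite directions, so one may identify $\alpha_1$ with $\alpha_2^{-1}$ edgewise, respecting orientations and labels. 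The quotient space embeds into the plane as an annulus (up to a planar isotopy), with $\beta_1$ and $\beta_2$ becoming the two boundary components; choosing the basepoint on each to be the image of the common endpoint of the $\alpha_i$'s gives an annular diagram over $\mathcal{P}$ whose inner and outer boundaries are labelled by $w_1$ and $w_2$ in some order, as required (by swapping the roles of inner and outer, both pairings are achieved).

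For the ``only if'' direction, let $A$ be an annular diagram with boundary labelled by $(w_1,w_2)$, with basepoints $v_{\mathrm{inn}}$ and $v_{\mathrm{out}}$ on its inner and outer boundary cycles. Pick an embedded edge-path $\gamma$ in $A$ from $v_{\mathrm{inn}}$ to $v_{\mathrm{out}}$ which meets $\partial A$ only at its endpoints; such a $\gamma$ exists because $A$ is connected and the one-skeleton of $A$ spans every face. Let $u$ denote the word labelling $\gamma$. Cutting $A$ open along $\gamma$ produces a simply connected planar $2$-complex $D$ whose boundary cycle, read from the appropriate basepoint and clockwise, is labelled by $u^{-1} w_1 u w_2^{-1}$ (up to a cyclic permutation and up to replacing $u$ by $u^{-1}$, depending on conventions for orientations). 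By construction $D$ is a van Kampen diagram over $\mathcal{P}$, so the previous proposition gives $u^{-1} w_1 u w_2^{-1} = 1$ in $G$, that is, $u^{-1} w_1 u = w_2$. Hence $w_1$ and $w_2$ are conjugate in $G$.

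The main obstacle is purely combinatorial: one has to keep track of orientations and basepoints on the four boundary arcs during gluing, and in the converse direction one has to choose the cutting arc $\gamma$ so that the resulting complex is an honest planar diagram (in particular, $\gamma$ should be chosen simple, and the cut is performed by duplicating the edges of $\gamma$). Once these conventions are fixed, both directions are formal consequences of the previous proposition, and the details are carried out in \cite[Chapter~V]{LS}.
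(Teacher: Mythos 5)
Your cut-and-paste argument is exactly the standard proof of this equivalence: the paper itself gives no independent argument but simply cites \cite[Lemmas V.5.1 and V.5.2]{LS}, where the proposition is proved by the same gluing of the two $u$-arcs of a van Kampen diagram for $u^{-1}w_1uw_2^{-1}$ and, conversely, by cutting the annular diagram along a simple arc joining the two basepoints. So your proposal is correct and follows essentially the same route as the source the paper relies on, with the orientation and degeneracy bookkeeping appropriately flagged.
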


\noindent
When our group presentation contains only relations of length four, diagrams are made of squares. This allows us to introduce \emph{dual curves}, i.e.\ non-empty minimal subsets $\alpha$ such that the intersection between $\alpha$ and an edge is always either empty or the midpoint and such that if $\alpha$ contains the midpoint of one side of a square then it contains the segment connecting this point to the midpoint of the opposite side. Dual curves in diagrams play the same role as hyperplanes in median graphs. The following result, which is a direct consequence of \cite[Corollary~2.4]{MR4298722}, will play a fundamental role in our solution to the conjugacy problem:

\begin{prop}\label{prop:AnnularDiag}
Let $\mathcal{P}= \langle \Sigma \mid \mathcal{R} \rangle$ be a group presentation whose Cayley complex satisfies the assumption of Theorem~\ref{thm:CriterionCC}. Let $w \in \Sigma^\pm$ be a word and $\Delta$ a van Kampen diagram whose boundary is labelled by $w$. If $\Delta$ contains a dual curve that is not an embedded segment or two dual curves intersecting more than once, then there exists another van Kampen diagram whose boundary is also labelled by $w$ but whose area is $\leq \mathrm{area}(\Delta)-2$. 
\end{prop}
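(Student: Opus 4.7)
The plan is to reduce the claim to the systematic study of square disc diagrams over a presentation whose Cayley complex satisfies the conditions of Theorem~\ref{thm:CriterionCC}, as carried out in \cite[Section~2]{MR4298722}. First, I would view $\Delta$ as a disc diagram for $w$ in the Cayley complex $\mathscr{SC}$ of $\mathcal{P}$, so that each dual curve of $\Delta$ is sent, under the natural labelling map $\Delta \to \mathscr{SC}$, into a hyperplane of $\mathscr{SC}$. The crucial point is that the hypothesis on $\mathscr{SC}$ makes available two local surgery moves on such diagrams: the \emph{hexagonal move}, which exchanges three squares sharing a vertex for the three opposite squares of a would-be $3$-cube, permitted by condition~(iv); and the \emph{reduction}, which erases two copies of a square glued along a pair of consecutive edges, permitted because conditions~(ii) and~(iii) force such a pair of squares to coincide. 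Hexagonal moves preserve the area and the boundary label, while each reduction decreases the area by exactly $2$ and preserves the boundary label.

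Next, I would observe that either hypothesis on $\Delta$ --- a dual curve failing to be an embedded segment, or two dual curves meeting at two distinct points --- singles out a pair of ``bad'' intersections of dual curves that together bound a sub-disc $B \subset \Delta$ whose boundary consists of four dual-curve arcs. The standard argument is to push $B$, by a finite sequence of hexagonal moves, into the minimal configuration: a bigon formed by two squares identified along two consecutive edges. A single reduction then deletes these two squares, yielding a new van Kampen diagram $\Delta'$ with boundary still labelled by $w$ and with $\mathrm{area}(\Delta') = \mathrm{area}(\Delta) - 2$. This is precisely the inductive step behind the minimal-area analysis of \cite[Section~2.e]{MR4298722}, of which \cite[Corollary~2.4]{MR4298722} is a direct corollary.

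The main obstacle is verifying that the sub-disc $B$ can actually be reduced to the two-square bigon by hexagonal moves alone, without introducing squares outside of $B$. This is guaranteed by the combinatorics of dual curves in a square complex satisfying the conditions of Theorem~\ref{thm:CriterionCC}: intersections between dual curves can be transported freely across hexagonal moves, and the parity of the intersection pattern along any sub-disc is invariant, so a sub-disc bounded by two dual-curve arcs and two intersection points collapses to its minimal bigon form. Once this local shrinking step is invoked from \cite[Section~2]{MR4298722}, the conclusion of the proposition follows immediately, and the quantitative bound $\mathrm{area}(\Delta') \leq \mathrm{area}(\Delta) - 2$ is witnessed by the single reduction applied at the end of the procedure.
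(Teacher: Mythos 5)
Your proposal is correct and follows essentially the same route as the paper: the paper gives no independent argument, deriving the proposition directly from the disc-diagram analysis of \cite[Corollary~2.4]{MR4298722}, i.e.\ exactly the hexagonal-move-plus-reduction machinery (enabled by conditions (ii)--(iv) of Theorem~\ref{thm:CriterionCC}) that you sketch and then invoke for the key shrinking step. The only imprecisions are cosmetic (a bigon is bounded by two dual-curve arcs, not four, and the case of an embedded closed dual curve reduces to the bigon case via a second dual curve crossing it twice), and they are covered by the cited reference just as in the paper.
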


\noindent
We are finally ready to describe how to solve the conjugacy problem in cactus groups. In our next statement, a \emph{simple-conjugation} applied to a word $w=\ell_1 \cdots \ell_r$ refers to the following operation: given $1 \leq a < b \leq n$ such that the permutation of $\{1, \ldots, n\}$ associated to $w$ stabilises $[a,b]$, replace $w$ with the word obtained from $s_{a,b}w$ by shifting $s_{a,b}$ all the way to the end of $w$ and by removing the final letter $s_{a,b}$. Observe that the word thus obtained equals $s_{a,b}ws_{a,b}^{-1}$ in the cactus group and has the same length as $w$. Figure~\ref{SimpleConj} gives an example of two diagrams related by a simple-conjugation.
\begin{figure}[h!]
\begin{center}
\includegraphics[width=0.6\linewidth]{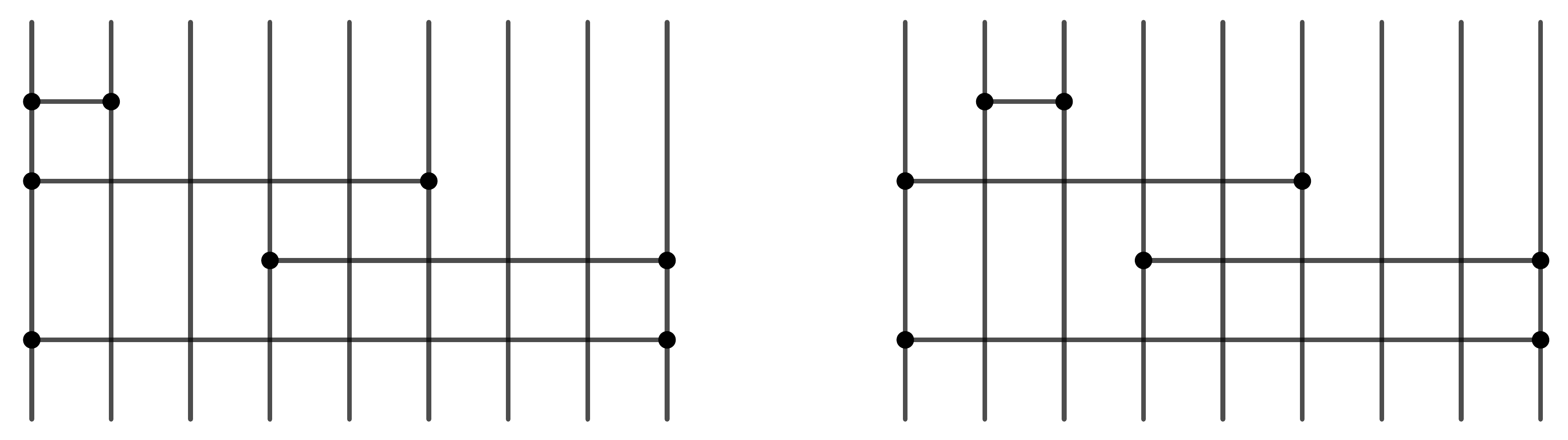}
\caption{Two diagrams conjugate by $s_{1,3}$. }
\label{SimpleConj}
\end{center}
\end{figure}

\noindent
We also refer to a \emph{cyclically irreducible} word $w$ as an irreducible word that does not contain two letters $\ell,\ell'$ such that $\ell$ can be shifted all the way to the beginning of the word, becoming $\ell''$ in the process, and such that $\ell'$ can be shifted all the way to the end of the word, becoming $\ell''$ in the process. Notice that, if there exist such letters $\ell$ and $\ell'$, then we can obtain from $w$ by mock commutations a word of the form $\ell' w' \ell'$. Then $w'$ is shorter that $w$ but remains in the same conjugacy class in the cactus group. Thus, in order to solve the conjugacy problem in the whole group, it suffices to determine when two given cyclically irreducible words represent the same element up to conjugacy.

\begin{thm}
Let $n \geq 2$ be an integer. Two cyclically irreducible words of generators $w_1$ and $w_2$ represent conjugate elements in the cactus group $J_n$ if and only if there exist $w_1'$ and $w_2'$ obtained from $w_1$ and $w_2$ by mock commutations such that a cyclic shift of $w_2'$ can be obtained from $w_1'$ by simple-conjugations.
\end{thm}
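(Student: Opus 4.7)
The forward direction is immediate: mock commutations represent the identity, a cyclic shift $\ell_1\ell_2\cdots\ell_r \mapsto \ell_2\cdots\ell_r\ell_1$ is conjugation by $\ell_1$, and by construction a simple-conjugation is conjugation by a generator. Only the converse requires work.

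Assume $w_1$ and $w_2$ are cyclically irreducible and represent conjugate elements of $J_n$. The plan is to exploit an annular diagram $D$ over the canonical presentation with inner boundary labelled by $w_1$ and outer boundary by $w_2$, chosen of minimal area. The squares of $D$ come from commutation and mock-commutation relations, so we may speak of dual curves in $D$. Applying the disc-diagram analysis underlying Proposition~\ref{prop:AnnularDiag} to $D$ (equivalently, cutting $D$ along a short radial edge-path to obtain a van Kampen disk and invoking the proposition there), we ensure that dual curves in $D$ are embedded and that any two of them meet at most once.

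I then partition the dual curves of $D$ into three types: (i) both endpoints on the same boundary cycle, (ii) one endpoint on each boundary cycle, (iii) closed. Cases (i) and (iii) are excluded. For (i), a dual curve with both endpoints on $\partial_{\text{inn}}D$ cobounds with an arc of $\partial_{\text{inn}}D$ a van Kampen subdiagram $D'$, and following the curve across $D'$ produces, after applying to $w_1$ the mock commutations dictated by $D'$, a subword of (a cyclic shift of) $w_1$ of the form $s_{a,b}\cdots s_{a,b}$ which can be cancelled, contradicting cyclic irreducibility; the outer side is symmetric. For (iii), a contractible closed dual curve bounds a disk subdiagram that can be removed to lower the area, while an essential closed curve cuts $D$ into two annular subdiagrams yielding a conjugation through a strictly shorter annulus, again contradicting minimality.

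It remains that all dual curves of $D$ cross from inner to outer, foliating $D$ into strips. I plan to interpret each strip as one simple-conjugation: a strip whose inner-boundary edge is labelled $s_{a,b}$ records the passage of $s_{a,b}$ through the word along the strip, governed by commutations and mock commutations, and the fact that the unique dual curve in the strip returns to the label $s_{a,b}$ at the outer boundary forces the permutation of the intermediate word to stabilise $[a,b]$, giving exactly a simple-conjugation. Reading $D$ strip by strip from a chosen radial arc then produces $w_2$ from $w_1$ by a sequence of simple-conjugations, interlaced with mock commutations within each strip and one global cyclic shift aligning the two basepoints. The main obstacle is handling case (i) rigorously: since a dual curve can change labels several times along its path, the promised cancellation in $w_1$ only materialises after a carefully tracked sequence of mock commutations, and one must verify that cyclic irreducibility still prohibits the resulting reduction.
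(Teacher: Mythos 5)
The structural analysis of your annular diagram goes wrong at two related points. First, the exclusion of essential closed dual curves is incorrect: a closed dual curve separating the two boundary circles cannot be removed, and cutting along it yields no contradiction with minimality (the two annular pieces do not have the boundary labels you started with, and no smaller diagram labelled by $(w_1,w_2)$ is produced). In fact such curves must occur whenever $w_2$ is not simply a cyclic shift of a mock-commutation image of $w_1$: every square of the diagram is crossed by exactly two dual curves which cross \emph{inside} it, so if all dual curves were pairwise non-crossing boundary-to-boundary arcs bounding your ``strips'', the diagram would contain no squares at all. In the paper's argument the essential circles are precisely what encode the simple-conjugations: the minimal diagram is shown to be combinatorially a product of a subdivided circle with a subdivided interval, the concentric vertex-circles carry the intermediate words $m_1,\ldots,m_r$, and crossing one ring of squares (the carrier of one circular dual curve), whose radial edges are labelled by the conjugating generator, is one simple-conjugation; the ``return to the same label'' that forces the permutation to stabilise $[a,b]$ comes from travelling once around the annulus back to the same radial edge, not from a radial strip. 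Having discarded the circular dual curves, your proposal has no mechanism left to produce the simple-conjugations.

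Second, ``any two dual curves meet at most once'' is not enough: for a product (or strip) structure you need two boundary-to-boundary dual curves to be \emph{disjoint}, and a single crossing of two such arcs cannot be eliminated by the bigon reduction of Proposition~\ref{prop:AnnularDiag}. The paper removes it by gluing an extra square at the cusp where the two arcs emanate from adjacent boundary edges; this performs a mock commutation on the boundary word, raises the area by one, creates a bigon, and then reduces by two, contradicting minimality only because the area is minimised over all boundary words $w_1',w_2'$ obtained from $w_1,w_2$ by mock commutations. Your set-up fixes the boundary labels as $w_1,w_2$, so this move is unavailable, and it is exactly this point that explains why the statement involves $w_1'$ and $w_2'$ at all. (Your treatment of arcs returning to the same boundary component is in the right spirit; the paper does it more cleanly by labelling dual curves with sets of strands and using that transverse curves carry disjoint or nested labels, but that part is not the essential gap.)
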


\begin{proof}
Let $w_1$ and $w_2$ be two irreducible words. Assume that they represent conjugate elements in $J_n$. According to Proposition~\ref{prop:AnnularDiag}, for all words $w_1',w_2'$ obtained from $w_1,w_2$ by mock commutation, there exists an annular diagram $\Delta$ whose boundary is labelled by $(w_1',w_2')$. We choose $w_1'$, $w_2'$, and $\Delta$ in order to minimise the area of $\Delta$. 

\begin{claim}
Dual curves in $\Delta$ do not self-intersect.
\end{claim}

\noindent
Each edge of $\Delta$ is labelled by a generator. Even though two opposite edges in a square may be labelled by distinct generators, the two generators inverse the same set of strands. This allows us to label the dual curves of $\Delta$ by sets of strands. Notice that, in a square, the two pairs of opposite edges are labelled by properly nested or disjoint sets of strands. This implies that:

\begin{fact}\label{fact:TransverseCurves}
In $\Delta$, transverse dual curves are labelled by properly nested or disjoint sets of strands. 
\end{fact}

\noindent
As a consequence, a dual curve cannot self-intersect. 

\begin{claim}
There exist only two types of dual curves in $\Delta$: embedded segments connecting the inner and outer boundaries, and embedded circles separating the inner and outer boundaries. 
\end{claim}

\noindent
Because dual curves do not self-intersect, they must be embedded segments or embedded circles. It remains to show that a dual curve cannot cross twice the inner or outer boundary and that a circular dual curve must separate the inner and outer boundaries.

\medskip \noindent
The latter assertion is rather straightforward. A circular dual curve that does not separate the inner and outer boundaries delimits a subdisc, which we can think about as a van Kampen diagram $\Omega$. According to Proposition~\ref{prop:AnnularDiag}, we can find another van Kampen diagram $\Omega'$ with the same boundary but with smaller area. Replacing $\Omega$ in $\Delta$ with $\Omega'$ shortens the area of $\Delta$, which is impossible.

\medskip \noindent
Next, assume for contradiction that there exists dual curve $\alpha$ crossing the inner or outer boundary twice, say along the edges $e_1,e_2$. We choose $e_1$ and $e_2$ as close as possible, which implies that no dual curve crosses the boundary between $e_1$ and $e_2$. As a consequence, the dual curves crossing the boundary between $e_1$ and $e_2$ intersect $\alpha$. It follows from Fact~\ref{fact:TransverseCurves} that the set of strands labelling $e_1,e_2$ is nested or disjoint with all the sets of strands labelling the edges of the boundary lying between $e_1$ and $e_2$. This implies that, in the (cyclic) word labelling the boundary, the letter given by $e_1$ can be mock commuted all the way to the letter given by $e_2$ and in the process becomes identical to the latter. This contradicts the fact that $w_1$ and $w_2$ are cyclically irreducible. 

\begin{claim}\label{claim:SameType}
Two dual curves of the same type cannot intersect. 
\end{claim}

\noindent
Assume for contradiction that there exist two transverse dual curves $\alpha,\beta$ that connect the inner and outer boundaries. Then there a subdisc delimited by $\alpha$, $\beta$, and a subsegment $\gamma$ of the outer boundary. Because every dual curve intersecting $\gamma$ has to cross $\alpha$ or $\beta$, we can choose $\alpha$ and $\beta$ so that $\gamma$ is reduced to a single vertex. Let $e_1,e_2$ denote the adjacent edges crossed by $\alpha,\beta$. Because $\alpha$ and $\beta$ are transverse, the generators labelling $e_1$ and $e_2$ conjugacy-commute. Therefore, we can glue a new square to $\Delta$ along $e_1 \cup e_2$. The new annular diagram $\Delta'$ has its outer boundary labelled by a word obtained from $w_1'$ by applying a mock commutation; its area is $\mathrm{area}(\Delta)+1$, but its contains a subdisc with two dual curves intersecting twice, so its area can be shortened by two according to Proposition~\ref{prop:AnnularDiag}. This contradicts the minimality of the area of $\Delta$.
\begin{center}
\includegraphics[width=0.7\linewidth]{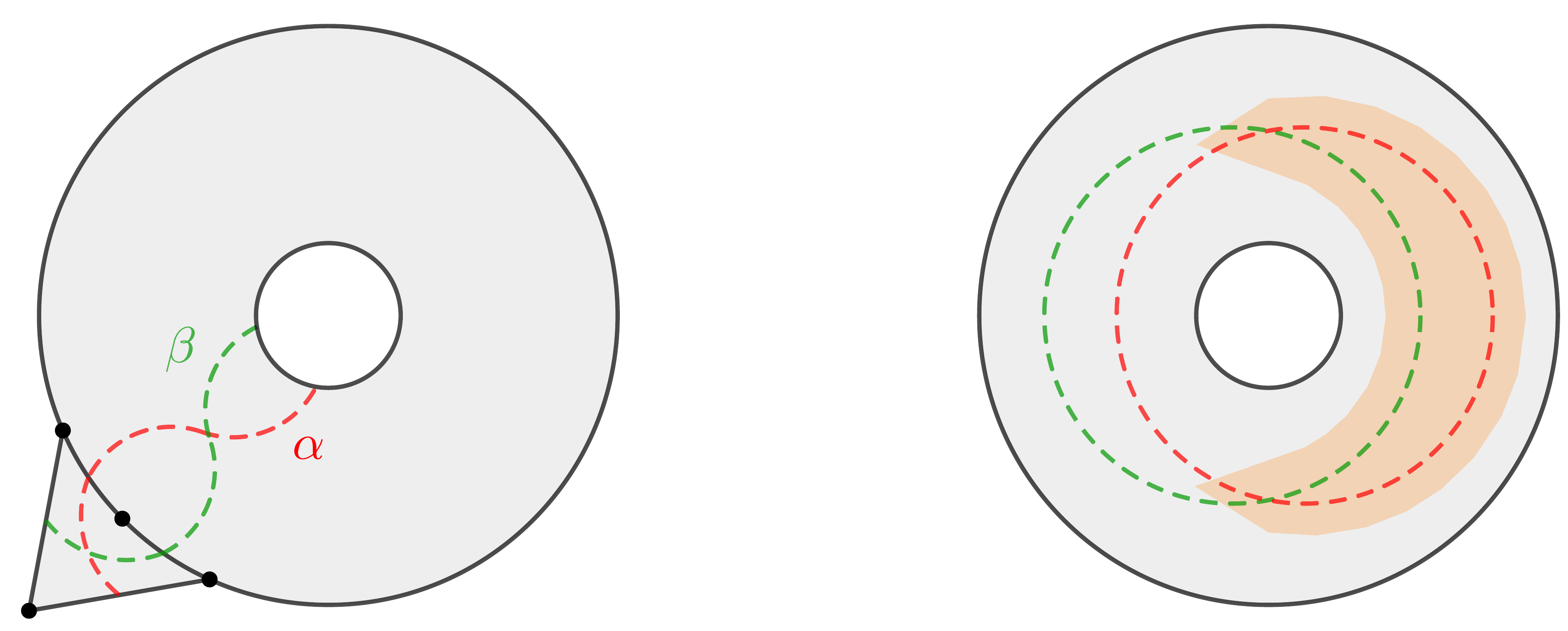}
\end{center}

\noindent
Next, assume that two circular dual curves intersect. If so, they must intersect at least twice. But then we can find a subdisc with two intersecting dual curves, so it follows from Proposition~\ref{prop:AnnularDiag} that the area of this subdisc (and a fortiori of $\Delta$) can be shortened. Again, this contradicts the minimality of the area of $\Delta$. This concludes the proof of Claim~\ref{claim:SameType}. 

\medskip \noindent
Because $\Delta$ coincides with the complex dual to the net of its dual curves, it follows from our previous observations that $\Delta$ is combinatorially a product of a (subdivided) circle with a (subdivided) interval. Let $C_1, \ldots, C_r$ denote the concentric circles contained in $\Delta$, starting from the outer boundary $C_1$ and ending with the inner boundary $C_r$. Fixing on each such circle the projection of the basepoint of $C_1$, we obtain a sequence of words $m_1, \ldots, m_r$ labelling $C_1, \ldots, C_r$. Notice that $m_1=w_1'$ and that $m_r$ is a cyclic shift of $w_2'$. In order to conclude, it suffices to show that $m_i$ is obtained from $m_{i-1}$ by a simple-conjugation for every $2 \leq i \leq r$. 

\medskip \noindent
Fix an index $2 \leq i \leq r$ and let $s$ be the generator labelling the edge $e$ which connects the two basepoints of $C_i$ and $C_{i-1}$. Let $\kappa_j$ denote the path that starts at the basepoint of $C_i$, follows $C_i$ along $j$ edges, crosses an edge connecting $C_i$ and $C_{i-1}$, and follows $C_{i-1}$ until the basepoint of $C_{i-1}$. Notice that $\kappa_0$ is labelled by $sm_{i-1}$, that $\kappa_{|m_i|}$ is labelled by $m_is$, and that the sequence of words labelling the $\kappa_j$ amounts to shifting $s$ all the way to the end of $m_{i-1}$ in $sm_{i-1}$. During the process, the letter $s$ can vary but it ends as $s$ again, which amounts to saying that the interval associated to the generator $s$ is fixed by the permutation associated to $m_{i-1}$. Thus, we have proved that $m_i$ is obtained from $m_{i-1}$ by a simple-conjugation, as desired. 
\end{proof}

\section{Actions of the pure cactus groups}\label{section:Pure}

\noindent
Admitting a proper and cocompact action on a median graph already provides a lot of valuable information. As shown in \cite{MR2377497} (see also \cite{QMspecial}), when the action avoids some pathological configurations regarding (orbits of) hyperplanes, it is possible to restrict even further the structure of the group under consideration. 

\begin{definition}
Let $G$ be a group acting on a median graph $X$.
\begin{itemize}
	\item A \emph{self-intersection} is the data of a hyperplane $H$ of $X$ and of an element $g \in G$ such that $H$ and $gH$ are transverse.
	\item A \emph{self-osculation} is the data of a hyperplane $H$ of $X$ and of an element $g \in G$ such that $H$ and $gH$ are tangent.
	\item An \emph{inter-osculation} is the data of two hyperplanes $H_1,H_2$ of $X$ and of an element $g \in G$ such that $H_1$ is transverse to $H_2$ and $gH_1$ tangent to $H_2$.
\end{itemize}
\end{definition}

\noindent
The motivation for this definition is the following. Let $G$ be a group acting on a median graph $X$. We fix a vertex $o \in X$ and we denote by $\Gamma$ the graph whose vertices are the $G$-orbits of hyperplanes in $X$ and whose edges connect two orbits whenever they contain two transverse hyperplanes. Consider the map from $G$ to the right-angled Coxeter group $C(\Gamma)$ defined by
$$\Omega : g \mapsto \text{word of the (orbits of) hyperplanes crossing a geodesic } [o,go].$$
It can be shown that $\Omega$ is well-defined, i.e.\ the element of $C(\Gamma)$ we get does not depend on the geodesic $[o,go]$ we choose. Moreover, $\Omega$ is a morphism. However, this morphism may be trivial, some restriction are needed in order to make the morphism interesting. As shown in \cite{MR2377497} (see also \cite{QMspecial}), if the action of $G$ on $X$ is faithfull, has no self-intersection, no self-osculation, and no inter-osculation, then the morphism $\Omega$ is injective. 

\medskip \noindent
The action of a cactus group $J_n$ on its median graph $\mathscr{C}_n$ does not satisfy these conditions. In fact, $J_n$ does not admit such an action on any median graph. Indeed, $J_n$ (for $n \geq 4$) cannot be embedded into a right-angled Coxeter group since it contains a non-abelian finite subgroup (namely $(\mathbb{Z}/2\mathbb{Z} \oplus \mathbb{Z}/2 \mathbb{Z} ) \rtimes \mathbb{Z} / 2 \mathbb{Z}$ where the right factor $\mathbb{Z}/2 \mathbb{Z}$ acts on the direct sum by permuting the two factors). Nevertheless:

\begin{thm}\label{thm:conspicial}
Let $n \geq 2$. The action of the pure cactus group $PJ_n$ on $\mathscr{C}_n$ has no self-intersection, no self-osculation, no inter-osculation, and has trivial cube-stabilisers.
\end{thm}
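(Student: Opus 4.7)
The plan is to attach to each hyperplane of $\mathscr{C}_n$ a natural combinatorial invariant that is preserved by the $PJ_n$-action, and then to translate each of the four conclusions of the theorem into a condition on this invariant.

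First I would associate to every edge $\{g, gs_{p,q}\}$ of $\mathscr{C}_n$ the subset $\Sigma(g)([p,q]) \subseteq \{1,\ldots,n\}$, intuitively recording the strands (indexed by their top endpoints) that are pinched by the node $s_{p,q}$ in the braided-like picture of $g$. Using the explicit classification of squares from Corollary~\ref{cor:Cycles}, I would check that opposite edges of a square carry the same subset: in the disjoint case $\Sigma(s_{m,r})$ fixes $[p,q]$ pointwise, and in the nested case the direct computation $\Sigma(s_{p,q})([m,r]) = [p+q-r, p+q-m]$ does the job. This promotes the data into a well-defined invariant $A(H) \subseteq \{1,\ldots,n\}$, of cardinality at least two, attached to each hyperplane $H$. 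Because $\Sigma$ vanishes on $PJ_n$, the identity $\Sigma(gv) = \Sigma(v)$ immediately yields $A(gH) = A(H)$ for every $g \in PJ_n$.

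Next I would translate the three osculation conditions into set-theoretic relations between top-labels sets. By Corollary~\ref{cor:Cycles}, two hyperplanes sharing incident edges at a common vertex $v$ span a square precisely when their intervals at $v$ are disjoint or properly nested (equivalently, via $\Sigma(v)$, when their top-labels sets are), and they are tangent at $v$ exactly when those sets properly overlap. A sharper observation is that two distinct hyperplanes with the same top-labels set share no vertex in their carriers at all: the two incident edges at such a vertex would both be labelled by the generator $s_{p,q}$ with $[p,q] = \Sigma(v)^{-1}(A)$ and would thus coincide. Combining these, if $g \in PJ_n$ and $gH \neq H$ then the carriers of $H$ and $gH$ are disjoint, ruling out both self-intersection and self-osculation; and if $H_1, H_2$ are transverse then $A(H_1), A(H_2)$ are disjoint or properly nested, hence so are $A(gH_1) = A(H_1)$ and $A(H_2)$, preventing $gH_1$ from being tangent to $H_2$.

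For trivial cube-stabilisers, I would consider $g \in PJ_n$ stabilising a cube $C$ with crossing hyperplanes $H_1, \ldots, H_k$. These $H_i$ are pairwise transverse and have pairwise distinct top-labels sets, so $A(gH_i) = A(H_i)$ together with the fact that $gH_i$ crosses $C$ forces $gH_i = H_i$ for each $i$. The subtle remaining point, which I expect to be the main content of this part, is to upgrade setwise stabilisation into preservation of both halfspaces of each $H_i$. The idea is to orient every edge of $\mathscr{C}_n$ from $v$ to $vs$ and verify that opposite edges of any square from Corollary~\ref{cor:Cycles} carry compatible orientations; in the nested case the two parallel edges of the $s_{p,q}$-hyperplane share their label while those of the other hyperplane carry distinct labels $s_{m,r}$ and $s_{p+q-r, p+q-m}$, yet all of them manifestly cross their hyperplane in the same direction. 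This endows each hyperplane with a canonical orientation preserved by the left-multiplication action of $J_n$, so $g$ fixes both halfspaces of every $H_i$, hence fixes every vertex of $C$; freeness of the $J_n$-action on the vertices of its Cayley graph then yields $g = 1$.
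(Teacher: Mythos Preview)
Your treatment of the first three properties is correct and matches the paper's approach: you build the strand-set label $A(H)$ on hyperplanes (the paper's Lemma~\ref{lem:LabelsHyp}), observe that $PJ_n$ preserves it, and then read off the absence of self-intersection, self-osculation, and inter-osculation from the dictionary between transversality/tangency and the disjoint/nested relation on labels (the paper's Lemma~\ref{lem:HypTransverseLabel}). Your extra remark that two hyperplanes with identical label cannot even share a carrier vertex is a clean way to dispatch self-osculation.

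The cube-stabiliser argument, however, has a genuine gap. You propose to ``orient every edge from $v$ to $vs$'' and obtain a canonical orientation of each hyperplane preserved by the left $J_n$-action. But since every generator satisfies $s_{p,q}^2=1$, each undirected edge $\{v,vs_{p,q}\}$ is simultaneously of the form $(v,vs_{p,q})$ and $(vs_{p,q},(vs_{p,q})s_{p,q})$; there is no preferred direction. More decisively, Lemma~\ref{lem:Reflection} shows that each generator $s_{p,q}$ with $q-p+1=2$ acts on $\mathscr{C}_n$ as a genuine reflection along its hyperplane, hence \emph{swaps} the two halfspaces. So there is no $J_n$-equivariant orientation of hyperplanes, and your reduction ``$g$ stabilises every $H_i$, hence fixes both halfspaces of every $H_i$, hence fixes every vertex of $C$'' does not go through as written. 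Showing that a $PJ_n$-element stabilising $H_i$ cannot swap its halfspaces is precisely the content that remains to be supplied, and it is not lighter than the original problem.

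The paper avoids this obstacle altogether: after conjugating so that $1\in C$, it notes that $g$ itself is a vertex of $C$, writes $\Sigma(g)=\sigma_r\cdots\sigma_1$ where each $\sigma_i$ is the central inversion of the label $S_i$ of a hyperplane separating $1$ from $g$ (with the $S_i$ ordered so that proper containment only goes one way), and then tracks the leftmost element $a\in S_1$ through the composition to see that $\Sigma(g)(a)\neq a$. This directly shows $g\notin PJ_n$ without ever needing to orient hyperplanes.
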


\noindent
In order to prove the theorem, we need to labels hyperplanes in our median graphs. Given an $n \geq 2$, an oriented edge $e$ in $\mathscr{C}_n$ can be uniquely written as $(g,gs_{i,j})$ for some $g \in J_n$ and $1 \leq i < j \leq n$. Roughly speaking, we start with a braided-like picture $g$ and we braid some of its strands by applying $s_{i,j}$ at the bottom of $g$. We would like to label $e$ with the set of the strands of $g$ which are braided by $s_{i,j}$. Formally, this is the subset $\Sigma(g)([i,j])$ of $\{1, \ldots, n\}$. Observe that the inverse $-e$ of $e$ has the same label. Indeed, $-e$ can be written as $(gs_{i,j},gs_{i,j} \cdot s_{i,j})$ so its label is 
$$\Sigma(gs_{i,j})([i,j]) = \Sigma(g)\left( \Sigma(s_{i,j}) ([i,j]) \right) = \Sigma(g)([i,j]),$$
since $\Sigma(s_{i,j})$ stabilises $[i,j]$. Thus, our labelling actually labels the (unoriented) edges of $\mathscr{C}_n$. The key observation is that this labelling extends to a labelling of the hyperplanes.

\begin{lemma}\label{lem:LabelsHyp}
Let $n \geq 2$. The edges of a given hyperplane are all labelled by the same set of strands. Moreover, if $H$ is a hyperplane labelled by a set of strands $S$ and if $g \in J_n$ is an element, then $gH$ is labelled by $\Sigma(g)(S)$. 
\end{lemma}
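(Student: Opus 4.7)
My plan is to reduce the first assertion to a local check at each 4-cycle. By definition, a hyperplane is an equivalence class of edges under the transitive closure of the relation ``opposite sides of a 4-cycle,'' so if I can show that any two opposite edges in a 4-cycle of $\mathscr{C}_n$ carry the same label, then all edges of a given hyperplane will carry the same label. This reduction is precisely where Corollary~\ref{cor:Cycles} becomes useful, since it gives an explicit description of all 4-cycles.

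To carry this out, I will take an arbitrary 4-cycle of the form $(g, gs_{p,q}, gs_{p,q}s_{m,r}, gs_{p,q}s_{m,r}s_{p,q})$ and compute the labels of both pairs of opposite edges, using the fact that $\Sigma(s_{i,j})$ is the permutation that reverses $[i,j]$ pointwise and fixes its complement. The two elementary observations that drive the computation are: (i) when $[p,q]\cap[m,r]=\emptyset$, each of $\Sigma(s_{p,q})$ and $\Sigma(s_{m,r})$ stabilises the other interval pointwise; and (ii) when $[m,r]\subset[p,q]$, the map $\Sigma(s_{p,q})$ sends $[m,r]$ bijectively onto $[p+q-r,p+q-m]$, which is exactly the interval appearing on the other side of the mock commutation. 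Applying these, one computes that the two parallel $s_{p,q}$-edges both carry the label $\Sigma(g)([p,q])$, while the remaining pair carries $\Sigma(g)([m,r])$ in the disjoint case and $\Sigma(g)([m,r])=\Sigma(g)\Sigma(s_{p,q})^{-1}([p+q-r,p+q-m])$ matched against $\Sigma(g)([p+q-r,p+q-m])$ in the nested case; in this last situation I must be a little careful about the fact that the two parallel edges involve \emph{different} generators, $s_{m,r}$ and $s_{p+q-r,p+q-m}$, and check that the computation of their labels still agrees.

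For the second, equivariance, part of the lemma, I will use that $J_n$ acts on $\mathscr{C}_n$ by left multiplication, which sends an edge $(h,hs_{i,j})$ to $(gh,ghs_{i,j})$. The definition then gives
\[
\text{label}(gh,ghs_{i,j}) = \Sigma(gh)([i,j]) = \Sigma(g)\bigl(\Sigma(h)([i,j])\bigr) = \Sigma(g)\bigl(\text{label}(h,hs_{i,j})\bigr),
\]
so every edge of $gH$ carries the label $\Sigma(g)(S)$. Since the first part of the lemma ensures that this common label is precisely the label of $gH$ as a hyperplane, the conclusion follows.

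I do not expect any real obstacle beyond the case analysis in the nested 4-cycle, where the two pairs of opposite edges are not labelled by the same generator and one must verify that $\Sigma(s_{p,q})$ really does intertwine the intervals $[m,r]$ and $[p+q-r,p+q-m]$. Everything else is routine once the hyperplane is recognised as the transitive closure of the ``opposite sides of a 4-cycle'' relation.
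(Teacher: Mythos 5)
Your proposal is correct and follows essentially the same route as the paper: reduce the first assertion via Corollary~\ref{cor:Cycles} to a label check on the two types of $4$-cycles, and obtain equivariance from $\Sigma(gh)([i,j])=\Sigma(g)\bigl(\Sigma(h)([i,j])\bigr)$; the paper simply proves the equivariance first and uses it to translate the $4$-cycles to the basepoint $1$, whereas you check them with a general $g$, which is equivalent. One small correction in your nested case: the $s_{m,r}$-edge is based at $gs_{p,q}$, so its label is $\Sigma(gs_{p,q})([m,r])=\Sigma(g)([p+q-r,p+q-m])$, not $\Sigma(g)([m,r])$ (in general a different set); with this fixed, both edges of the remaining pair carry $\Sigma(g)([p+q-r,p+q-m])$, and your observation (ii) is exactly what makes the two computations agree.
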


\begin{proof}
Let $H$ be a hyperplane and $g \in J_n$ an element. Fix an edge $e$ in $H$. It can be written as $(h,hs_{i,j})$ for some $h \in J_n$ and $1 \leq i < j \leq n$. So $H$ is labelled by the set of strands $S:=\Sigma(h)([i,j])$. Because the edge $(gh,ghs_{i,j})$ belongs to $gH$, it follows that $gH$ is labelled by the set of strands
$$\Sigma(gh)([i,j]) = \Sigma(g) \left( \Sigma(h)([i,j]) \right) = \Sigma(g)(S),$$
as desired. This proves the second assertion of our lemma. 

\medskip \noindent
In order to justify the first assertion of our lemma, it suffices to notice that two opposite edges in a $4$-cycle are labelled by the same set of strands. Corollary~\ref{cor:Cycles} describes the $4$-cycles in $\mathscr{C}_n$ and it follows from the first paragraph of the proof that we can choose representatives of our $4$-cycles modulo $J_n$-translations. Consequently, it suffices to verify our assertion on the $4$-cycles of the following forms:
\begin{itemize}
	\item $(1, s_{p,q}, s_{p,q} s_{m,r}, s_{p,q} s_{m,r} s_{p,q}, s_{p,q}s_{m,r}s_{p,q} s_{m,r}=1)$ for some $1 \leq p < q \leq n$ and $1 \leq m < r \leq n$ satisfying $[p,q] \cap [m,r]= \emptyset$;
	\item $(1, s_{p,q}, s_{p,q}s_{m,r}, s_{p,q}s_{m,r}s_{p,q}, s_{p,q}s_{m,r} s_{p,q} s_{p+q-r,p+q-m}=1)$ for some $1 \leq p < q \leq n$ and $1 \leq m < r \leq n$ satisfying $[m,r] \subset [p,q]$. 
\end{itemize}
In the first case, the edges $(1,s_{p,q})$ and $(s_{m,r},s_{m,r}s_{p,q})$ are both labelled by $[p,q]$, and the edges $(1,s_{m,r})$ and $(s_{p,q},s_{m,r})$ are both labelled by $[m,r]$. In the second case, the edges $(1,s_{p,q})$ and $(s_{m,r}, s_{m,r}s_{p,q})$ are both labelled by $[p,q]$, and the edges $(1,s_{m,r})$ and $(s_{p,q},s_{p,q}s_{p+q-m,p+q-r})$ are both labelled by $[m,r]$. 
\end{proof}

\noindent
The following observation will be also needed in order to prove Theorem~\ref{thm:conspicial}. 

\begin{lemma}\label{lem:HypTransverseLabel}
Let $o \in \mathscr{C}_n$ be a vertex and $e_1,e_2$ two edges having $o$ as an endpoint. The hyperplanes containing $e_1$ and $e_2$ are transverse if and only if they are labelled by disjoint or properly nested sets of strands.
\end{lemma}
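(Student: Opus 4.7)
The plan is to reduce to the basepoint $o=1$ by exploiting the vertex-transitive action of $J_n$ on its Cayley graph $\mathscr{C}_n$, and then to match the two directions of the equivalence directly against Corollary~\ref{cor:Cycles} and Lemma~\ref{lem:LabelsHyp}. After translation, I can write $e_1=(1,s_{p,q})$ and $e_2=(1,s_{m,r})$ for some $1\leq p<q\leq n$ and $1\leq m<r\leq n$; the labels of the hyperplanes $H_1\ni e_1$ and $H_2\ni e_2$ are then the intervals $[p,q]$ and $[m,r]$, viewed as subsets of $\{1,\ldots,n\}$.

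For the ``if'' direction, if $[p,q]$ and $[m,r]$ are disjoint or one is properly contained in the other, then one of the two configurations in Corollary~\ref{cor:Cycles} applies with $g=1$ and directly supplies a $4$-cycle through $1$ having $e_1$ and $e_2$ as two of its sides. This $4$-cycle witnesses transversality of $H_1$ and $H_2$. For the ``only if'' direction, transversality of $H_1$ and $H_2$ means that they contain some pair of intersecting edges that span a $4$-cycle. By Corollary~\ref{cor:Cycles}, such a $4$-cycle arises from some $g\in J_n$ together with two intervals $[a,b],[c,d]$ which are either disjoint or nested. Lemma~\ref{lem:LabelsHyp} then identifies the labels of $H_1,H_2$, read off from two consecutive sides of this $4$-cycle, with $\Sigma(g)([a,b])$ and $\Sigma(g)([c,d])$. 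Since $\Sigma(g)$ is a permutation of $\{1,\ldots,n\}$, it preserves both disjointness and (proper) containment of subsets, so the labels of $H_1$ and $H_2$ are disjoint or properly nested, as required.

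The main subtlety to keep straight, rather than a genuine obstacle, is that the label of a hyperplane is a priori an arbitrary subset of $\{1,\ldots,n\}$, not necessarily an interval, once we move away from the basepoint (for instance, in the nested case one side of the $4$-cycle carries the reflected interval $[a+b-d,a+b-c]$). Fortunately the two Boolean relations of interest---disjointness and containment---are preserved by any permutation of $\{1,\ldots,n\}$, so this does not interfere with the argument; it only explains why the statement of the lemma must be phrased in terms of ``sets of strands'' rather than ``intervals of strands''.
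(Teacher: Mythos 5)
Your proof is correct and takes essentially the same route as the paper: both directions are read off from the classification of $4$-cycles in Corollary~\ref{cor:Cycles} combined with the hyperplane labelling of Lemma~\ref{lem:LabelsHyp}, and your remarks about reflected intervals and permutation-invariance of disjointness/nesting take care of the only delicate points. The sole (harmless) difference is in the ``only if'' direction: the paper uses the standard median fact that two transverse hyperplanes dual to edges at the common vertex $o$ already span a square at $o$, whereas you take an arbitrary square witnessing transversality and transport the labels back to $e_1,e_2$ via the constancy of labels along hyperplanes.
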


\begin{proof}
Let $H_1$ (resp.\ $H_2$) denote the hyperplane containing $e_1$ (resp.\ $e_2$). Then $H_1$ and $H_2$ are transverse if and only if $e_1$ and $e_2$ span a $4$-cycle. The desired conclusion follows from the description of $4$-cycles in $\mathscr{C}_n$ given by Corollary~\ref{cor:Cycles}.
\end{proof}

\begin{proof}[Proof of Theorem~\ref{thm:conspicial}.]
It follows from Lemma~\ref{lem:LabelsHyp} that $PJ_n$ acts on $\mathscr{C}_n$ by preserving the labels of the hyperplanes. Thus, we deduce from Lemma~\ref{lem:HypTransverseLabel} that the action of $PJ_n$ on $\mathscr{C}_n$ has no self-intersection, no self-osculation, and no inter-osculation. 

\medskip \noindent
Let $g \in J_n$ be a non-trivial element stabilising some cube $C$ in $\mathscr{C}_n$. Up to conjugating $g$ with an element of $J_n$, we can assume without loss of generality that $1$ belongs to $C$, which implies that $g$ (as a vertex) belongs to $C$ as well. Let $S_1, \ldots, S_r$ denote the labels of the hyperplanes separating $1$ and $g$. Because the hyperplanes crossing $C$ are pairwise transverse, we know from Lemma~\ref{lem:HypTransverseLabel} that the $S_i$ are pairwise disjoint or properly nested. Up to re-indexing our collection, we assume that $S_i \subsetneq S_j$ implies $i>j$. Because crossing the hyperplane labelled by $S_i$ amounts to inverting the strands in $S_i$, we can describe $\Sigma(g)$ as the permutation obtained by inverting the strands in $S_1$, next the strands in $S_2$, and so on until $S_r$. We write this decomposition as $\Sigma(g)= \sigma_r \cdots \sigma_1$. 

\medskip \noindent
Let $a \in \{1, \ldots, n\}$ denote the leftmost element in $S_1$. Then $\sigma_1(a)$ is the rightmost element in $S_1$. If $S_2$ does not contain $\sigma_1(a)$, then $\sigma_2\sigma_1(a)= \sigma_1(a) \neq a$. Otherwise, $\sigma_2\sigma_1(a)$ belongs to $S_2$, which must be properly contained in $S_1$. A fortiori, $\sigma_2 \sigma_1(a) \neq a$. We can keep going. If $\sigma_2\sigma_1(a)$ does not belong to $S_2$, then $\sigma_3\sigma_2\sigma_1(a)= \sigma_2\sigma_1(a) \neq a$. Otherwise, $\sigma_3\sigma_2\sigma_1(a)$ belongs to $S_3$, which is properly contained in $S_2$, which implies that $\sigma_3\sigma_2\sigma_1(a) \neq a$. Eventually, we conclude that $\Sigma(g)(a) \neq a$, hence $\Sigma(g) \neq 1$ or equivalently $g \notin PJ_n$. 
\end{proof}

\noindent
As a first application of Theorem~\ref{thm:conspicial}, we recover \cite[Theorem~C]{Paolo}:

\begin{cor}\label{cor:PureTorsionFree}
Pure cactus groups are torsion-free.
\end{cor}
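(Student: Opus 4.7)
The plan is to deduce torsion-freeness of $PJ_n$ directly from the two main outputs of Theorem~\ref{thm:conspicial}, namely that $PJ_n$ acts on the median graph $\mathscr{C}_n$ (Theorem~\ref{thm:CAT}) with trivial cube-stabilisers. The scheme is a standard application of the fixed-point principle for finite group actions on CAT(0) cube complexes.

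Concretely, suppose for contradiction that $g \in PJ_n$ is a non-trivial element of finite order, so that $\langle g \rangle$ is a finite subgroup of $PJ_n$ acting on $\mathscr{C}_n$ by automorphisms of the median graph. The first step is to produce a cube in $\mathscr{C}_n$ stabilised (setwise) by $\langle g \rangle$. One convenient route is to pass to the cube-completion, which is CAT(0) by Theorem~\ref{thm:MedianVScube}, and to invoke the Bruhat--Tits fixed-point theorem: a finite isometric action on a complete CAT(0) space admits a fixed point (obtained, for instance, as the circumcentre of any orbit). Such a fixed point lies in the interior of a unique cube, and this cube is preserved setwise by $\langle g \rangle$. (Alternatively, one can argue purely combinatorially by taking the convex hull of a $\langle g \rangle$-orbit of a vertex, which is a finite median subgraph stable under $\langle g \rangle$, and extracting a $\langle g \rangle$-invariant cube from it; either way the input is only that $\langle g \rangle$ is finite.)

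The second step is immediate: Theorem~\ref{thm:conspicial} asserts that the action of $PJ_n$ on $\mathscr{C}_n$ has trivial cube-stabilisers, so the setwise stabiliser of the cube produced above is trivial. In particular $g=1$, contradicting our choice of $g$.

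I do not anticipate any real obstacle here; both ingredients (existence of an invariant cube for a finite action on a median graph, and triviality of cube-stabilisers for the $PJ_n$-action) are already in place. The only point that deserves a brief mention is to pick one concrete justification of the invariant-cube step, which is why I favour citing the CAT(0) fixed-point theorem through Theorem~\ref{thm:MedianVScube} rather than redoing the combinatorial argument.
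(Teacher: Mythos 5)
Your proposal is correct and follows the paper's argument exactly: a finite-order element stabilises a cube (you justify this via the CAT(0) fixed-point theorem on the cube-completion, which is precisely the standard fact the paper invokes), and triviality of cube-stabilisers from Theorem~\ref{thm:conspicial} then forces the element to be trivial. No differences of substance.
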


\begin{proof}
Because an isometry of finite order in a median graph has to stabilise a cube, the desired conclusion follows from Theorem~\ref{thm:conspicial}. 
\end{proof}

\noindent
As a second application, it follows that pure cactus groups embeds into right-angled Coxeter groups. However, the embedding described above is often not optimal: the right-angled Coxeter group is usually ``too big''. A solution to this problem is to axiomatise the relevant properties of the colouring of the hyperplanes of our median graphs as orbits under the group action.

\begin{thm}\label{thm:Special}
Let $G$ be a group acting on a median graph. Fix colouring $\mathfrak{c}$ of the hyperplanes of $X$ and $\Gamma$ a simple graph whose vertices are the possible colours of a hyperplane. Assume that:
\begin{itemize}
	\item $\mathfrak{c}$ is $G$-equivariant, i.e.\ $\mathfrak{c}(gA)=\mathfrak{c}(A)$ for every $g \in G$ and every hyperplane~$A$.
	\item any two hyperplanes whose carriers intersect are equal if and only if they have the same colours;
	\item any two hyperplanes whose carriers intersect are transverse if and only if their colours are adjacent in $\Gamma$.
\end{itemize}
Given a basepoint $o \in X$, the map  
$$\Omega : g \mapsto \begin{array}{c} \text{word of colours given by the hyperplanes} \\ \text{successively crossed by a path from $o$ to $go$} \end{array}.$$
induces an injective morphism $G \hookrightarrow C(\Gamma)$. 
\end{thm}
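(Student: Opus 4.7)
The plan has three parts. First, to show $\Omega$ is well-defined, I would reduce to showing that two paths from $o$ to $go$ differing by an elementary move give the same element of $C(\Gamma)$. Since the cube completion of $X$ is simply connected (Theorem~\ref{thm:MedianVScube}), it suffices to handle backtrack insertions (an edge followed by its reverse) and square flips (replacing two consecutive sides of a square by the other two). A backtrack contributes $\mathfrak{c}(H)\mathfrak{c}(H) = 1$ in $C(\Gamma)$; a square flip exchanges two consecutive colours $c_1, c_2$, where the bounding hyperplanes are transverse, so by the third condition the colours are adjacent in $\Gamma$ and hence commute. The homomorphism property is then immediate: concatenate a path $\alpha$ from $o$ to $go$ with the $g$-translate of a path $\beta$ from $o$ to $ho$, and use $G$-equivariance of $\mathfrak{c}$ to identify the colours along $g\beta$ with those along $\beta$.

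The heart of the argument is injectivity, which I would reduce to the following claim: for every $g \in G$, the word of colours along any geodesic $\gamma$ from $o$ to $go$ is \emph{reduced} in $C(\Gamma)$. Granting this claim, $\Omega(g) = 1$ forces $\gamma$ to have length zero, i.e.\ $go = o$. A short propagation argument then shows that any $g$ fixing $o$ is trivial: for any edge $e$ at $o$ with hyperplane $H$, the hyperplane $gH$ also has carrier containing $o$ and $\mathfrak{c}(gH) = \mathfrak{c}(H)$, so the second condition gives $gH = H$; since $H$ has a unique edge at $o$, this edge is fixed by $g$, and hence so is its other endpoint. Iterating along the connected graph $X$ shows $g$ acts trivially on $X$, whence $g = 1$ (the action being assumed faithful).

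To prove the reduced-word claim, I would argue by contradiction. If $c_1 \cdots c_n$ is not reduced in $C(\Gamma)$, the deletion condition for right-angled Coxeter groups produces indices $i < j$ with $c_i = c_j$ such that $c_i$ commutes with each intermediate $c_k$ for $i < k < j$. I would then implement the corresponding commutations as square flips on $\gamma$, one at a time: at each stage, the two hyperplanes to be exchanged lie on consecutive edges of the current geodesic, so their carriers meet at the shared vertex, and their colours are adjacent in $\Gamma$; by the third condition they are transverse, so the bounding square exists and the flip produces another geodesic (the set of hyperplanes crossed being preserved). After carrying out all the flips, the edges crossing $H_i$ and $H_j$ are adjacent, so the carriers of $H_i$ and $H_j$ meet at a vertex; since $c_i = c_j$, the second condition forces $H_i = H_j$, contradicting the fact (Theorem~\ref{thm:MedianBig}) that a geodesic crosses each hyperplane at most once.

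The main obstacle is this last step: ensuring that the abstract commutations in $C(\Gamma)$ lift, one by one, to genuine square-flip moves on $\gamma$. At every intermediate stage one must verify that the hyperplanes to be swapped have intersecting carriers before the third condition can be invoked. The key observation is that, along any geodesic, consecutive edges always share a vertex, so the required carrier-intersection hypothesis holds automatically; once this geometric lifting is set up, the two conditions together convert unreducedness of the word into a genuine shortening of the geodesic, yielding the contradiction.
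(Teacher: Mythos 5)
Your proof is correct and follows essentially the same route as the paper's appendix argument: well-definedness and the homomorphism property via backtracks, square flips and $G$-equivariance, and injectivity by showing the colour word along a geodesic $[o,go]$ is $\Gamma$-reduced, realising the commutations one by one as square flips until two edges with equally coloured hyperplanes become adjacent and the second hypothesis forces a contradiction. Your additional propagation step dealing with the stabiliser of $o$ (under faithfulness of the action) makes explicit a point the paper leaves implicit, but otherwise the two arguments coincide.
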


\noindent
We refer to the appendix for a proof of this statement. Now, let us apply this construction to pure cactus groups. Given an $n \geq 2$, we colour each hyperplane of $\mathscr{C}_n$ by its set of strands, and we let $\Gamma$ be the graph whose vertices are the subsets of $\{1, \ldots, n\}$ and whose edges connect any two subsets whenever they are disjoint or nested. The assumptions of Theorem~\ref{thm:Special} are satisfied for the action of $PJ_n$ as a consequence of Lemmas~\ref{lem:LabelsHyp} and~\ref{lem:HypTransverseLabel}. Thus, the map
$$\Omega : g \mapsto \begin{array}{c} \text{word of labels given by the hyperplanes} \\ \text{successively crossed by a path from $1$ to $g$} \end{array}.$$
induces an injective morphism $PJ_n \hookrightarrow C(\Gamma)$. The image of an element can be described more explicitly. Let $g \in PJ_n$ be an arbitrary element and write it as a word of generators $s_{p_1,q_1} \cdots s_{p_r,q_r}$. Then
$$1, s_{p_1,q_1}, s_{p_1,q_1}s_{p_2,q_2}, \ldots, s_{p_1,q_1} \cdots s_{p_r,q_r}$$
defines a path in $\mathscr{C}_n$ from $1$ to $g$. For every $1 \leq i \leq r$, the $i$th edge of this path is labelled by the set of strands
$$\Sigma(s_{p_1,q_1} \cdots s_{p_{i-1},q_{i-1}}) ([p_i,q_i]).$$

\noindent
\begin{minipage}{0.28\linewidth}
\begin{center}
\includegraphics[width=0.65\linewidth]{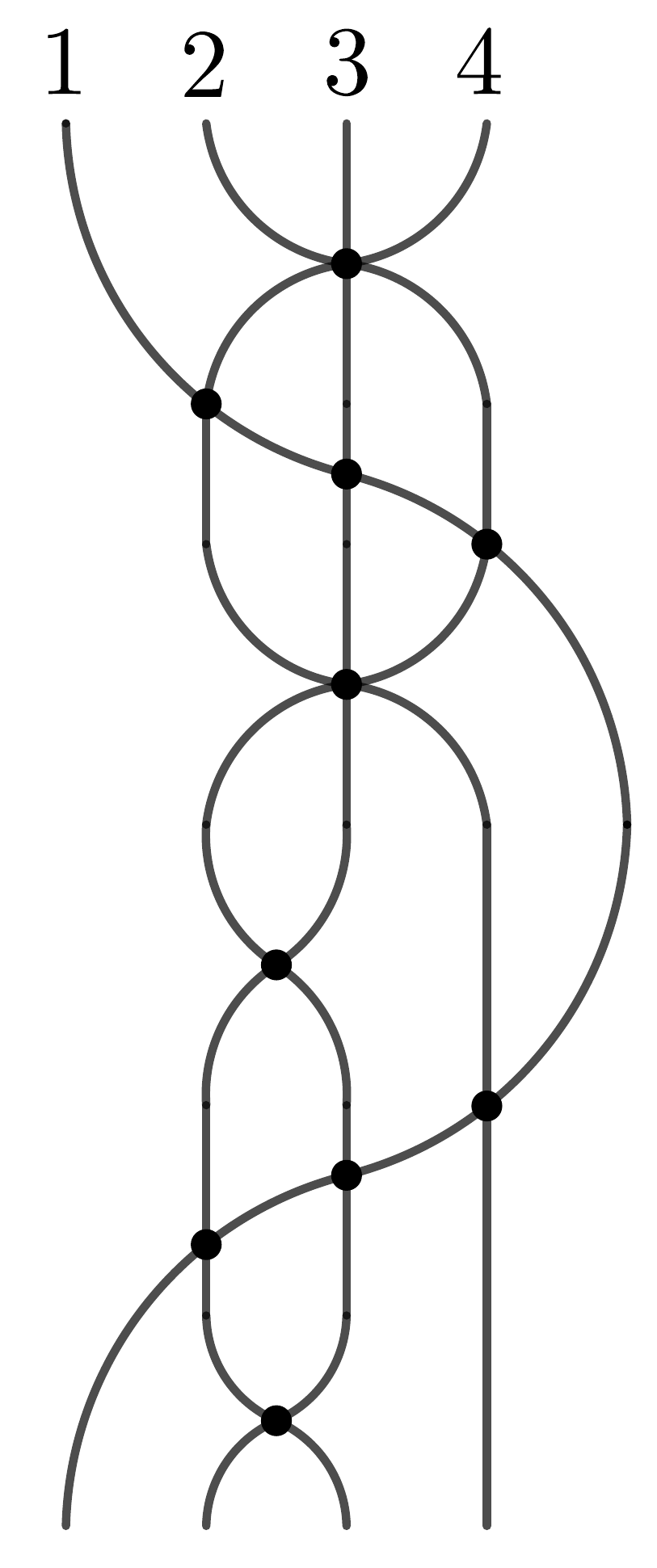}
\end{center}
\end{minipage}
\begin{minipage}{0.7\linewidth}
In other words, each generator $s_{p_i,q_i}$ in the word is sent to the set of the strands which cross the nodes corresponding to $s_{p_i,q_i}$ in the braided-like picture. For instance, the element fo $PJ_4$ illustrated by the braided-like picture on the left is sent to the word of labels $$\{2,3,4\} \{1,4\} \{1,3\} \{1,2\} \{2,3,4\} \{2,3\} \{1,4\} \{ 1,2\} \{ 1,3\} \{2,3\}$$
Interestingly, it seems that our embeddings of pure cactus groups in specific right-angled Coxeter groups coincide with the main construction of \cite{MR3988817}. 
\end{minipage}

\begin{remark}
It is worth noticing that our map $\Omega : PJ_n \to C(\Gamma)$ makes sense on the whole cactus group $J_n$. Even though this extension is no longer a morphism, it remains injective, which can be useful. The injectivity can be obtained by reproducing the proof of Theorem~\ref{thm:Special} given in the appendix word for word. 
\end{remark}

\section{Semidirect product decompositions}

\subsection{From top to bottom}

\noindent
For every $n \geq 2$ and every $S \subset \{2, \ldots, n\}$, let $J_n^S$ denote the group defined by the generators $s_{p,q}$, where $1 \leq p < q \leq n$ satisfy $q-p+1 \in S$, and by the relations:
\begin{itemize}
	\item $s_{p,q}^2=1$ for all $1 \leq p<q \leq n$ satisfying $q-p+1 \in S$;
	\item $s_{p,q}s_{m,r} = s_{m,r}s_{p,q}$  for all $1 \leq p<q \leq n$ and $1 \leq m < r \leq n$ satisfying $q-p+1,r-m+1 \in S$ and $[p,q] \cap [m,r] = \emptyset$;
	\item $s_{p,q}s_{m,r} = s_{p+q-r,p+q-m}s_{p,q}$ for  all $1 \leq p<q \leq n$ and $1 \leq m < r \leq n$ satisfying $q-p+1,r-m+1 \in S$ and $[m,r] \subset [p,q]$.
\end{itemize}
In other words, starting from $J_n$, we restrict the number of strands that can meet at a node in a braided-like picture. Of course, $J_n^{[2,n]}$ coincides with $J_n$. The group $J_n^{\{2\}}$ coincides with the so-called \emph{twin group} or \emph{graph planar group} $T_n$. 

\medskip \noindent
It follows from Proposition~\ref{prop:Normal} that the map sending each generator $s_{p,q}$ of $J_n^S$ to the generator $s_{p,q}$ of $J_n$ induces an injective morphism. In the sequel, we will identify $J_n^S$ with its image in $J_n$. 

\medskip \noindent
These subgroups of cactus groups turn out to be nicely embedded, as motivated by our next statement. Notice, however, that it already follows from Section~\ref{section:Word} that $J_n^S$ isometrically embeds into $J_n$ when both groups are endowed with the word lengths given by their canonical generating sets. 

\begin{prop}\label{prop:ConvexCocompact}
Fix an $n \geq 2$ and a subset $S \subset \{2, \ldots, n\}$. With respect to the action of $J_n$ on $\mathscr{C}_n$, $J_n^S$ is convex-cocompact. In particular, $J_n^S$ is quasi-isometrically embedded in $J_n$. 
\end{prop}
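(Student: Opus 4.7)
The plan is to exhibit a convex subgraph of $\mathscr{C}_n$ on which $J_n^S$ acts cocompactly. I would take $Y \subset \mathscr{C}_n$ to be the subgraph induced on the vertex subset $J_n^S \subseteq J_n$. For $g \in J_n^S$ and $1 \leq p < q \leq n$, the vertex $g s_{p,q}$ lies in $J_n^S$ exactly when $s_{p,q} \in J_n^S$, that is, when $q-p+1 \in S$; so $Y$ is canonically isomorphic to the Cayley graph of $J_n^S$ with respect to its canonical generators, and $J_n^S$ acts on $Y$ freely and vertex-transitively, hence cocompactly. Everything then reduces to showing that $Y$ is convex in $\mathscr{C}_n$.

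The cornerstone observation I would use is that both types of mock commutations in $J_n$ preserve the cardinality of the interval labelling a generator: disjoint swaps do so trivially, while the flip $s_{p,q}s_{m,r} \to s_{p+q-r,p+q-m}s_{p,q}$ (valid when $[m,r] \subset [p,q]$) replaces $s_{m,r}$ by a generator whose interval has length $(p+q-m)-(p+q-r)+1 = r-m+1$. Consequently, any sequence of mock commutations applied to a word in the canonical generators of $J_n^S$ produces another word in those same generators.

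With this in hand, given $g, h \in J_n^S$, I would pick an irreducible word $w$ in the canonical generators of $J_n^S$ representing $g^{-1}h$. Because every mock commutation and every reduction available in $J_n$ between two letters of $w$ can already be realised inside $J_n^S$ (by the invariance observation), the $J_n^S$-irreducibility of $w$ forces its $J_n$-irreducibility. Appealing to the geometric interpretation of irreducibility recalled in Section~\ref{section:Word}, $w$ then labels a geodesic from $g$ to $h$ in $\mathscr{C}_n$, which visibly lies inside $Y$. Finally, since any two geodesics between the same endpoints in the median graph $\mathscr{C}_n$ differ by a sequence of square-flips (mock commutations), and each such flip keeps all intermediate vertices inside $J_n^S$, every geodesic from $g$ to $h$ remains in $Y$; hence $Y$ is convex.

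Convex-cocompactness is then established, and the quasi-isometric embedding conclusion comes essentially for free since convex subgraphs of median graphs are isometrically embedded. I expect the only real point requiring care to be the bookkeeping of the invariance of interval length under mock commutations; once that is clear, both the passage from ``irreducible in $J_n^S$'' to ``irreducible in $J_n$'' and the stability of $Y$ under square-flips follow without difficulty.
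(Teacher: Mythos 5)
Your proposal is correct and follows essentially the same route as the paper: take the subgraph $\mathscr{C}_n^S$ spanned by the vertices in $J_n^S$, on which the action is clearly cocompact, and prove convexity by observing that mock commutations preserve interval lengths, so irreducible (hence geodesic-labelling) words for elements of $J_n^S$ only involve generators of $J_n^S$. The only difference is that you spell out explicitly the interval-length invariance and the square-flip argument, which the paper delegates to Section~\ref{section:Word}.
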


\begin{proof}
Let $\mathscr{C}_n^S$ denote the subgraph of $\mathscr{C}_n$ given by the vertices representing elements in $J_n^S$. This is a natural copy of the Cayley graph of $J_n^S$ in $\mathscr{C}_n$. Clearly, $J_n^S$ acts on $\mathscr{C}_n^S$ cocompactly. Moreover, because geodesics in $\mathscr{C}_n$ are labelled by words of minimal lengths, or equivalently by irreducible words, we know that every geodesic between two vertices $a,b \in \mathscr{C}_n^S$ corresponds to an irreducible word representing $a^{-1}b$. But we know from Section~\ref{section:Word} that every irreducible representing $a^{-1}b$ is written only using the generators of $J_n^S$, so all the geodesics connecting $a$ and $b$ must stay in $\mathscr{C}_n^S$. Thus, $\mathscr{C}_n^S$ is a convex subgraph of $\mathscr{C}_n$. 
\end{proof}

\begin{cor}
Fix an $n \geq 2$ and a subset $S \subset \{2, \ldots, n\}$. The subgroup $PJ_n^S:= J_n^S \cap PJ_n$ is a virtually retract, and in particular separable, in $PJ_n$.
\end{cor}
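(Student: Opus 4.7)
The plan is to combine Proposition~\ref{prop:ConvexCocompact} with the conspiciality of the action of $PJ_n$ on $\mathscr{C}_n$ established in Theorem~\ref{thm:conspicial}, and then invoke the Haglund--Wise machinery: convex-cocompact subgroups of (virtually) cocompact special groups are virtual retracts. Separability then comes for free once we know that $PJ_n$ is residually finite, which is itself a consequence of admitting a conspicial action on a median graph.

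More concretely, I would first observe that $PJ_n^S = J_n^S \cap PJ_n$ has finite index in $J_n^S$, since $PJ_n$ has finite index in $J_n$. Consequently $PJ_n^S$ still acts cocompactly on the convex subgraph $\mathscr{C}_n^S \subset \mathscr{C}_n$ produced in the proof of Proposition~\ref{prop:ConvexCocompact}. In particular, $PJ_n^S$ is convex-cocompact in $PJ_n$ with respect to the action of $PJ_n$ on the median graph $\mathscr{C}_n$.

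Next, I would appeal to Theorem~\ref{thm:conspicial}, which tells us that the action of $PJ_n$ on $\mathscr{C}_n$ is proper, cocompact and conspicial with trivial cube-stabilisers. By the main theorem of Haglund--Wise \cite{MR2377497}, any convex-cocompact subgroup of a cocompact special group is a virtual retract, so $PJ_n^S$ is a virtual retract of $PJ_n$.

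Finally, a virtual retract of a residually finite group is separable by a standard argument (a retract of a residually finite group is closed in the profinite topology, and being separable is preserved under passage to and from finite-index overgroups). Since $PJ_n$ embeds into a right-angled Coxeter group via the map $\Omega$ of Theorem~\ref{thm:Special}, it is in particular linear over $\mathbb{Z}$ and hence residually finite; so $PJ_n^S$ is separable in $PJ_n$, concluding the proof. The only nontrivial input is the Haglund--Wise theorem; all the geometric hypotheses needed to apply it have already been established in the preceding sections.
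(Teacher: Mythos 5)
Your proposal is correct and follows essentially the same route as the paper: the paper's proof is exactly the combination of Proposition~\ref{prop:ConvexCocompact} with Theorem~\ref{thm:conspicial} and the Haglund--Wise result from \cite{MR2377497}, with the convex-cocompactness of $PJ_n^S$ and the deduction of separability left implicit. Your added details (finite index of $PJ_n^S$ in $J_n^S$ acting on the convex subgraph $\mathscr{C}_n^S$, and residual finiteness of $PJ_n$ via the embedding into a right-angled Coxeter group) simply spell out those implicit steps.
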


\begin{proof}
By combining Theorem~\ref{thm:conspicial} with \cite{MR2377497}, we know that convex-cocompact subgroups of $PJ_n$ are virtual retracts. Thus, the desired conclusion follows from Proposition~\ref{prop:ConvexCocompact}. 
\end{proof}

\noindent
It follows directly from the presentation of our cactus group $J_n$ that
$$\begin{array}{lcl} J_n & = & J_n^{[2,n-1]} \rtimes J_n^{\{n\}}  \\ \\ & = & \left( J_n^{[2,n-2]} \rtimes J_n^{\{n-1\}} \right) \rtimes J_n^{\{n\}} \\ & \vdots & \\ & =& \left( \left( \cdots \left( J_n^{\{2\}} \rtimes J_n^{\{3\}} \right) \rtimes \cdots \right) \rtimes J_n^{\{n-1\}} \right) \rtimes J_n^{\{n\}} \end{array}$$
This is the \emph{top-to-bottom decomposition} of the cactus group $J_n$. Notice that each $J_n^{\{i\}}$ has only commutations in its presentation, so all our factors are right-angled Coxeter groups. Because finite subgroups in right-angled Coxeter groups are powers of $\mathbb{Z}/2\mathbb{Z}$, it immediately follows that:

\begin{cor}[\cite{Paolo}]\label{cor:Torsion}
In cactus groups, finite subgroups are $2$-groups. 
\end{cor}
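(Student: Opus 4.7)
The plan is to exploit the top-to-bottom decomposition
$$J_n = \left( \left( \cdots \left( J_n^{\{2\}} \rtimes J_n^{\{3\}} \right) \rtimes \cdots \right) \rtimes J_n^{\{n-1\}} \right) \rtimes J_n^{\{n\}}$$
which has already been established just before the statement, together with the fact that each factor $J_n^{\{i\}}$ is a right-angled Coxeter group. This last point is essentially automatic from the presentation: the mock commutation $s_{p,q}s_{m,r}=s_{p+q-r,p+q-m}s_{p,q}$ requires $[m,r] \subsetneq [p,q]$, which cannot happen once $r-m+1=q-p+1=i$, so only genuine commutations survive in the presentation of $J_n^{\{i\}}$.

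Granted this, the first key input is that finite subgroups of a right-angled Coxeter group are elementary abelian $2$-groups. This is well-known: any finite subgroup stabilises a point in the Davis complex, hence is conjugate into a finite special subgroup, and finite special subgroups of a right-angled Coxeter group are direct products of copies of $\mathbb{Z}/2\mathbb{Z}$. I would simply cite this (or cite that finite subgroups of a RACG are $2$-groups, which is all we need here).

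The second key input is an elementary closure lemma for semidirect products: if $G = N \rtimes H$ and every finite subgroup of $N$ and of $H$ is a $2$-group, then so is every finite subgroup of $G$. Indeed, for a finite $F \leq G$, the intersection $F \cap N$ is a finite subgroup of $N$, hence a $2$-group, and the quotient $F/(F \cap N)$ embeds into $G/N \cong H$ as a finite subgroup, hence is also a $2$-group; therefore $|F|=|F\cap N|\cdot|F/(F\cap N)|$ is a power of $2$.

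Combining these two observations, I would then iterate the lemma along the top-to-bottom decomposition: starting from $J_n^{\{2\}}$, which is a RACG and so satisfies the property, each successive semidirect extension by $J_n^{\{i\}}$ (again a RACG) preserves the property. After $n-2$ such steps we reach $J_n$, concluding the proof. I do not anticipate any real obstacle here; the only point requiring a moment's care is verifying that $J_n^{\{i\}}$ really is a right-angled Coxeter group, which as noted above follows directly from inspecting the three families of relations restricted to generators with $q-p+1=i$.
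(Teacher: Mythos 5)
Your proof is correct and follows essentially the same route as the paper: the top-to-bottom decomposition into iterated semidirect products of the right-angled Coxeter groups $J_n^{\{i\}}$, plus the fact that finite subgroups of right-angled Coxeter groups are $2$-groups. The only difference is that you make explicit the semidirect-product closure lemma that the paper leaves as ``it immediately follows,'' which is a harmless (and welcome) elaboration.
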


\noindent
The precise structure of subgroups in cactus groups remains unclear. See Section~\ref{section:Open}.

\subsection{From bottom to top}

\noindent
In a median graph, a \emph{reflection} is an isometry that stabilises all the edges of a hyperplane and switches the two halfspaces it delimits. It turns out that, if a group acts on a median graph and contains at least one reflection, then it naturally decomposes as a semi-direct product. In particular, a subgroup generated by reflections is always isomorphic to a right-angled Coxeter group.

\begin{thm}[\cite{MR4359526}]\label{thm:ReflectionGeneral}
Let $G$ be a group acting on a median graph $X$ with trivial vertex-stabilisers. Assume that $\mathcal{J}$ is a $G$-invariant collection of hyperplanes such that $G$ contains a reflection $r_J$ along each hyperplane $J$ of $\mathcal{J}$. If $Y \subset X$ denotes the intersection of all the halfspaces which are delimited by hyperplanes of $\mathcal{J}$ and which contain a fixed basepoint $x_0 \in X$, then  
$$G = R \rtimes \mathrm{stab}(Y), \ \text{where} \ R = \langle r_J, \ J \in \mathcal{J} \rangle.$$ 
Moreover, $Y$ is a fundamental domain of $R \curvearrowright X$ and, if $\mathcal{J}_0$ denotes the maximal $x_0$-peripheral subcollection of $\mathcal{J}$ and if $\Delta$ denotes its crossing graph, then the map sending a vertex $J$ of $\Delta$ to the reflection $r_J$ of $R$ induces an isomorphism $C(\Delta) \to R$. 
\end{thm}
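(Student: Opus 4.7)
The plan is to view $R$ as a reflection group whose fundamental chamber is $Y$, adapting the classical Coxeter-theoretic picture to the median setting. The key preliminary is a folding lemma: for every vertex $x \in X$ there exists $r \in R$ with $r \cdot x \in Y$. I would prove this by induction on the number $d_{\mathcal{J}}(x)$ of hyperplanes of $\mathcal{J}$ separating $x$ from $x_0$; if this is positive, pick $J \in \mathcal{J}$ separating $x$ from $x_0$ and closest to $x$ (no hyperplane of $\mathcal{J}$ strictly between), and apply $r_J$. The reflection $r_J$ uncrosses $J$ while preserving the side relative to $x_0$ of every other hyperplane in $\mathcal{J}$ (because it fixes the carrier of $J$ pointwise and $G$-invariance of $\mathcal{J}$ controls the behaviour of the other hyperplanes), so $d_{\mathcal{J}}(r_J \cdot x) < d_{\mathcal{J}}(x)$, and induction concludes.

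From the folding lemma, the semidirect decomposition is immediate. For any $g \in G$, applying the lemma to $g \cdot x_0$ gives $r \in R$ with $h := rg$ satisfying $h \cdot x_0 \in Y$. Since the defining halfspaces of $Y$ are characterized by containing $x_0$ and $\mathcal{J}$ is $G$-invariant, $h$ permutes these halfspaces trivially, so $h \in \mathrm{stab}(Y)$ and $g = r^{-1} h$. For triviality of $R \cap \mathrm{stab}(Y)$: if $r \in R$ lies in $\mathrm{stab}(Y)$ then $r \cdot x_0 \in Y$, so no hyperplane of $\mathcal{J}$ separates $r \cdot x_0$ from $x_0$; expressing $r$ as a product of generators and tracking the hyperplanes flipped along the corresponding gallery from $x_0$, one forces $r \cdot x_0 = x_0$, hence $r = 1$ by triviality of vertex-stabilisers. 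In particular, $Y$ meets every $R$-orbit in exactly one point, so it is a strict fundamental domain.

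Finally, for the map $\phi : C(\Delta) \to R$ sending a peripheral $J \in \mathcal{J}_0$ to $r_J$: well-definedness is clear, since two transverse peripheral hyperplanes have intersecting carriers and the associated reflections commute on the resulting square; surjectivity follows by observing that any $r_J$ with $J \in \mathcal{J}$ is conjugate to a peripheral reflection, using the identity $r_{gJ} = g r_J g^{-1}$ to iteratively reflect $J$ toward $x_0$ until it becomes peripheral. The main obstacle is injectivity. Given a non-trivial reduced word $w$ in $C(\Delta)$, I would follow the gallery $x_0, \phi(w_1) \cdot x_0, \ldots, \phi(w) \cdot x_0$ and show that the resulting word of hyperplane labels has no adjacent repetition and no way to create one by the allowed commutations, so that some hyperplane of $\mathcal{J}$ genuinely separates $\phi(w) \cdot x_0$ from $x_0$ and hence $\phi(w) \cdot x_0 \neq x_0$. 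Making this rigorous amounts to matching the median combinatorics of the $R$-orbit of $x_0$ (inherited from $X$) with that of the Cayley graph of $C(\Delta)$ provided by Theorem~\ref{thm:CAT}'s analogue for right-angled Coxeter groups, and it is this step that carries the real technical content of the theorem.
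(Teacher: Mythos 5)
First, a remark on the comparison itself: the paper does not prove Theorem~\ref{thm:ReflectionGeneral}; it is quoted from \cite{MR4359526} and used as a black box, so your proposal can only be judged on its own terms. Its architecture (a folding lemma, the product decomposition $G=R\cdot\mathrm{stab}(Y)$, and the Coxeter presentation via the peripheral hyperplanes) is the right one, but two points in the first half need repair. The justification of the folding step is wrong as stated: a reflection $r_J$ does \emph{not} fix the carrier $N(J)$ pointwise -- it preserves each edge of $J$ while exchanging its endpoints, hence swaps the two fibres of $N(J)$; under the trivial-vertex-stabiliser hypothesis a non-trivial element cannot fix any vertex at all. What you actually need is either that $r_J$ stabilises every hyperplane transverse to $J$ and preserves its two halfspaces, combined with the observation that a hyperplane of $\mathcal{J}$ separating $x$ from $N(J)$ would contradict your choice of $J$ closest to $x$; or, more simply, induct on $d(x_0,x)$: if a geodesic from $x_0$ to $x$ crosses $J\in\mathcal{J}$ at an edge $e=(a,b)$, concatenating its initial segment with the $r_J$-image of its terminal segment yields a path from $x_0$ to $r_Jx$ of length $d(x_0,x)-1$. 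Also, the semidirect product requires $R\trianglelefteq G$, which you never verify; it does follow, because trivial vertex-stabilisers force the reflection along a given hyperplane to be unique, whence $gr_Jg^{-1}=r_{gJ}$ and $G$-invariance of $\mathcal{J}$ gives normality -- but this should be said.

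The genuine gap is that the hardest point is asserted rather than proved, in two places that are really the same statement: that $r\in R$ and $rx_0\in Y$ (equivalently, no hyperplane of $\mathcal{J}$ separates $rx_0$ from $x_0$) forces $rx_0=x_0$. You invoke this for $R\cap\mathrm{stab}(Y)=1$ via ``tracking the hyperplanes flipped along the gallery'', and again it is exactly what injectivity of $C(\Delta)\to R$ and strictness of the fundamental domain require; but crossings along the gallery can a priori cancel, and ruling this out is precisely where a Tits-style deletion/exchange argument, a ping-pong on halfspaces, or an explicit $R$-equivariant comparison between the orbit $R\cdot x_0$ and the Cayley graph of $C(\Delta)$ must be carried out. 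You acknowledge this yourself (``the real technical content''), but without it the proposal only establishes that $G=R\cdot\mathrm{stab}(Y)$ and that $Y$ meets every $R$-orbit; the semidirect-product structure, the fact that $Y$ is a (strict) fundamental domain, and the isomorphism $C(\Delta)\to R$ all remain unproved. The surjectivity sketch (conjugating $r_J$ towards a peripheral reflection via $r_{gJ}=gr_Jg^{-1}$) is fine in outline, but it too should be run as an induction on the number of hyperplanes of $\mathcal{J}$ separating $J$ from $x_0$, with a verified decrease.
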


\noindent
A collection of hyperplanes is \emph{$x_0$-peripheral} if it does not contain two hyperplanes such that one separates the other from $x_0$. The \emph{crossing graph} of a collection of hyperplanes is the graph whose vertices are the hyperplanes of the collection and whose edges connect two hyperplanes whenever they are transverse. 

\medskip \noindent
Given the action of our cactus group $J_n$ on its median graph $\mathscr{C}_n$, the canonical generators of $J_n$ all invert edges, but only some of them are reflections. Nevertheless, these reflections lead to interesting decompositions of cactus groups. 

\begin{lemma}\label{lem:Reflection}
Fix $n \geq 2$ and $S \subset \{2, \ldots, n\}$. Let $\mathscr{C}_n^S$ denote the subgraph of $\mathscr{C}_n$ whose vertices represent elements in $J_n^S$. With respect to the action of $J_n^S$ on $\mathscr{C}_n^S$, every generator $s_{p,q}$ with $q-p+1= \min(S)$ is a reflection.
\end{lemma}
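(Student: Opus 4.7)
The plan is to identify the hyperplane $H$ of $\mathscr{C}_n^S$ that contains the edge $(1, s_{p,q})$, and then verify that left-multiplication by $s_{p,q}$ stabilises every edge of $H$ setwise and swaps the two halfspaces it delimits. The halfspace-swap is immediate since $s_{p,q}$ exchanges the endpoints $1$ and $s_{p,q}$. The main task is therefore to prove the following equivalent claim: for every edge $(g, g s_{a,b})$ of $H$, one has $g^{-1} s_{p,q} g = s_{a,b}$, which means precisely that $s_{p,q}\cdot(g, gs_{a,b})=(gs_{a,b},g)$ setwise.

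First, I would use that $\mathscr{C}_n^S$ is convex in $\mathscr{C}_n$ (Proposition~\ref{prop:ConvexCocompact}) in order to transfer the label structure of Lemma~\ref{lem:LabelsHyp}: every edge of $H$ carries the label $[p,q]$, so any edge $(g, g s_{a,b})$ of $H$ satisfies $\Sigma(g)([a,b])=[p,q]$, and in particular $b-a+1=q-p+1=\min(S)$. Thus all edges of $H$ are labelled by generators of $J_n^S$ of minimum interval-length.

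The key step is to prove the claim by induction on the minimal number of square-flips needed inside $\mathscr{C}_n^S$ to connect a given edge of $H$ to $(1,s_{p,q})$. The base case is trivial. For the inductive step, suppose the claim holds for an edge $e$, and let $e'$ be opposite to $e$ in some square of $\mathscr{C}_n^S$; by Corollary~\ref{cor:Cycles} this square corresponds to a commutation or a mock-commutation involving $s_{a,b}$ and some second generator $s_{c,d}$. Crucially, the mock-commutation case with $[c,d]\subsetneq[a,b]$ cannot occur, since it would require $d-c+1 \in S$ with $d-c+1 < b-a+1=\min(S)$, contradicting the minimality of $\min(S)$. The two surviving configurations are handled by direct computation. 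In the commutation case ($[a,b]\cap[c,d]=\emptyset$) the opposite edge is $(gs_{c,d}, gs_{c,d}s_{a,b})$, and the induction hypothesis combined with $[s_{a,b},s_{c,d}]=1$ gives $(gs_{c,d})^{-1}s_{p,q}(gs_{c,d}) = s_{a,b}$. In the mock-commutation case with $[a,b]\subsetneq[c,d]$, the opposite edge is $(g s_{a,b} s_{c,d}, g s_{c,d})$, which is an $s_{c+d-b,c+d-a}$-edge; the induction hypothesis and the defining relation $s_{c,d}s_{a,b}s_{c,d}=s_{c+d-b,c+d-a}$ give $(g s_{a,b} s_{c,d})^{-1} s_{p,q} (g s_{a,b} s_{c,d}) = s_{c,d}s_{a,b}s_{c,d} = s_{c+d-b, c+d-a}$, as required.

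The main obstacle is to ensure the inductive step covers every square of $\mathscr{C}_n^S$ containing an edge of $H$, and this is exactly where the hypothesis $q-p+1=\min(S)$ does the decisive work: it eliminates the ``bad'' mock-commutation subcase in which a smaller nested interval would be needed, a subcase in which the conjugate of $s_{p,q}$ would fail to match the generator of $J_n^S$ labelling the opposite edge. Once this subcase is excluded, the remaining cases are routine calculations in the presentation.
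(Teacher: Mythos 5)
Your proof is correct and follows essentially the same route as the paper's: both reduce to showing $g^{-1}s_{p,q}g=s_{a,b}$ for every edge $(g,gs_{a,b})$ of the hyperplane by moving across the squares described in Corollary~\ref{cor:Cycles}, with the hypothesis $q-p+1=\min(S)$ ruling out precisely the mock commutation that would modify the companion generator. The only difference is organisational: you induct over the chains of squares defining the hyperplane, whereas the paper shifts $s_{p,q}$ along a geodesic from $1$ to $g$ lying in a fibre of $H$.
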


\begin{proof}
Let $e$ be an oriented edge in the same oriented hyperplane $H$ as $(1,s_{p,q})$. Write $e=(g,gs_{a,b})$. Fixing a geodesic from $1$ to $g$, which necessarily lies in a fibre of $H$ according to Theorem~\ref{thm:MedianBig}, let $\ell_1 \cdots \ell_r$ denote the corresponding word of generators which represents $g$. Geometrically, the configuration is:
\begin{center}
\includegraphics[width=0.6\linewidth]{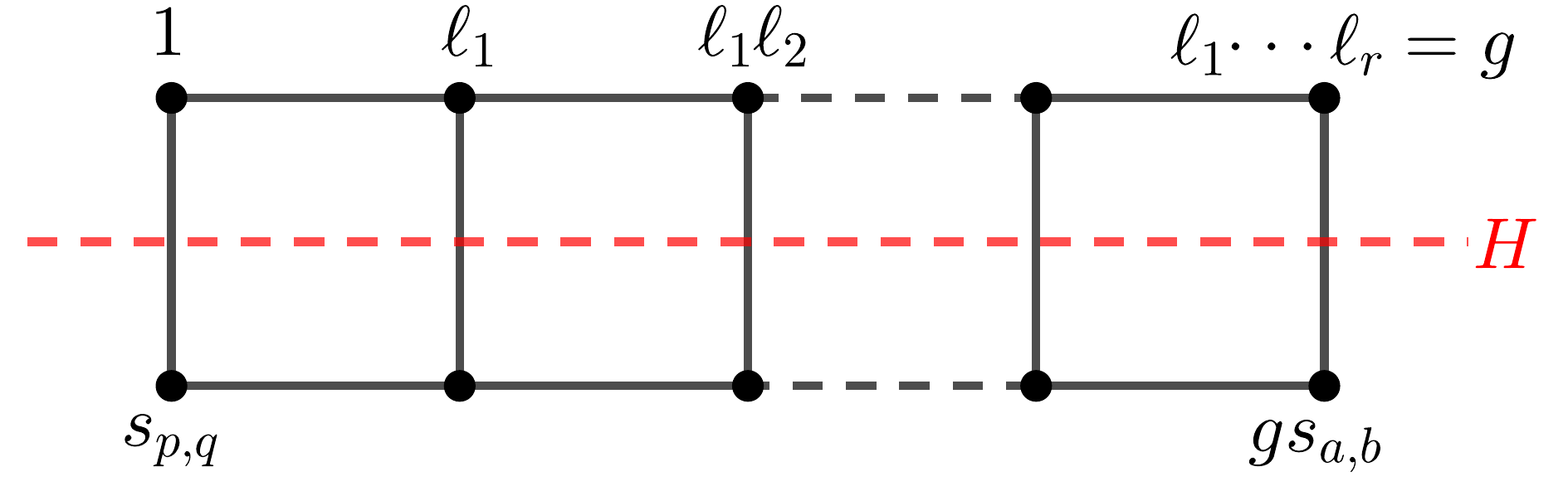}
\end{center}
As a consequence of the characterisation of the $4$-cycles in $\mathscr{C}_n$ given by Corollary~\ref{cor:Cycles}, this amounts to saying that $s_{p,q}$ can be shifted to the end of the word $s_{p,q}\ell_1 \cdots \ell_r$ by using mock commutations. During the process, $s_{p,q}$ becomes $s_{a,b}$; but, because $q-p+1$ is as small as possible, the letters $\ell_i$ remain unchanged. Hence $s_{p,q}g=gs_{a,b}$, which implies that 
$$s_{p,q} \cdot e = s_{p,q} \cdot (g,gs_{a,b})= (gs_{a,b}, gs_{a,b}s_{a,b}) = (gs_{a,b},g).$$
In other words, $s_{p,q}$ stabilises and inverts $e$, as desired. 
\end{proof}

\begin{cor}\label{cor:SplitReflection}
Fix $n \geq 2$ and $S \subset \{2, \ldots, n\}$. Let $R_n^S$ denote the subgroup of $J_n^S$ generated by the conjugates of all the generators $s_{p,q}$ satisfying $q-p+1= \min(S)$. Then $R_n^S$ is a right-angled Coxeter group and
$$J_n^S = R_n^S \rtimes J_n^{S \backslash \{ \min(S)\}}.$$
\end{cor}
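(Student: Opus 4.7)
The plan is to apply Theorem~\ref{thm:ReflectionGeneral} to the action of $J_n^S$ on the subgraph $\mathscr{C}_n^S \subseteq \mathscr{C}_n$. By Proposition~\ref{prop:ConvexCocompact}, $\mathscr{C}_n^S$ is convex in $\mathscr{C}_n$, hence median, and the Cayley graph action of $J_n^S$ on it has trivial vertex-stabilisers. Let $\mathcal{J}$ denote the collection of all hyperplanes of $\mathscr{C}_n^S$ whose label (Lemma~\ref{lem:LabelsHyp}) is a set of strands of cardinality $\min(S)$. Since $\Sigma(g)$ preserves cardinality, Lemma~\ref{lem:LabelsHyp} makes $\mathcal{J}$ a $J_n^S$-invariant collection.

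I would then verify the reflection hypothesis of Theorem~\ref{thm:ReflectionGeneral}. By vertex-transitivity, every hyperplane of $\mathcal{J}$ has the form $g \cdot H_{p,q}$ with $q-p+1 = \min(S)$, where $H_{p,q}$ denotes the hyperplane through $(1, s_{p,q})$. By Lemma~\ref{lem:Reflection}, $s_{p,q}$ is a reflection along $H_{p,q}$; conjugating yields that $g s_{p,q} g^{-1} \in J_n^S$ is a reflection along $g \cdot H_{p,q}$. Thus every hyperplane of $\mathcal{J}$ admits a reflection in $J_n^S$. Theorem~\ref{thm:ReflectionGeneral}, applied at the basepoint $x_0 = 1$, yields a decomposition $J_n^S = R \rtimes \mathrm{stab}(Y)$ where $R$ is the subgroup generated by these reflections (precisely $R_n^S$ by definition), and the last clause of the theorem identifies $R_n^S$ with the right-angled Coxeter group on the crossing graph of the maximal $1$-peripheral subcollection of $\mathcal{J}$.

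It then remains to identify $\mathrm{stab}(Y)$ with $J_n^{S \setminus \{\min(S)\}}$, which I would do by showing that $Y$ coincides with the Cayley subgraph of $J_n^{S \setminus \{\min(S)\}}$ inside $\mathscr{C}_n^S$. A vertex $v$ belongs to $Y$ iff no hyperplane of $\mathcal{J}$ separates $1$ from $v$, iff every geodesic from $1$ to $v$ in $\mathscr{C}_n^S$ crosses no hyperplane of label-size $\min(S)$. Since such geodesics are exactly the paths labelled by irreducible words representing $v$ (Section~\ref{section:Word}, applied via the convexity of $\mathscr{C}_n^S$), and since every word representing $v$ can be turned into an irreducible one by mock commutations and reductions without introducing new generator-sizes, this is in turn equivalent to $v \in J_n^{S \setminus \{\min(S)\}}$. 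Finally, since $1 \in Y$, any $g \in \mathrm{stab}(Y)$ satisfies $g = g \cdot 1 \in Y$, hence $g \in J_n^{S \setminus \{\min(S)\}}$, and conversely $J_n^{S \setminus \{\min(S)\}}$ preserves $Y$ by left multiplication.

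The main obstacle is the last step, namely verifying that $Y$ is precisely the Cayley subgraph of $J_n^{S \setminus \{\min(S)\}}$. This hinges on the fact that both types of mock commutation in Corollary~\ref{cor:Cycles} send a generator of size $q-p+1$ to another generator of the same size, and that reductions cancel only pairs of equal-size letters; this guarantees that the size $\min(S)$ can never appear in the multiset of sizes of an irreducible representative of $v$ when $v$ is a product of generators of other sizes. Once this is in place, the geometric description of $Y$ translates cleanly into the algebraic one, and the semidirect product decomposition follows.
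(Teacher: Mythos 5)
Your proposal is correct and follows essentially the same route as the paper: apply Lemma~\ref{lem:Reflection} and Theorem~\ref{thm:ReflectionGeneral} to the action of $J_n^S$ on the convex subgraph $\mathscr{C}_n^S$, then identify $Y$ with $\mathscr{C}_n^{S\setminus\{\min(S)\}}$ and $\mathrm{stab}(Y)$ with $J_n^{S\setminus\{\min(S)\}}$. You simply spell out in more detail the steps the paper leaves implicit (the $J_n^S$-invariance of the collection of size-$\min(S)$ hyperplanes, conjugation of reflections, and the label/size bookkeeping behind the identification of $Y$).
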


\begin{proof}
Thanks to Lemma~\ref{lem:Reflection}, we can apply Theorem~\ref{thm:ReflectionGeneral} and deduce that $R_n^S$ is a right-angled Coxeter group and that $J_n^S$ decomposes as $R_n^S \rtimes \mathrm{stab}(Y)$ where $Y \subset \mathscr{C}_n^S$ denotes the intersection of all the halfspaces which contain $1$ and are delimited by hyperplanes labelled by sets of $> \min(S)$ strands. In other words, $Y$ is the subgraph of $\mathscr{C}_n^S$ given by all the vertices that can be reached from $1$ by a path whose edges are labelled by generators $s_{p,q}$ satisfying $q-p+1 \in S \backslash \{ \min(S)\}$. This amounts to saying that $Y= \mathscr{C}_n^{S \backslash \{ \min(S)\}}$. Since the stabiliser of this subgraph clearly coincides with $J_n^{S \backslash \{ \min(S) \}}$, the desired conclusion follows. 
\end{proof}

\noindent
By applying iteratively Corollary~\ref{cor:SplitReflection}, one obtains the following decomposition of $J_n$ as an iterated semi-direct product of right-angled Coxeter groups:
$$\begin{array}{lcl} J_n & = & R_n^{[2,n]} \rtimes J_n^{[3,n]} \\ \\ & = & R_n^{[2,n]} \rtimes \left( R_n^{[3,n]} \rtimes J_n^{[4,n]} \right) \\ & \vdots & \\ & = & R_n^{[2,n]} \rtimes \left( R_n^{[3,n]} \rtimes \left( \cdots \rtimes R_n^{[n,n]} \right) \right) \end{array}$$
This is the \emph{bottom-to-top decomposition} of $J_n$. Notice that, contrary to the top-to-bottom decomposition, the right-angled Coxeter groups here are typically not finitely generated.

\section{Acylindrical hyperbolicity}

\noindent
As mentioned in the introduction, having an action on some hyperbolic space may be quite useful in order to deduce valuable information on our group. Even though median graphs of cactus groups are almost never hyperbolic themselves, there exist techniques to recognise typical hyperbolic behaviours in median graphs and to extract from them actions on actual hyperbolic spaces. See \cite{MR4057355} and references therein for more information. 

\medskip \noindent
We begin by proving that most cactus groups do not satisfy some of the strongest forms of hyperbolicity.

\begin{prop}\label{prop:NotHyp}
The following assertions hold:
\begin{itemize}
	\item For every $n \geq 6$, $J_n$ is not hyperbolic.
	\item For every $n \geq 9$, $J_n$ is not relatively hyperbolic.
	\item For every $n \geq 8$, $J_n$ is not cyclically hyperbolic.
\end{itemize}
\end{prop}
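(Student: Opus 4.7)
My strategy for all three items is to exhibit increasingly complex direct-product or commuting subgroup configurations inside $J_n$ that obstruct the relevant notion of hyperbolicity, each obstruction tuned to the strand count $n$.

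For the first item, when $n\ge 6$, I would produce a copy of $\mathbb{Z}^2\le J_n$ as the subgroup $\langle s_{1,2},s_{2,3}\rangle\times\langle s_{4,5},s_{5,6}\rangle\cong D_\infty\times D_\infty$: the intervals $[1,3]$ and $[4,6]$ are disjoint, so all four generators pairwise commute across the two pairs, and the normal form of Proposition~\ref{prop:Normal} shows that $(s_{1,2}s_{2,3})^k$ is already normal (neither flip nor reduction applies, since the intervals $[1,2]$ and $[2,3]$ overlap without being nested and are not disjoint), hence nontrivial for $k\ne 0$, so each pair generates a copy of $D_\infty$. Therefore $J_n$ contains $\mathbb{Z}^2$ and cannot be hyperbolic.

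For the second item, I would extend this to three commuting copies of $D_\infty$: when $n\ge 9$, the subgroup
\[
\langle s_{1,2},s_{2,3}\rangle\times\langle s_{4,5},s_{5,6}\rangle\times\langle s_{7,8},s_{8,9}\rangle\cong D_\infty^3
\]
sits in $J_n$ and produces three pairwise commuting infinite-order elements $a,b,c$, hence a copy of $\mathbb{Z}^3$. Were $J_n$ relatively hyperbolic with proper peripherals, Dru\c{t}u--Sapir's theorem would confine each of the three $\mathbb{Z}^2$'s $\langle a,b\rangle$, $\langle a,c\rangle$, $\langle b,c\rangle$ to a conjugate of a peripheral, and the pairwise infinite intersections together with almost malnormality would collapse them into a common peripheral $P$. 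The main difficulty, and where most of the work lies, is deducing that $P$ is in fact all of $J_n$. I would do this by a symmetry argument: for every generator $s_{p,q}$, conjugation by $s_{p,q}$ sends the above $D_\infty^3$ to another copy in which at least one factor is preserved (either because $s_{p,q}$ commutes with it by disjointness of intervals, or because the mock-commutation relation of the cactus presentation sends it to itself), so $s_{p,q}Ps_{p,q}^{-1}\cap P$ remains infinite and almost malnormality gives $s_{p,q}\in P$; alternatively one could invoke the Behrstock--Dru\c{t}u--Mosher theorem that thick groups are not relatively hyperbolic.

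For the third item, when $n\ge 8$, I would exhibit a subgroup of the form $F_2\times F_2$ inside $J_n$ by taking two disjoint copies of the twin group $T_4$ on the strand-sets $\{1,\dots,4\}$ and $\{5,\dots,8\}$: the two copies commute by disjointness of intervals, and $T_4\cong \mathbb{Z}/2\ast(\mathbb{Z}/2\times\mathbb{Z}/2)$ has negative rational Euler characteristic, hence is virtually free of rank $\ge 2$ and contains a copy of $F_2$. Consequently $J_n$ contains an infinite-order element whose centraliser contains a non-abelian free subgroup. This rules out cyclic hyperbolicity: in such a group every infinite-order element has its centraliser either virtually cyclic (when it is loxodromic) or contained in a virtually cyclic peripheral (when it is parabolic), and in both cases the centraliser is amenable and cannot accommodate a free subgroup of rank $2$.
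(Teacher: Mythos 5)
Your first item is correct and is essentially the paper's argument (the paper uses $J_3\times J_3\cong D_\infty\times D_\infty$); the issues are in the other two items. For relative hyperbolicity, your global scheme (confine the $\mathbb{Z}^2$'s into peripherals, merge them by almost malnormality, then show the resulting peripheral $P$ must contain every generator) is the same as the paper's, but the key claim -- that every generator $s_{p,q}$ either commutes with one of the three $D_\infty$ factors or is mock-commuted onto that same factor -- is false. In $J_9$ take $s_{1,7}$: its interval meets all three blocks $[1,3]$, $[4,6]$, $[7,9]$, so no commutation by disjointness is available, and conjugation by $s_{1,7}$ sends $[1,3]$ to $[5,7]$ and $[4,6]$ to $[2,4]$ while satisfying no relation at all with $s_{7,8}$ and $s_{8,9}$; no factor is preserved, so the infiniteness of $s_{1,7}Ps_{1,7}^{-1}\cap P$ is not justified as written (similarly for $s_{1,8}$, $s_{2,7}$, $s_{2,9}$, $s_{3,8}$, $s_{3,9}$). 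The gap is repairable by a bootstrap: first, every generator whose interval misses one of the three blocks commutes with the corresponding element $a$, $b$ or $c$ of $P$, hence lies in $P$ by almost malnormality; any remaining generator has $p\le 3$ and $q\ge 7$, hence $[4,6]\subset[p,q]$, and it conjugates $b=s_{4,5}s_{5,6}$ to $s_{p+q-5,p+q-4}\,s_{p+q-6,p+q-5}$, a product of two length-two generators each of which misses some block and so already lies in $P$; thus $s_{p,q}\langle b\rangle s_{p,q}^{-1}\le P$ and almost malnormality gives $s_{p,q}\in P$. (The paper avoids this by running the argument with all subgroups $J(A,B)$ attached to pairs of disjoint segments of size at least three and, for a long generator, choosing a centrally symmetric sub-block of size three or four inside $[p,q]$ that $s_{p,q}$ normalises -- which is exactly where $n\ge 9$ enters.)

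For cyclic hyperbolicity your subgroup is the right obstruction (the paper uses $J_4^{\{2\}}\times J_4^{\{2\}}$, which contains $F_2\times F_2$), but the deduction is wrong. Cyclic hyperbolicity only asks for \emph{some} isometric action on \emph{some} hyperbolic space in which every infinite-order element is loxodromic; it assumes neither properness nor acylindricity, and there is no peripheral structure. Hence the centraliser of a loxodromic element need not be virtually cyclic, and ``virtually cyclic peripherals'' do not exist in this setting. Your dichotomy would imply that a cyclically hyperbolic group contains no $\mathbb{Z}^2$, which is false ($\mathbb{Z}^2$ acts on $\mathbb{R}$ by translations through an injective homomorphism to $(\mathbb{R},+)$, making every nontrivial element loxodromic) and would render the threshold $n\ge 8$ pointless, since the $\mathbb{Z}^2$ from your first item already appears at $n=6$. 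What actually rules out $F_2\times F_2$ is the result of \cite{CyclicHyp} that the paper invokes: a cyclically hyperbolic group acting properly and cocompactly on a median graph -- which $J_n$ does by Theorem~\ref{thm:CAT} -- cannot contain a product of two non-abelian free groups. You need to quote (or reprove) that statement; the centraliser argument is not a valid substitute.
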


\begin{proof}
If $n \geq 6$, then $J_n$ clearly contains a subgroup isomorphic to $J_3 \times J_3$, which is isomorphic to the direct sum of two infinite dihedral groups. Because a hyperbolic group cannot contain a subgroup isomorphic to $\mathbb{Z}^2$, we conclude that $J_n$ cannot be hyperbolic.

\medskip \noindent
If $n \geq 8$, then $J_n$ clearly contains a subgroup isomorphic to $J_4^{\{2\}} \times J_4^{\{2\}}$, which is isomorphic to the square of the free product between a cyclic group of order two and the infinite dihedral group. Because a cyclically hyperbolic group acting properly and cocompactly on a median graph cannot contain a product of two non-abelian free groups \cite{CyclicHyp}, we conclude that $J_n$ cannot be cyclically hyperbolic.

\medskip \noindent
Finally, assume that $n \geq 8$ and assume that $J_n$ is hyperbolic relative to some collection of subgroups $\mathcal{H}$. Our goal is to show that the whole group $J_n$ belongs to $\mathcal{H}$.

\medskip \noindent
For all disjoint segments $A,B \subset \{1, \ldots, n\}$ of size $\geq 3$, we denote by $J(A,B)$ (resp.\ $J(A)$, $J(B)$) the subgroup of $J_n$ given by the braided-like pictures supported on $A \sqcup B$ (resp.\ $A$, $B$). Notice that such a group is isomorphic to $J_a \times J_b$ for some $a,b \geq 3$. It follows from \cite[Theorem~4.19]{MR2182268} that there exists some $H(A,B) \in \mathcal{H}$ such that $J(A,B) \subset H(A,B)$. Given another pair of disjoint segments of size at least three $C,D \subset \{1, \ldots, n\}$, either $C$ is disjoint from $A$ or $B$, in which case $J(C)$ lies in $J(C,D)$ and normalises $J(A)$ or $J(B)$; or $C$ intersects both $A$ and $B$, which implies that $D$ is disjoint from $A$ or $B$, so $J(D)$ lies in $J(C,D)$ and normalises $J(A)$ or $J(B)$. In any case, $H(C,D)$ contains an infinite subgroup normalising an infinite subgroup of $H(A,B)$. But we know from \cite[Theorem~1.4]{MR2182268} that $\mathcal{H}$ is an almost malnormal collection of subgroups, so we must have $H(A,B)= H(C,D)$. Thus, we have proved that there exists a subgroup $H \in \mathcal{H}$ such that $J(A,B) \subset H$ for all disjoint segments $A,B \subset \{1, \ldots, n\}$ of size at least three.

\medskip \noindent
Let $1 \leq p < q \leq n$. If $q-p+1 = 2$, then we can find two disjoint segments $A,B \subset \{1, \ldots, n\}$ of size three such that $[p,q] \subset A$. Then $s_{p,q} \in J(A) \subset J(A,B) \subset H$. If $q-p+1>2$, then we can find two disjoint segments $A,B \subset \{1, \ldots, n\}$ such that $B$ has size three and such that $A \subset [p,q]$ has size three or four and is stable under the central inversion along $[p,q]$. Then $s_{p,q}$ normalises the infinite subgroup $J(A)$ of $H$. Again because $\mathcal{H}$ is an almost malnormal collection of subgroups, this implies that $s_{p,q}$ belongs to $H$. Thus, we have proved that $H$ contains all the generators of $J_n$, hence $H=J_n$. We conclude that $J_n$ is not relatively hyperbolic (with respect to proper subgroups). 
\end{proof}

\noindent
In contrast with Proposition~\ref{prop:NotHyp}, it turns out that most cactus groups are acylindrically hyperbolic. 

\begin{thm}\label{thm:AcylHyp}
For every $n \geq 4$, $J_n$ is acylindrically hyperbolic.
\end{thm}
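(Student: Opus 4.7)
The plan is to apply the following criterion, which underlies most acylindrical-hyperbolicity results in median geometry (see \cite{MR4057355}): a group $G$ acting properly and cocompactly on a median graph is acylindrically hyperbolic as soon as $G$ is not virtually cyclic and contains an infinite-order element acting as a \emph{contracting isometry}. Since Theorem~\ref{thm:CAT} provides such an action of $J_n$ on $\mathscr{C}_n$, it suffices to exhibit such a contracting element, and to observe that $J_n$ is not virtually cyclic for $n\ge 4$ (e.g.\ because the canonical quotient $J_n \twoheadrightarrow S_n$ surjects onto a non-virtually-cyclic group).

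My candidate contracting element is $g := s_{1,3}\, s_{2,4}$. Since the intervals $[1,3]$ and $[2,4]$ are neither disjoint nor nested, no defining relation of $J_n$ applies to any subword of any power of $g$. Hence the bi-infinite word $\cdots s_{1,3}\, s_{2,4}\, s_{1,3}\, s_{2,4}\cdots$ is cyclically irreducible, and by the discussion in Section~\ref{section:Word} it labels a bi-infinite geodesic $\gamma \subset \mathscr{C}_n$, which serves as a combinatorial axis for $g$. Under the labelling from Lemma~\ref{lem:LabelsHyp}, the hyperplanes crossed by $\gamma$ receive labels cycling periodically through the four subsets $\{1,2,3\}, \{1,2,4\}, \{1,3,4\}, \{2,3,4\}$ of $\{1,\ldots,n\}$ (this can be read off from the action of $\Sigma(g)=(1\,3)(2\,4)$).

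The technical heart of the proof is to verify that $\gamma$ is contracting. Following the characterisation in \cite{MR4057355}, this reduces to bounding uniformly the number of hyperplanes transverse to two given axis hyperplanes. By Lemma~\ref{lem:HypTransverseLabel}, such a hyperplane must carry a label that is disjoint from or nested in each of the two axis labels under consideration. A direct case analysis among the four axis labels above shows that the admissible labels are forced into a small, explicit finite family (essentially the two-element subsets contained in a pairwise intersection). Combined with cocompactness and the convexity of carriers (Theorem~\ref{thm:MedianBig}), which implies that two hyperplanes carrying the same label and both meeting $\gamma$ in widely separated places cannot share a transverse hyperplane, this produces the desired uniform bound. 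To rule out an elementary action one finally notes that a conjugate such as $s_{3,4} g s_{3,4}$ yields a second contracting axis that is not parallel to $\gamma$, since the labels along it differ.

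The main obstacle will be this last verification of the contracting property: while the labels along $\gamma$ are fully explicit and the label-level analysis is a finite check, one must combine it with the geometry of carriers to translate a bounded set of admissible labels into a bounded \emph{number} of transverse hyperplanes, uniformly in the pair of axis hyperplanes chosen. Once the contracting property of $g$ is secured, the criterion of \cite{MR4057355} applies and delivers the acylindrical hyperbolicity of $J_n$ with no further work.
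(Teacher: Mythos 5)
Your overall strategy (proper cocompact action on $\mathscr{C}_n$, exhibit a contracting element, rule out virtual cyclicity, apply the criteria of \cite{MR4057355}) is the same as the paper's, but the proof has a genuine gap: the candidate element $g=s_{1,3}s_{2,4}$ is not contracting once $n$ is moderately large. The four axis labels $\{1,2,3\},\{1,2,4\},\{1,3,4\},\{2,3,4\}$ are all subsets of $\{1,2,3,4\}$, so every generator $s_{p,q}$ with $[p,q]\subset[5,n]$ genuinely commutes with $g$. Your ``direct case analysis'' of admissible transverse labels is therefore wrong: Lemma~\ref{lem:HypTransverseLabel} allows labels that are \emph{disjoint} from the axis labels (e.g.\ $\{5,6\}$, $\{6,7\}$) as well as supersets such as $[1,5]$ or $[1,n]$, not just two-element subsets of pairwise intersections. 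Concretely, for $n\geq 7$ the subgroup $\langle s_{1,3},s_{2,4}\rangle\times\langle s_{5,6},s_{6,7}\rangle$ is a product of two infinite dihedral groups; commutation squares show that every hyperplane crossing the axis $\gamma'$ of $s_{5,6}s_{6,7}$ is transverse to every hyperplane crossing your axis $\gamma$. Hence \emph{any} two hyperplanes crossed by $\gamma$ admit infinitely many common transversal hyperplanes, the axis lies alongside a combinatorial flat, and no bound of the kind you need (nor the hypothesis of Proposition~\ref{prop:AcylHyp}) can hold. The appeal to ``cocompactness and convexity of carriers'' cannot repair this, because the obstruction is an actual infinite grid of transversals, not a finiteness statement you failed to organise.

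The fix is to choose an element whose axis crosses hyperplanes with labels spread over \emph{all} of $\{1,\dots,n\}$, so that no generator supported away from the axis labels survives the label test. This is exactly what the paper does: it takes the long word $s_{1,2}s_{2,3}\cdots s_{n-1,n}\,s_{n-2,n-1}\cdots s_{2,3}$, observes that its powers admit no reduction or mock commutation (so the orbit path is a convex geodesic axis), and notes that a hyperplane transverse to the axis hyperplanes $H$ and $gH$ must be transverse to all axis hyperplanes in between, hence by Lemma~\ref{lem:HypTransverseLabel} must be labelled by a set disjoint from or nested with every $[i,i+1]$; the only possibility is $[1,n]$. The remaining $[1,n]$-labelled hyperplanes are handled by working in $\mathscr{C}_n^{[2,n-1]}$, which is $1$-dense in $\mathscr{C}_n$, so $H$ and $gH$ have only finitely many common transversals and Proposition~\ref{prop:AcylHyp} applies (non-virtual-cyclicity follows, as you say, from a suitable infinite non-dihedral subgroup such as $J_4^{\{2\}}$). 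If you want to keep a short word like yours, you would at minimum have to restrict to $n=4$ (where your label analysis does close up) and argue separately for larger $n$, which defeats the purpose.
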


\noindent
As recorded in \cite{MR4057355}, there exist many criteria that can be used in order to deduce from an action on a median graph that a given group is acylindrically hyperbolic. We will deduce Theorem~\ref{thm:AcylHyp} from:

\begin{prop}\label{prop:AcylHyp}
Let $G$ be a group acting properly on a median graph $X$. If an element $g \in G$ admits an axis crossing two hyperplanes that are both transverse to only finitely many common hyperplanes, then $g$ is contracting. As a consequence, $G$ is acylindrically hyperbolic unless it is virtually cyclic. 
\end{prop}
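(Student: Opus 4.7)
The plan is to establish that $g$ is a contracting isometry of $X$ using the given hyperplane condition, and then to invoke a standard dichotomy theorem for groups containing such elements.

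First, let $\gamma$ denote an axis of $g$ and let $H_1, H_2$ be the two hyperplanes it crosses whose set $F$ of common transverse hyperplanes is finite. By $g$-equivariance, for every integer $k$, the pair $g^k H_1, g^k H_2$ is crossed by $\gamma$ as well, and admits only the finite set $g^k F$ of common transverse hyperplanes. I would use this to prove the following combinatorial contracting property: a hyperplane that does not cross $\gamma$ can be transverse to at most boundedly many of the translates $g^k H_1$. The argument would proceed by showing that if $K$ is transverse to $g^{k}H_1$ and to $g^{\ell}H_1$ for $k < \ell$, then, by convexity of halfspaces and standard median-geometric ``squeezing'' of $K$ between consecutive $g^j H_i$, the hyperplane $K$ is forced to be transverse to each $g^j H_2$ for $j \in [k,\ell]$. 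Consequently, $K$ contributes to some $g^jF$, and since each $g^j F$ has cardinality $|F|$, a counting argument bounds the number of $g^k H_1$ that a single $K$ can cross.

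Second, I would convert this combinatorial statement into the metric contracting property. By Theorem~\ref{thm:MedianBig}, distances in $X$ coincide with the number of separating hyperplanes, so the combinatorial nearest-point projection onto $\gamma$ is controlled by which hyperplanes cross $\gamma$. The bound obtained in the previous step implies that the projection onto $\gamma$ of any ball disjoint from $\gamma$ has uniformly bounded diameter; equivalently, $\gamma$ is a contracting geodesic and $g$ is a contracting isometry. Finally, I would invoke the standard dichotomy (due to Sisto, building on Bestvina--Bromberg--Fujiwara and Osin): if a group acts properly on a geodesic metric space and contains a contracting element, then it is either virtually cyclic or acylindrically hyperbolic. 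Applied to the proper action $G \curvearrowright X$, this yields the announced conclusion.

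The main obstacle is the first step, specifically the extraction of a \emph{uniform} bound on the number of translates $g^k H_1$ that an off-axis hyperplane $K$ can cross. The intermediate-transversality argument must be carried out carefully: one has to verify that if $K$ straddles two translates of $H_1$ along $\gamma$ then it really must cross the intermediate translates of $H_2$, which relies on the fact that $H_1,H_2$ are both crossed by the same geodesic $\gamma$ and on the convexity of halfspaces guaranteed by Theorem~\ref{thm:MedianBig}. Once this technical step is in place, the remainder of the argument is a direct application of well-established machinery.
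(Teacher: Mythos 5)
Your overall plan (produce a contracting element, then apply the contracting-element dichotomy) is the standard route, and your second step matches the paper, which disposes of both assertions by citation: \cite[Theorem~6.8]{MR4057355} for the contracting statement and \cite{MR3849623} or \cite{MR3415065} for the dichotomy. The gap is in your first step, which is precisely the part the paper outsources. Your key claim --- that if $K$ is transverse to $g^{k}H_1$ and to $g^{\ell}H_1$ then $K$ must be transverse to every intermediate $g^{j}H_2$ --- does not hold in general. The separation argument only forces $K$ to cross hyperplanes that \emph{separate} $g^{k}H_1$ from $g^{\ell}H_1$, and $g^{j}H_2$ need not separate them: translates of $H_1$ and $H_2$ along the axis can be transverse to one another, since in the proposition the axis is merely a geodesic (the hyperplanes crossing it are pairwise non-transverse only in the special convex-axis situation of Theorem~\ref{thm:AcylHyp}). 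What one actually obtains is the alternative ``either $K$ is transverse to $g^{j}H_2$, or $g^{j}H_2$ is transverse to one of $g^{k}H_1$, $g^{\ell}H_1$'', and handling the second branch is where the real work, and the precise notion of well-separation used in \cite{MR4057355} (which includes non-transversality of the pair and a facing-triple caveat), enters. Note also that your counting is not cogent as stated: knowing that $K$ lies in some sets $g^{j}F$ of cardinality $|F|$ does not by itself bound the number of translates of $H_1$ that $K$ crosses.

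A concrete test case shows both the intermediate claim and the architecture of your argument failing: take $X=\mathbb{Z}^2$, $g$ the diagonal translation, $\gamma$ a staircase axis, $H_1$ a vertical hyperplane and $H_2$ a horizontal one crossed consecutively by $\gamma$. No hyperplane is transverse to both $H_1$ and $H_2$, so the hypothesis as you use it is satisfied; yet any horizontal hyperplane is transverse to all translates $g^{k}H_1$ and to no $g^{j}H_2$, every hyperplane of $X$ crosses $\gamma$ (so your off-axis analysis is vacuous), and $\gamma$ is badly non-contracting. This shows two things: any correct proof must use that $H_1$ and $H_2$ are themselves non-transverse (implicit in the skewering/well-separation hypotheses of \cite[Theorem~6.8]{MR4057355}, and true in the paper's application since there the axis is convex), which your sketch never invokes; and the passage from a bound on off-axis hyperplanes to uniformly bounded projections of balls is not automatic, since unbounded projections can be caused by grids of hyperplanes that do cross the axis. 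If you want a self-contained argument, you should follow the proof of \cite[Theorem~6.8]{MR4057355} (well-separated pairs and the characterisation of contracting geodesics) rather than the shortcut above; the dichotomy half of your proposal is fine as is.
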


\noindent
In a median graph $X$, an \emph{axis} of an isometry $g \in \mathrm{Isom}(X)$ is a bi-infinite geodesic line on which $g$ acts as a translation. Our isometry is \emph{contracting} if it admits an axis $\gamma$ such that, for some constant $D \geq 0$, the nearest-point projection on $\gamma$ of every ball disjoint from $\gamma$ has diameter $\leq D$. Loosely speaking, a contracting isometry has the same behaviour as isometries in hyperbolic spaces. 

\begin{proof}[Proof of Proposition~\ref{prop:AcylHyp}.]
The first assertion follows from \cite[Theorem~6.8]{MR4057355}. The second assertion follows from \cite{MR3849623} or \cite{MR3415065}. 
\end{proof}

\begin{proof}[Proof of Theorem~\ref{thm:AcylHyp}.]
Define the element $g \in J_n$ by the word
$$s_{1,2} s_{2,3} \cdots s_{n-1,n} \cdot s_{n-2,n-1} s_{n-3,n_2} \cdots s_{2,3},$$
and let $[1,g]$ be the path from $1$ to $g$ labelled by this word. The key observation is that, given any power of our word, no two consecutive letters agree and no mock commutation applies. This implies that $\gamma:= \bigcup_{k \in \mathbb{Z}} g^k \cdot [1,g]$ is a convex geodesic line on which $g$ acts as a translation. 

\medskip \noindent
Let $H$ denote the hyperplane containing the edge $(1,s_{1,2})$. The convexity of $\gamma$ implies that the hyperplanes crossing $\gamma$ are pairwise non-transverse. Consequently, a hyperplane $K$ transverse to both $H$ and $gH$ has to be transverse to all the hyperplanes crossing $\gamma$ between $(1,s_{1,2})$ and $(g,gs_{1,2})$. It follows from Lemma~\ref{lem:HypTransverseLabel} that $K$ is labelled by a set of strands which is disjoint or nested with all the $[i,i+1]$, $1 \leq i \leq n-1$. The only possibility is that $K$ is labelled by $[1,n]$. Consequently, $H$ and $gH$ are strongly separated in $\mathscr{C}_n^{[2,n-1]}$. 

\medskip \noindent
But every vertex in $\mathscr{C}_n$ lies at distance $\leq 1$ from $\mathscr{C}_n^{[2,n-1]}$. Indeed, every vertex $x$ of $\mathscr{C}_n$ either already belongs to $\mathscr{C}_n^{[2,n-1]}$; or can be written as $ys_{1,n}$ for some $y \in \mathscr{C}_n^{[2,n-1]}$, because there is a mock commutation between $s_{1,n}$ and every other generator, in which case $x$ is adjacent to $y$. Thus, we deduce from our previous observation that $H$ and $gH$ are both transverse to only finitely many common hyperplanes. This proves that $g$ is a contracting isometry of $\mathscr{C}_n$.

\medskip \noindent
According to Proposition~\ref{prop:AcylHyp}, it only remains to check that $J_n$ is not virtually cyclic when $n \geq 4$. But $J_n$ contains $J_4^{\{2\}}$ as a subgroup, which decomposes as a free product between a cyclic group of order two and the infinite dihedral group. 
\end{proof}

\begin{cor}[\cite{Paolo}]\label{cor:Center}
For all $n \geq 3$ and $m \geq 4$, $PJ_m$ and $J_n$ have trivial centres.
\end{cor}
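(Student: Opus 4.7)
The strategy combines Theorem~\ref{thm:AcylHyp} with the surjection $\Sigma\colon J_n\twoheadrightarrow S_n$ to settle $n\ge 4$, and treats $J_3$ by a direct computation since Theorem~\ref{thm:AcylHyp} starts at $n=4$.

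First I would handle $PJ_m$ for $m\ge 4$. Being a finite-index subgroup of the acylindrically hyperbolic group $J_m$, the group $PJ_m$ is itself acylindrically hyperbolic. Among the properties of acylindrically hyperbolic groups listed in the introduction is the absence of any infinite abelian $s$-normal subgroup, and since the centre of a group is normal (hence certainly $s$-normal whenever it is infinite), this forces $Z(PJ_m)$ to be finite. Combined with Corollary~\ref{cor:PureTorsionFree} (pure cactus groups are torsion-free), we conclude $Z(PJ_m)=1$.

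Next, for $J_n$ with $n\ge 3$, the surjection $\Sigma$ sends $Z(J_n)$ into $Z(S_n)$, which is trivial for $n\ge 3$. Hence $Z(J_n)\subseteq\ker\Sigma=PJ_n$. But any element central in the ambient group $J_n$ is a fortiori central in the subgroup $PJ_n$, so in fact $Z(J_n)\subseteq Z(PJ_n)$. For $n\ge 4$, the previous paragraph closes the argument.

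For $J_3$, one argues by hand. The presentation immediately yields $J_3\cong D_\infty\rtimes\langle s_{1,3}\rangle$, with $D_\infty=\langle s_{1,2},s_{2,3}\rangle$ and $s_{1,3}$ acting by swapping the two Coxeter generators. Since $Z(J_3)$ is contained in $C_{J_3}(s_{1,2})\cap C_{J_3}(s_{2,3})$, it suffices to show these two centralisers meet trivially. A direct computation in the semidirect product (using the normal form in $D_\infty$ and noting that $s_{1,2}$ and $s_{2,3}$ are not conjugate in $D_\infty$, since they have distinct images in the abelianisation) gives $C_{J_3}(s_{1,2})=\{1,s_{1,2}\}$ and $C_{J_3}(s_{2,3})=\{1,s_{2,3}\}$, whose intersection is trivial. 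Hence $Z(J_3)=1$. The only real subtlety in the whole argument is this last case: once $J_3$ is treated by hand, the remaining cases follow immediately from the general machinery already developed.
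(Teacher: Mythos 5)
Your proposal is correct and follows essentially the same route as the paper: acylindrical hyperbolicity of $PJ_m$ (finite index in $J_m$) gives a finite centre, torsion-freeness from Corollary~\ref{cor:PureTorsionFree} makes it trivial, and the map $\Sigma$ pushes $Z(J_n)$ into $PJ_n$, hence into $Z(PJ_n)$. The only difference is cosmetic: for $J_3$ the paper simply notes that $J_3$ is infinite dihedral (indeed your $D_\infty\rtimes\langle s_{1,3}\rangle$ is itself isomorphic to $D_\infty$, generated by $s_{1,2}$ and $s_{1,3}$), which makes your centraliser computation unnecessary, though it is correct.
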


\begin{proof}
Notice that $J_3$ is an infinite dihedral group, so its centre is indeed trivial. Now, fix an $n \geq 4$. According to Theorem~\ref{thm:AcylHyp}, $PJ_n$ is acylindrically hyperbolic so its centre must be finite. Since $PJ_n$ is torsion-free according to Corollary~\ref{cor:PureTorsionFree}, it follows that the centre of $PJ_n$ is indeed trivial. Because $\Sigma : J_n \to S_n$ sends the centre of $J_n$ to centre of $S_n$, which is trivial, it follows that the centre of $J_n$ lies in $PJ_n$, and we conclude from our previous observation that the centre of $J_n$ must be trivial as well. 
\end{proof}

\begin{cor}\label{cor:NoFiniteNormal}
Fix an $n \geq 3$. Every finite normal subgroup in $J_n$ is trivial.
\end{cor}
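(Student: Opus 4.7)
The strategy hinges on three previously established results: $PJ_n$ is torsion-free (Corollary~\ref{cor:PureTorsionFree}), every finite subgroup of $J_n$ is a $2$-group (Corollary~\ref{cor:Torsion}), and the natural surjection $\Sigma : J_n \twoheadrightarrow S_n$ has kernel $PJ_n$. Given a finite normal subgroup $N \leq J_n$, the first step is to notice that $N \cap PJ_n$ is a finite normal subgroup of $PJ_n$, hence trivial. Thus $\Sigma$ restricts to an injection $N \hookrightarrow S_n$ whose image is a normal $2$-subgroup of $S_n$.

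For $n \geq 5$ this closes the argument: the only normal $2$-subgroup of $S_n$ is $\{1\}$, since the normal subgroups of $S_n$ are $\{1\}, A_n, S_n$ and only the first has order a power of $2$. For $n = 3$, the group $J_3$ is the infinite dihedral group, whose non-trivial finite subgroups are of order two generated by a reflection; any such subgroup is normal only if the reflection is central, which is ruled out by Corollary~\ref{cor:Center}, so again $N = \{1\}$.

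The delicate case is $n = 4$, where the Klein four-group $V_4 \trianglelefteq S_4$ is itself a $2$-group. Assuming for contradiction $\Sigma(N) = V_4$, denote by $c \in N$ the element with $\Sigma(c) = (14)(23)$. Since $s_{1,4}$ has the same image under $\Sigma$ and satisfies $s_{1,4}^2 = 1$, the normality of $N$ combined with the injectivity of $\Sigma|_N$ force $s_{1,4} c s_{1,4} = c$, i.e.\ $s_{1,4}$ and $c$ commute. Writing $c = s_{1,4} p$ with $p \in PJ_4$ and combining $c^2 = 1$ with the commutation yields $p^2 = 1$; torsion-freeness of $PJ_4$ then forces $p = 1$ and therefore $c = s_{1,4}$. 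The same reasoning applied to the element $a \in N$ with $\Sigma(a) = (12)(34)$, conjugating by $s_{1,3}$ (whose image conjugates $(12)(34)$ to $(14)(23)$ in $S_4$), produces $a = s_{1,3} s_{1,4} s_{1,3}$.

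It remains to compute the third non-trivial element $b := ac \in N$. Using the mock commutation $s_{1,4} s_{1,3} = s_{2,4} s_{1,4}$ I simplify $s_{1,3} s_{1,4} s_{1,3} s_{1,4}$ to $s_{1,3} s_{2,4}$, so $b = s_{1,3} s_{2,4}$. Since the intervals $[1,3]$ and $[2,4]$ are neither disjoint nor nested, no flip or reduction applies to the word $(s_{1,3} s_{2,4})^2 = s_{1,3} s_{2,4} s_{1,3} s_{2,4}$; it is irreducible of length four, hence represents a non-trivial element of $J_4$ by the solution to the word problem from Section~\ref{section:Word}. This contradicts $b^2 = 1$, rules out $\Sigma(N) = V_4$, and concludes the proof. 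The step requiring the most care is this final computation, since the generic argument based on normal subgroups of $S_n$ fails precisely in the case $n=4$.
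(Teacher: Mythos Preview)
Your argument is correct and takes a genuinely different route from the paper's. The paper proceeds geometrically: it intersects $N$ with $J_n^{[2,n-1]}$ and shows this intersection is trivial by exploiting the Roller boundary of $\mathscr{C}_n^{[2,n-1]}$ together with the contracting element constructed in the proof of Theorem~\ref{thm:AcylHyp}; the remaining factor $J_n^{\{n\}}\simeq\mathbb{Z}/2\mathbb{Z}$ is then dispatched via Corollary~\ref{cor:Center}. You instead push $N$ through $\Sigma$ into $S_n$, use torsion-freeness of $PJ_n$ to make $\Sigma|_N$ injective, and invoke the classification of normal subgroups of $S_n$ together with Corollary~\ref{cor:Torsion}. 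This settles $n\geq 5$ and $n=3$ immediately, and you handle the residual case $\Sigma(N)=V_4$ for $n=4$ by an explicit computation in $J_4$, pinning down $c=s_{1,4}$ and $a=s_{1,3}s_{1,4}s_{1,3}$ and then checking that $(s_{1,3}s_{2,4})^2$ is irreducible. Your approach is more elementary---it avoids Roller boundaries entirely and uses only the word problem and basic facts about $S_n$---at the cost of a case split and an ad hoc calculation for $n=4$. The paper's argument is uniform in $n\geq 4$ and better illustrates the median machinery developed earlier, but requires the reader to be comfortable with boundary dynamics. Both are valid; yours would be the natural choice for a reader less invested in the geometric toolkit.
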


\begin{proof}
If $n =3$, $J_n$ is an infinite dihedral group so the conclusion is clear. From now on, assume that $n \geq 4$. Fix a finite normal subgroup $N \lhd J_n$ and let $N'$ denote the intersection $N \cap J_n^{[2,n-1]}$. We claim that $N'$ is trivial. Notice that this is sufficient to conclude our proof. Indeed, since $J_n$ decomposes as $J_n^{[2,n-1]} \rtimes J_n^{\{n\}}$ and since $J_n^{\{n\}}$ is cyclic of order two, our claim implies that $N$ is either trivial or cyclic of order two. But a normal subgroup of order two must lie in the centre of the group, so we conclude from Corollary~\ref{cor:Center} that $N$ is trivial. 

\medskip \noindent
So let us prove that $N'$ is trivial. In the rest of the argument, we assume the reader familiar with Roller boundaries of median graphs. As constructed during the proof of Theorem~\ref{thm:AcylHyp}, there exists an element $g \in J_n^{[2,n-1]}$ acting on $\mathscr{C}_n^{[2,n-1]}$ with an axis $\gamma$ that is convex and that crosses infinitely many pairwise \emph{strongly separated} hyperplanes. (Two hyperplanes being \emph{strongly separated} if they cannot be both transverse to the same hyperplane.) As a consequence, the points $\gamma(\pm \infty)$ in the Roller boundary of $\mathscr{C}_n^{[2,n-1]}$ are isolated vertices. These two vertices at infinity are fixed by $N'$. Indeed, because $N'$ is finite, it stabilises some cube, say $Q$. Then, because $N'$ is normal, it also stabilises all the $g^kQ$, $k \in \mathbb{Z}$, which justifies the assertion. Again because $N'$ is normal, we know that $N'$ fixes all the points in the $J_n^{[2,n-1]}$-orbits of $\gamma(\pm \infty)$. Then $N'$ fixes the vertex of $\mathscr{C}_n^{[2,n-1]}$ given by the median point of three pairwise distinct in the previous orbits. Since vertex-stabilisers are trivial, we conclude that $N'$ must be trivial. 
\end{proof}

\section{Manifold structure}\label{section:manifolds}

\noindent
Interestingly, the cube-completion $\mathscr{A}_n$ of the median graph $\mathscr{C}_n$ on which the cactus group $J_n$ acts is almost a manifold. To be precise, the geometric side of the decomposition $J_n= J_n^{[2,n-1]} \rtimes J_n^{\{n\}}$, where $J_n^{\{n\}}$ is just a cyclic group of order two, is the decomposition of $\mathscr{A}_n$ as the product of the cube-completion $\mathscr{M}_n$ of $\mathscr{C}_n^{[2,n-1]}$ with (the cube-completion of) $\mathscr{C}_n^{\{n\}}$, which is just a single edge. Then:

\begin{prop}\label{prop:Manifold}
Fix an $n \geq 3$. The cube-completion $\mathscr{M}_n$ of the median graph $\mathscr{C}_n^{[2,n-1]}$ is an $(n-2)$-manifold. 
\end{prop}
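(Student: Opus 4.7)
Since $J_n^{[2,n-1]}$ acts vertex-transitively on its Cayley graph $\mathscr{C}_n^{[2,n-1]}$, which is a convex subgraph of the median graph $\mathscr{C}_n$ by Proposition~\ref{prop:ConvexCocompact}, the cube completion $\mathscr{M}_n$ is a CAT(0) cube complex, and to prove that it is an $(n-2)$-manifold it suffices to verify that the link of the vertex $1$ in $\mathscr{M}_n$ is a PL $(n-3)$-sphere. The plan is to describe this link combinatorially, identify it with a classical simplicial sphere, and conclude.

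\medskip

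\noindent First I would compute the link. Edges issuing from $1$ in $\mathscr{C}_n^{[2,n-1]}$ are in bijection with the generators $s_{p,q}$ of $J_n^{[2,n-1]}$, i.e., with proper intervals $[p,q] \subsetneq [1,n]$ of size at least $2$. By Corollary~\ref{cor:Cycles}, two such edges span a square if and only if the corresponding intervals are disjoint or nested. Since $\mathscr{M}_n$ is a CAT(0) cube complex, its vertex links are automatically flag; hence the link of $1$ is the flag complex $L_n$ whose vertices are the proper intervals of $[1,n]$ of size at least $2$, with two intervals joined by an edge if and only if they are disjoint or nested.

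\medskip

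\noindent Second, I would identify $L_n$ with a known polytope boundary. Label the vertices of a convex $(n+1)$-gon $P$ cyclically by $v_0, \ldots, v_n$ and associate to each diagonal $v_a v_b$ (with $0 \leq a < b \leq n$) the interval $[a+1, b] \subset [1,n]$. A direct case analysis shows that this is a bijection between the diagonals of $P$ and the vertices of $L_n$, and that two diagonals fail to cross if and only if the corresponding intervals are disjoint or nested. Hence $L_n$ is the classical simplicial complex of non-crossing diagonals of $P$, which is well known to be the boundary complex of the simplicial polytope polar-dual to the Stasheff associahedron on $n$ letters (a polytope of dimension $n-2$). Therefore $L_n$ is a PL $(n-3)$-sphere, and vertex-transitivity concludes the proof.

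\medskip

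\noindent The main conceptual step is recognising the link as the non-crossing-diagonal complex of a convex polygon; once this bijection is set up, the sphericity is immediate from classical polytope theory. A self-contained alternative, avoiding any appeal to the associahedron, would prove by induction on $n$ that the link of any vertex $[p,q] \in L_n$ decomposes as a join $L_{q-p+1} \ast L_{n-q+p}$, corresponding respectively to bracketings inside $[p,q]$ and bracketings outside $[p,q]$ (after collapsing $[p,q]$ to a single symbol); since a join of spheres is a sphere of the correct dimension, this induction delivers the result. The bookkeeping of this induction would be the main technical obstacle if the polytopal identification were not invoked.
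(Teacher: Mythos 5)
Your main argument is correct, but it takes a genuinely different route from the paper's. Both proofs reduce, via vertex-transitivity, to showing that the link of the vertex $1$ --- the flag complex on the proper intervals of $\{1,\ldots,n\}$ of size at least two, with adjacency given by disjointness or nesting; this identification, resting on Proposition~\ref{prop:ConvexCocompact}, Theorem~\ref{thm:MedianVScube} and Corollary~\ref{cor:Cycles}, is common to both --- is an $(n-3)$-sphere. You then identify this complex with the complex of non-crossing diagonals of a convex $(n+1)$-gon (your dictionary $v_av_b \leftrightarrow [a+1,b]$ does match crossings with proper overlaps, including the shared-endpoint cases) and quote the classical fact that this dissection complex is the boundary complex of the simplicial polytope polar-dual to the $(n-2)$-dimensional associahedron, hence a PL $(n-3)$-sphere. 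The paper instead argues in a self-contained way, in two steps: a rooted-forest analysis showing that every simplex of the link lies in an $(n-3)$-simplex and every $(n-4)$-simplex lies in exactly two of them, followed by an induction on $n$ rebuilding the link from $\mathfrak{L}_{n-1}^{+}$ by successively coning off balls (joins of smaller links) and finally coning off an $(n-4)$-sphere. Your route buys brevity and a conceptually pleasing identification of the vertex links with dual associahedra, consistent with the moduli-space picture alluded to after Corollary~\ref{cor:Surface}; the paper's route buys self-containedness, at the cost of exactly the combinatorial bookkeeping you anticipated.

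One caveat concerning your proposed self-contained alternative: establishing that the link of each vertex $[p,q]$ of $L_n$ is the join $L_{q-p+1} \ast L_{n-q+p}$ and then invoking induction only shows that all vertex links of $L_n$ are spheres, i.e.\ that $L_n$ is a closed $(n-3)$-manifold; it does not by itself show that $L_n$ is a sphere (all vertex links of a triangulated torus are circles). To close that route you would still need a global ingredient --- shellability, the polygon/associahedron identification, or the explicit cone-by-cone reconstruction the paper carries out. Your primary argument does not suffer from this issue.
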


\begin{proof}
It suffices to verify that the link $\mathfrak{L}_n$ of a vertex in $\mathscr{M}_n$ is topologically an $(n-3)$-sphere. Let $\mathfrak{L}_n^+$ denote the simplicial complex whose vertices are the intervals in $\{1, \ldots, n\}$ and whose edges connect two intervals whenever they are either disjoint or nested. Here, an interval has always length at least two. Then $\mathfrak{L}_n$ coincides with the subcomplex of $\mathfrak{L}_n^+$ spanned by the proper intervals of $\{1, \ldots, n\}$. Notice that $\mathfrak{L}_n^+$ is a cone over $\mathfrak{L}_n$, the apex corresponding to the vertex $[1,n]$.

\begin{claim}\label{claim:Manifold}
The complex $\mathfrak{L}_n$ is an $(n-3)$-manifold.
\end{claim}

\noindent
Let $\mathfrak{S} \subset \mathfrak{L}_n$ be a $k$-simplex for some $k \leq n-3$. The vertices of $\mathfrak{S}$ give $k+1$ pairwise disjoint or nested intervals in $\{1, \ldots, n\}$. Let $T$ be the rooted forest whose vertices are the singleton in $\{1, \ldots, n\}$ and the intervals given by $\mathfrak{S}$ such that the descendants of an interval are the intervals and singletons it contains. Observe that a vertex that is not a leaf has always at least two children and that there are at least two roots since our intervals are proper. 

\medskip \noindent
If $T$ contains a vertex $v$ which at least three children, then we can find three consecutive intervals $A,B,C \subset \{1, \ldots, n\}$ representing children of $v$. Then $\mathfrak{S}$ is contained in the two $(k+1)$-simplices spanned by $\mathfrak{S}\cup \{A \cup B\}$ and $\mathfrak{S} \cup \{B,C\}$. A similar argument holds if $T$ has at least three roots. We deduce from this observation two consequences. First, every $(n-4)$-simplex is contained in at least two $(n-3)$-simplices. And next, by iterating the argument, we find that $\mathfrak{S}$ is contained in a simplex whose forest is \emph{good}, i.e.\ which has two roots and all of whose vertices that are not leaves have two children. Since such a forest has $n$ leaves by construction, the forest has to contain exactly $n-2$ vertices that are not leaves, which amounts to saying that $\mathfrak{S}$ lies in an $(n-3)$-simplex. 

\medskip \noindent
It remains to verify that, assuming that $\mathfrak{S}$ is $(n-4)$-dimensional, it cannot be contained in three $(n-3)$-simplices. Fix an $(n-3)$-simplex $\mathfrak{S}^+$ containing $\mathfrak{S}$. As a consequence of the previous paragraph, we know that the forest $T^+$ of $\mathfrak{S}^+$ is good. Because $\mathfrak{S}$ can be obtained from $\mathfrak{S}^+$ by removing a vertex, the forest $T$ is obtained from $T^+$ either by removing a root or by removing a vertex $v$ and connecting the children of $v$ to the parent $v^+$ of $v$. In order to get a good forest from $T$ by adding a single vertex, there are only two possibilities: in the former case, we connect the new vertex to two roots given by consecutive intervals or singletons; in the latter case, $v^+$ has three children and we insert the new vertex between $v^+$ and two children given by consecutive intervals or singletons. Thus, $\mathfrak{S}$ is contained in exactly two $(n-3)$-simplices. This concludes the proof of Claim~\ref{claim:Manifold}. 

\medskip \noindent
Clearly, $\mathfrak{L}_3$ is a $0$-sphere (i.e.\ two isolated vertices). From now on, fix an $n \geq 4$ and assume that $\mathfrak{L}_m$ is an $(m-3)$-sphere for every $m <n$. 

\medskip \noindent
Identifying $\mathfrak{L}_{n-1}^+$ with the subcomplex of $\mathfrak{L}_n$ given by the intervals in $\{1, \ldots, n-1\}$, we can reconstruct $\mathfrak{L}_n$ by adding successively the vertices $[n-1,n]$, $[n-2,n]$, and so on. 

\medskip \noindent
Formally, given an $i \leq n-3$, let $\mathfrak{A}_{n-1}^i$ denote the subcomplex of $\mathfrak{L}_n$ spanned by $\mathfrak{L}_{n-1}^+$ and the vertices $[n-i,n], \ldots, n-1,n]$. Observe that each $\mathfrak{A}_{n-1}^{i}$ is obtained from $\mathfrak{A}_{n-1}^{i-1}$ by gluing a cone (whose apex is given by the vertex $[n-i,n]$) over the subcomplex spanned by the intervals disjoint from $[n-i,n]$ or proprely nested in it. The latter subcomplex is isomorphic to the join of $\mathfrak{L}_{n-i-1}^+$ and $\mathfrak{L}_{i+1}$. By induction, we know that $\mathfrak{L}_{n-i-1}$ is an $(n-i-4)$-sphere, which implies that $\mathfrak{L}_{n-i-4}^+$ is an $(n-i-3)$-ball, and that $\mathfrak{L}_{i+1}$ is an $(i-2)$ sphere, so our join is an $(n-4)$-ball. Thus, the complex $\mathfrak{A}_{n-1}^{n-3}$, which coincides with the complex obtained from $\mathfrak{L}_n$ by removing all the simplices containing $[2,n]$, is a manifold with boundary obtained from the $(n-3)$-ball $\mathfrak{L}_{n-1}^+$ by gluing successively cones over $(n-4)$-balls. This implies that $\mathfrak{A}_{n-1}^{n-3}$ is an $(n-3)$-ball. 

\medskip \noindent
In order to obtain $\mathfrak{L}_n$ from $\mathfrak{A}_{n-1}^{n-3}$, we need to glue a last cone (whose apex is given by $[n-1,n]$) over the subcomplex spanned by the intervals properly contained in $\{2, \ldots, n\}$. The latter subcomplex is isomorphic $\mathfrak{L}_{n-1}$, so this is an $(n-4)$-sphere. Thus, $\mathfrak{L}_n$ is a manifold obtained from an $(n-3)$-ball by gluing a cone over an $(n-4)$-sphere. This implies that $\mathfrak{L}_n$ must be an $(n-3)$-sphere, as desired. 
\end{proof}

\noindent
As a first application of Proposition~\ref{prop:Manifold}, let us observe that the cactus group $J_4$ is virtually a surface group.

\begin{cor}\label{cor:Surface}
The pure cactus group $PJ_4$ is a hyperbolic surface group.
\end{cor}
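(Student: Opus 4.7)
The plan is to specialise Proposition~\ref{prop:Manifold} to $n=4$, and then promote the surface-group structure from a finite-index subgroup to all of $PJ_4$. First, Proposition~\ref{prop:Manifold} asserts that $\mathscr{M}_4$ is a $2$-manifold. Since $\mathscr{C}_4^{[2,3]}$ is a convex subgraph of the median graph $\mathscr{C}_4$ by Proposition~\ref{prop:ConvexCocompact}, it is itself median, so Theorem~\ref{thm:MedianVScube} ensures that $\mathscr{M}_4$ is CAT(0) and in particular contractible. A simply connected $2$-manifold without boundary is $\mathbb{R}^2$ by the classification of surfaces, so $\mathscr{M}_4 \cong \mathbb{R}^2$.

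Next, I would consider the subgroup $H := PJ_4 \cap J_4^{[2,3]}$ of $PJ_4$. The restriction of $\Sigma$ to $J_4^{[2,3]}$ is surjective, since $\Sigma(s_{1,2}), \Sigma(s_{2,3}), \Sigma(s_{3,4})$ are already the Coxeter generators of $S_4$, and combined with the decomposition $J_4 = J_4^{[2,3]} \rtimes \langle s_{1,4}\rangle$ this yields $[PJ_4 : H]=2$. The subgroup $H$ acts on $\mathscr{M}_4$ cocompactly (as $J_4^{[2,3]}$ already acts regularly on its Cayley graph $\mathscr{C}_4^{[2,3]}$) and freely, since Theorem~\ref{thm:conspicial} guarantees that $PJ_4$ has trivial cube-stabilisers in $\mathscr{C}_4$. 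Therefore $\mathscr{M}_4 / H$ is a closed aspherical surface and $H$ is its fundamental group. Because $PJ_4$ is acylindrically hyperbolic by Theorem~\ref{thm:AcylHyp}, it, and hence $H$, contains a non-abelian free subgroup, ruling out the sphere, projective plane, torus, and Klein bottle. Thus $H$ is a closed hyperbolic surface group.

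Finally, $PJ_4$ is torsion-free by Corollary~\ref{cor:PureTorsionFree} and contains $H$ as an index-two subgroup. The conclusion follows from the classical fact that a torsion-free group virtually isomorphic to a closed hyperbolic surface group is itself such a group: being a Poincar\'e duality group of dimension~$2$ is preserved under torsion-free finite extensions, and finitely presented $\mathrm{PD}(2)$ groups are closed surface groups by results of Eckmann--M\"uller and Eckmann--Linnell. I expect this last promotion, from the finite-index subgroup $H$ to the full $PJ_4$, to be the main conceptual hurdle; all the other ingredients are direct consequences of results already established in the article.
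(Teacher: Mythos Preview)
Your argument is correct, but it takes a substantial detour that the paper avoids. The paper works directly with $PJ_4$ rather than passing through the index-two subgroup $H=PJ_4\cap J_4^{[2,3]}$. Since $PJ_4$ acts on the full cube-completion $\mathscr{A}_4$ with trivial cube-stabilisers (Theorem~\ref{thm:conspicial}), and $\mathscr{A}_4$ decomposes as $\mathscr{M}_4\times[0,1]$, collapsing the interval factor yields a free, cocompact action of $PJ_4$ itself on the simply connected surface $\mathscr{M}_4$: an element fixing a point of $\mathscr{M}_4$ after collapsing would stabilise an edge of $\mathscr{A}_4$, which is impossible. Hence $PJ_4$ is immediately the fundamental group of a closed surface, and the non-abelian free subgroup rules out the four exceptional surfaces exactly as you do.

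The difference matters because your promotion step from $H$ to $PJ_4$ invokes the Eckmann--M\"uller / Eckmann--Linnell theorem that finitely presented $\mathrm{PD}(2)$ groups are surface groups, which is a genuinely deep result. The paper's collapsing argument replaces this with a one-line observation about cube-stabilisers. Your route does have the minor advantage of making the intermediate group $H$ and its quotient surface completely explicit, but the cost in external machinery is high for what the corollary needs.
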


\begin{proof}
According to Theorem~\ref{thm:conspicial}, $PJ_4$ acts freely and cocompactly on $\mathfrak{A}_n$, and it follows from Proposition~\ref{prop:Manifold} that $\mathfrak{A}_4$ decomposes as $\mathfrak{M}_4 \times [0,1]$ where $\mathfrak{M}_4$ is a surface. By collapsing $[0,1]$, we conclude that $PJ_4$ acts freely and cocompactly on a simply connected surface, so it has to be the fundamental group of a surface. Moreover, the latter surface cannot be a sphere, a torus, a projective plane, nor a Klein bottle since $PJ_4$ contains a non-abelian free subgroup. 
\end{proof}

\noindent
It worth mentioning that the compactifications of modular spaces whose fundamental groups are classically identified with pure cactus groups are also aspherical manifolds, so the fact that the pure cactus group $PJ_n$ can be described as the fundamental group of an aspherical $(n-2)$-dimensional is not new. This also includes Corollary~\ref{cor:Surface}. It can be verified that $PJ_4$ is the fundamental group of the non-orientable surface of Euler characteristic $-3$. However, the geometry provided by Proposition~\ref{prop:Manifold} will allow us to deduce more information, namely Corollary~\ref{cor:QI} below. 

\medskip \noindent
As a second application of Proposition~\ref{prop:Manifold}, it is possible distinguish cactus groups from different perspectives. Up to isomorphism, cactus groups are easy to distinguish: it suffices to compute their abelianisations from their presentations, namely $J_n / [J_n,J_n] \simeq (\mathbb{Z}/2\mathbb{Z})^{n-1}$ for every $n \geq 2$. However, the same question for pure cactus groups is already more difficult. However, by cohomological arguments, one easily deduces from Proposition~\ref{prop:Manifold} that pure cactus groups are pairwise non-isomorphic, and even that cactus groups are pairwise non-commensurable. By using coarse cohomology, it is possible, more generality, to distinguish cactus groups up to quasi-isometry.

\begin{cor}\label{cor:QI}
For all $n,m \geq 2$, the cactus groups $J_n$ and $J_m$ are quasi-isometric if and only if $n=m$.
\end{cor}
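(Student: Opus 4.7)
The plan is to extract the dimension of the manifold $\mathscr{M}_n$ supplied by Proposition~\ref{prop:Manifold} as a quasi-isometry invariant of $J_n$, using Gromov's coarse cohomology. Since quasi-isometry is preserved under passage to finite-index subgroups, I first pass to a torsion-free finite-index subgroup of $J_n$ acting freely on $\mathscr{M}_n$, namely $PJ_n^{[2,n-1]} := PJ_n \cap J_n^{[2,n-1]}$. Indeed, $J_n^{[2,n-1]}$ has index two in $J_n$ by the top-to-bottom decomposition, and $PJ_n^{[2,n-1]}$ is the kernel of the restriction of $\Sigma : J_n \to S_n$ to $J_n^{[2,n-1]}$, hence has finite index since $S_n$ is finite. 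The case $n=2$ is immediate because $J_2 = \mathbb{Z}/2\mathbb{Z}$ is finite while $J_m$ is infinite for $m \geq 3$, so we only need to treat $n, m \geq 3$.

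Next I would show that $PJ_n^{[2,n-1]}$ acts freely and cocompactly on the contractible $(n-2)$-manifold $\mathscr{M}_n$. Cocompactness follows from the vertex-transitive action of $J_n^{[2,n-1]}$ on $\mathscr{C}_n^{[2,n-1]}$ together with the finiteness of $[J_n^{[2,n-1]}:PJ_n^{[2,n-1]}]$. The classification of $4$-cycles of $\mathscr{C}_n$ from Corollary~\ref{cor:Cycles} identifies $\mathscr{M}_n$ with a $J_n^{[2,n-1]}$-invariant subcomplex of $\mathscr{A}_n$; freeness of $PJ_n^{[2,n-1]} \curvearrowright \mathscr{M}_n$ is therefore inherited from the freeness of $PJ_n \curvearrowright \mathscr{A}_n$ provided by Theorem~\ref{thm:conspicial}. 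Proposition~\ref{prop:Manifold} combined with Theorem~\ref{thm:MedianVScube} ensures that $\mathscr{M}_n$ is a contractible $(n-2)$-manifold without boundary, so the quotient $PJ_n^{[2,n-1]} \backslash \mathscr{M}_n$ is a closed aspherical $(n-2)$-manifold and $PJ_n^{[2,n-1]}$ is a Poincar\'e duality group of dimension $n-2$ (over $\mathbb{Z}/2\mathbb{Z}$ to bypass orientation issues).

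The final step invokes the quasi-isometry invariance of coarse cohomology. Since $\mathscr{M}_n$ is a uniformly contractible bounded-geometry metric space, Gromov's coarse cohomology agrees with compactly-supported cohomology, so one has $HX^*(PJ_n^{[2,n-1]};\mathbb{Z}/2\mathbb{Z}) \cong H^*_c(\mathscr{M}_n;\mathbb{Z}/2\mathbb{Z})$, which by Poincar\'e duality vanishes in degrees strictly above $n-2$ and is non-zero in degree $n-2$. Thus the top non-vanishing degree of coarse cohomology is a quasi-isometry invariant of $J_n$ equal to $n-2$, and any quasi-isometry $J_n \sim J_m$ forces $n-2 = m-2$, i.e.\ $n=m$. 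The main technical hurdle is the clean handling of coarse cohomology in the potentially non-orientable setting: this is precisely why $\mathbb{Z}/2\mathbb{Z}$-coefficients are used throughout, since they sidestep orientation bookkeeping and produce a quasi-isometry invariant uniformly for all $n \geq 3$.
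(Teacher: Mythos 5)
Your argument is correct and essentially the paper's own: both extract the dimension $n-2$ as a quasi-isometry invariant by combining the quasi-isometry invariance of coarse cohomology with Roe's identification of coarse cohomology and compactly supported cohomology for the uniformly contractible manifold $\mathscr{M}_n$. The detour through the torsion-free finite-index subgroup $PJ_n^{[2,n-1]}$ and Poincar\'e duality is harmless but unnecessary, since coarse cohomology is an invariant of the metric space $\mathscr{M}_n$ itself and the proper cocompact (not necessarily free) action of $J_n$ on $\mathscr{A}_n$, which is $\mathscr{M}_n \times [0,1]$, already yields a quasi-isometry between $J_n$ and $\mathscr{M}_n$.
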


\noindent
In order to prove the assertion, we need the following observation:

\begin{fact}\label{fact:UC}
Let $X$ be a median graph. If its cube-completion $X^\square$ is finite-dimensional, then it is \emph{uniformly contractible}, i.e.\ for every $R \geq 0$, there exists some $S \geq R$ such that every ball of radius $R$ is contractible in a ball of radius $S$.
\end{fact}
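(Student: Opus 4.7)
The plan is to combine the CAT(0) structure on $X^\square$ provided by Theorem~\ref{thm:MedianVScube} with a comparison between the combinatorial and CAT(0) metrics that becomes available precisely when the cube-completion is finite-dimensional. In the CAT(0) metric $d_2$ on $X^\square$, uniform contractibility is for free with $S=R$: fix any basepoint $x \in X^\square$ and, using unique geodesics, define the straight-line contraction $h_t(y):=\gamma_{x,y}(t\cdot d_2(x,y))$; this is a continuous homotopy from $\mathrm{id}_{X^\square}$ to the constant map at $x$, and every CAT(0) ball around $x$ is invariant under each $h_t$ by convexity of balls in CAT(0) spaces.

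Let $d := \dim X^\square < \infty$. On any single cube of dimension $k \leq d$, the $\ell^1$ and $\ell^2$ norms are related by
\[ \|v\|_2 \;\leq\; \|v\|_1 \;\leq\; \sqrt{k}\,\|v\|_2 \;\leq\; \sqrt{d}\,\|v\|_2. \]
A CAT(0) geodesic in $X^\square$ decomposes as a concatenation of straight Euclidean segments, each contained in a single cube (it crosses each hyperplane at most once, and in between is a straight segment inside the cube it meets). Integrating the pointwise estimate along such a geodesic yields, for all $p,q \in X^\square$,
\[ d_2(p,q) \;\leq\; d_1(p,q) \;\leq\; \sqrt{d}\,d_2(p,q), \]
where $d_1$ denotes the $\ell^1$ (combinatorial) metric. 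In particular $B_{d_1}(x,R) \subseteq B_{d_2}(x,R) \subseteq B_{d_1}(x,\sqrt{d}\,R)$. Setting $S := \sqrt{d}\,R$, the straight-line contraction restricted to $B_{d_2}(x,R)$ stays in $B_{d_2}(x,R) \subseteq B_{d_1}(x,S)$, and therefore contracts $B_{d_1}(x,R)$ inside $B_{d_1}(x,S)$, which is exactly the uniform contractibility we want.

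The only delicate point — and the one I would be most careful about — is the decomposition of CAT(0) geodesics into finitely many straight segments, each contained in a single cube, which is what allows the local $\ell^1$-vs-$\ell^2$ estimate to integrate to a global bi-Lipschitz comparison. This is however a standard feature of finite-dimensional CAT(0) cube complexes and can be deduced from the fact that only finitely many hyperplanes separate two given points (Theorem~\ref{thm:MedianBig}) together with convexity of carriers, so that once one knows the hyperplanes crossed by the geodesic, the geodesic is uniquely reconstructed by straight Euclidean segments inside the corresponding sequence of cubes. Given this, the rest of the argument is an elementary comparison of balls.
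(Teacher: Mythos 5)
Your proof is correct and follows essentially the same route as the paper, which likewise endows $X^\square$ with the CAT(0) metric, invokes its biLipschitz equivalence with the combinatorial metric, and concludes from convexity of CAT(0) balls. You have simply made explicit the details the paper leaves implicit (the geodesic straight-line contraction and the $\sqrt{d}$-comparison of the $\ell^1$ and $\ell^2$ metrics, which is where finite-dimensionality enters), and these are standard and correct.
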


\begin{proof}[Proof of Fact~\ref{fact:UC}.]
Cube-completions of median graphs can be endowed with a CAT(0) metric, biLipschitz equivalent to the combinatorial metric on the one-skeleton. Since balls in CAT(0) spaces are convex, the desired conclusion follows.
\end{proof}

\begin{proof}[Proof of Corollary~\ref{cor:QI}.]
If $J_n$ and $J_m$ are quasi-isometric, then so are the manifolds $\mathscr{M}_n$ and $\mathscr{M}_m$, which implies that the coarse cohomology groups of $\mathscr{M}_n$ and $\mathscr{M}_m$, as defined in \cite{MR2007488}, agree. But we know from \cite[Theorem~5.28]{MR2007488}, which applies thanks to Fact~\ref{fact:UC}, that coarse cohomology groups and compactly supported cohomology groups of our manifolds agree. Consequently, $\mathscr{M}_n$ and $\mathscr{M}_m$ must have the same dimension, i.e.\ $n=m$. 
\end{proof}

\begin{remark}
As an alternative proof of Corollary~\ref{cor:QI}, one can verify that the asymptotic dimension of the cactus group $J_n$ equals $n-2$. Indeed, as proved in \cite[Paragraph~$1.F_1$]{MR1253544}, the asymptotic dimension of a uniformly contractible manifold is bounded below by its dimension\footnote{In fact, \cite{MR1253544} claims that the two dimensions agree, but the reverse inequality is disproved by \cite[Example~3.4]{MR1607744}}. On the other hand, we know from \cite{MR2916293} that the asymptotic dimension of the cube-completion of a median graph is bounded above by its dimension. 
\end{remark}

\section{Open problems}\label{section:Open}

\noindent
In this final section, we record various problems and questions about cactus groups.

\paragraph{Computational problems.} We saw in Section~\ref{section:WandCP} that the word and conjugacy problems are (explicitly and efficiently) solvable in cactus groups. But there are other natural computational problems to look at. For instance:

\begin{problem}
Solve explicitly and efficiently the order problem in cactus groups.
\end{problem}

\noindent
In other words, we would like a nice algorithm that determines the order of a given element. It is not difficult to deduce from the median geometry described in Section~\ref{section:MedianGeometry} that the order problem is solvable\footnote{Let $g \in J_n$ be an element. Up to replacing $g$ with a large power (depending only on $n$), we can assume that, as an isometry of $\mathscr{C}_n$, no power of $g$ inverts a hyperplane. For every vertex $x \in \mathscr{C}_n$, we can check whether it is fixed by $g$ or belongs to an axis of $g$ (by checking whether $x, gx, \ldots, g^kx$ all lie on common geodesic for some $k$ large enough compared to $n$). In the former case, $g$ has finite order (which can be computed since the word problem is solvable); and in the latter case, $g$ has infinite order. We check the vertices of larger and larger balls around $1$.}, but some additional work is required in order to solve the problem in a more satisfying way, in terms of braided-like pictures or interval diagrams. 

\begin{problem}
Describe centralisers in cactus groups. As a consequence, solve the commutation problem.
\end{problem}

\noindent
Again, the median geometry of cactus groups can be helpful in order to solve this problem. Either by exploiting the structure of group presentation diagrams of minimal area as in Section~\ref{section:Conjugacy}, or by using the structure of centralisers of isometries of median graphs (see for instance \cite{MedianSets}). 

\begin{problem}
Solve explicitly and efficiently the power problem in cactus groups.
\end{problem}

\noindent
In other words, we would like an algorithm that determines, for every element $g$ of our cactus group $J_n$, the (finite) set $\{ (x,k) \in J_n \times \mathbb{N} \mid x^k=g\}$. 

\medskip \noindent
Another important computational problem is the isomorphism problem. However, as discussed in Section~\ref{section:manifolds}, cactus groups can be easily distinguished. Nevertheless, the following related problem is of interest:

\begin{problem}
Describe the automorphism groups of cactus groups.
\end{problem}

\paragraph{Subgroups of cactus groups.} We saw with Corollary~\ref{cor:Torsion} that finite subgroups in cactus groups are $2$-groups, but having a precise description of (maximal) finite subgroups in cactus groups would be more satisfying. Since finite groups of isometries in median graphs stabilise cubes, it is possible to exploit the median geometry describe in Section~\ref{section:MedianGeometry} in order to find a description of finite subgroups of cactus groups as subgroups of symmetry groups. However, their algebraic structures require some additional work in order to be fully understood. We conjecture that maximal finite subgroups in cactus groups decompose as iterated semi-direct products of powers of $\mathbb{Z}/2\mathbb{Z}$.

\medskip \noindent
More precisely, given an arbitrary group $A$ and an integer $r \geq 1$, let $A \wr_{r} \mathbb{Z}/2\mathbb{Z}$ denote the permutational wreath product $A^r \rtimes \mathbb{Z}/2\mathbb{Z}$ where the right factor acts on the direct sum by sending the $i$th factor to the $(r-i+1)$th factor. Then:

\begin{conj}
A finite group embeds into the cactus group $J_n$ if and only if it embeds into 
$$( \cdots (\mathbb{Z}/2\mathbb{Z} \wr_{r_1} \mathbb{Z}/2\mathbb{Z}) \wr_{r_2} \cdots ) \wr_{r_s} \mathbb{Z}/2\mathbb{Z}$$
for some $r_1, \ldots, r_s$ satisfying $r_1 \cdots r_s \leq n/2$. 
\end{conj}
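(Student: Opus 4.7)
The plan is to exploit the median geometry of $\mathscr{C}_n$ together with an inductive analysis of laminar families of intervals. For the ``if'' direction, given $r_1, \ldots, r_s$ with $r_1 \cdots r_s \leq n/2$, I would construct the iterated wreath product as the stabiliser of an explicit cube. Using the strands $\{1, \ldots, 2r_1 \cdots r_s\} \subseteq \{1, \ldots, n\}$, I partition them hierarchically: at the deepest level into consecutive pairs $\{2k-1, 2k\}$, then into groups of $r_1$ consecutive pairs, then groups of $r_2$ such blocks, and so on. This yields a reversal-symmetric laminar family $\mathcal{F}$ of intervals. The cube $C$ at $1$ spanned by the reflections $\{s_{[a,b]} : [a,b] \in \mathcal{F}\}$ has stabiliser isomorphic to the iterated wreath product: applying the flip relation $s_{p,q}s_{m,r} = s_{p+q-r,p+q-m}s_{p,q}$ inductively shows that the reflection at each level conjugates its sub-reflections via the $\mathbb{Z}/2\mathbb{Z}$-reversal action on the $r_i$ children, giving $\mathrm{stab}(C) \cong (\cdots(\mathbb{Z}/2\mathbb{Z} \wr_{r_1} \mathbb{Z}/2\mathbb{Z}) \wr_{r_2} \cdots) \wr_{r_s} \mathbb{Z}/2\mathbb{Z}$.

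For the ``only if'' direction, let $F \leq J_n$ be a finite subgroup. By the standard fixed-point theorem for finite groups acting on CAT(0) cube complexes, $F$ stabilises some cube $C$ in $\mathscr{C}_n$, and conjugating by a vertex of $C$ we may assume $1 \in C$. By Lemma~\ref{lem:LabelsHyp}, the hyperplanes of $C$ are labelled by a laminar family $\mathcal{F}$ of intervals of $\{1, \ldots, n\}$, since the edges at $1$ are labelled by intervals $[a,b]$. Since $J_n$ acts freely on vertices of $\mathscr{C}_n$, the map $\mathrm{stab}(C) \hookrightarrow C$ sending $g$ to $g \cdot 1 = g$ is injective, so every element of $\mathrm{stab}(C)$ is a product of reflections $s_B$ for $B \in \mathcal{F}$. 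By Theorem~\ref{thm:conspicial}, $\Sigma$ is injective on $\mathrm{stab}(C)$, and thus $\mathrm{stab}(C)$ embeds into the subgroup $\mathrm{Rev}(\mathcal{F}) := \langle \sigma_B : B \in \mathcal{F}\rangle \leq S_n$ generated by the interval-reversals. Replacing $\mathcal{F}$ by its reversal-closure $\mathcal{F}^*$ leaves $\mathrm{Rev}$ unchanged but makes the laminar tree reversal-symmetric, and I then analyse $\mathrm{Rev}(\mathcal{F}^*)$ inductively on the depth of the tree: at a maximal interval $A$ of $\mathcal{F}^*$ with children $A_1, \ldots, A_r$, the reversal $\sigma_A$ conjugates the subtree generators via the involution $A_i \mapsto A_{r+1-i}$, yielding a $\mathbb{Z}/2\mathbb{Z}$-wreath of the subtree reversal-groups. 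Taking $r_i$ to be the maximum branching encountered at depth $i$ and using that $\mathbb{Z}/2\mathbb{Z} \wr_{r'} \mathbb{Z}/2\mathbb{Z} \hookrightarrow \mathbb{Z}/2\mathbb{Z} \wr_r \mathbb{Z}/2\mathbb{Z}$ for $r' \leq r$, I obtain the desired embedding of $F$. The bound $r_1 \cdots r_s \leq n/2$ follows by counting: each leaf of the laminar tree corresponds to an interval of length at least $2$, and there are at most $r_1 \cdots r_s$ leaves in total.

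The main obstacle is making the inductive step rigorous when the laminar tree is non-uniform (different internal nodes at the same depth may have different branching or subtree depths), and when the original $\mathcal{F}$ is not reversal-symmetric. In the latter case, $\mathrm{stab}(C)$ may be a proper subgroup of the ``natural'' iterated wreath product of the corresponding tree, and one must carefully verify that it still embeds into a uniform iterated wreath product of the conjectured form while respecting the strand count constraint. Managing the coherence between different branching factors at the same depth and relating them to the number of strands will require a delicate tree-combinatorial argument that I would expect to be the technical heart of the proof.
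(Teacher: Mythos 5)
First, a point of calibration: the statement you are proving is stated in the paper as a \emph{conjecture} in the section of open problems; the paper offers no proof, so your attempt cannot be measured against one. Judged on its own merits, your outline identifies the right geometric starting point (finite subgroups stabilise cubes, cube-stabilisers inject into $S_n$ by Theorem~\ref{thm:conspicial}, cube labels form a laminar family by Lemma~\ref{lem:HypTransverseLabel}), but the reduction you then propose is not merely incomplete, it is unsound at two places. (i) The reversal-closure $\mathcal{F}^*$ of a laminar family need not be laminar: take $\mathcal{F}=\{[1,2],[1,3],[1,4]\}$, a legitimate label set of a $3$-cube at $1$ in $\mathscr{C}_4$ since its members are pairwise nested; reflecting $[1,2]$ across $[1,4]$ produces $[3,4]$, which overlaps $[1,3]$ without being nested in it, so there is no ``laminar tree'' on $\mathcal{F}^*$ to induct on. (ii) More seriously, the group $\mathrm{Rev}(\mathcal{F})$ can be far too large for your chain $\mathrm{stab}(C)\hookrightarrow\mathrm{Rev}(\mathcal{F})\hookrightarrow(\cdots)\wr_{r_s}\mathbb{Z}/2\mathbb{Z}$ to exist: in the same example $\mathrm{Rev}(\mathcal{F})=\langle (12),(13),(14)(23)\rangle=S_4$, which is not a $2$-group and hence embeds into none of the conjectured wreath products (consistently with Corollary~\ref{cor:Torsion}, finite subgroups of $J_n$ are $2$-groups, but $\mathrm{Rev}(\mathcal{F})$ is not one). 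So passing from the cube-stabiliser to the full reversal group destroys exactly the information the conjecture is about; any proof must pin down which products of reflections genuinely stabilise the cube.

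Two further gaps. In the ``only if'' counting, choosing $r_i$ to be the \emph{maximum} branching at depth $i$ does not respect the bound: for leaves $[1,2],\ldots,[9,10]$ grouped as $[1,6]$ (three pairs) and $[7,10]$ (two pairs) under $[1,10]$, your recipe gives $r_1r_2=3\cdot 2=6>5=n/2$. Embedding the group of a non-uniform tree into a uniform iterated wreath product \emph{while keeping} $r_1\cdots r_s\leq n/2$ is precisely the content of the conjecture, not a routine verification to be deferred. In the ``if'' direction, the claim that the subgroup generated by your hierarchical family is the full iterated wreath product (no collapse of relations) is asserted but not proved; it is plausible and could likely be extracted from the normal form of Proposition~\ref{prop:Normal} or the top-to-bottom decomposition, but note also the degenerate case $r_i=1$, where your blocks coincide with their unique child and the construction literally produces repeated generators (and where, read literally, the ``if'' direction of the statement is itself problematic, suggesting the intended hypothesis is $r_i\geq 2$). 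In summary, the skeleton (cube-stabilisers, strand labels, injectivity of $\Sigma$ on stabilisers) is the natural one, but the steps you flag as ``technical'' are exactly where the open difficulty lives, and the specific reductions you propose for them fail.
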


\noindent
Another natural family of subgroups to investigate is given by free abelian groups. Can we compute the \emph{algebraic dimensions} of cactus groups, i.e.\ the maximal ranks of free abelian subgroups? We can easily construct a free abelian subgroup of rank $\lfloor n/3 \rfloor$ in $J_n$. We conjecture that we cannot do better.

\begin{conj}\label{conj:AlgebraicDim}
The cactus group $J_n$ contains a free abelian subgroup of rank $k$ if and only if $k \leq \lfloor n/3 \rfloor$. 
\end{conj}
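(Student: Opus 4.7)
The plan is to prove both implications separately. For the lower bound, namely that $\mathbb{Z}^{\lfloor n/3\rfloor}$ embeds into $J_n$, I would partition $\{1, \ldots, n\}$ into $\lfloor n/3\rfloor$ consecutive blocks of three strands (discarding leftover strands on the right) and consider the subgroup of $J_n$ generated by the generators $s_{p,q}$ whose interval $[p,q]$ is contained in a single block. Generators from different blocks have disjoint-support intervals and therefore mock-commute, so this subgroup splits as a direct product $J_3^{\lfloor n/3\rfloor}$. As already noted in the proof of Corollary~\ref{cor:Center}, each $J_3$ is infinite dihedral, and therefore contains a copy of $\mathbb{Z}$. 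Consequently $\mathbb{Z}^{\lfloor n/3\rfloor} \hookrightarrow J_n$.

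For the upper bound, suppose $A \cong \mathbb{Z}^k$ is a free abelian subgroup of $J_n$. The plan is to leverage the proper cocompact action of $J_n$ on the median graph $\mathscr{C}_n$ (Theorem~\ref{thm:CAT}). Applying standard flat-detection results for free abelian subgroups acting cocompactly on CAT(0) cube complexes (for instance those of Woodhouse, or rank-rigidity in the Caprace--Sageev framework), one may, after passing to a finite-index subgroup, assume that $A$ preserves an isometrically embedded cubical flat $F$ of dimension $k$ in the cube-completion $\mathscr{A}_n$ and acts cocompactly on $F$ by translations. The flat $F$ decomposes as a combinatorial product of $k$ axes $\ell_1, \ldots, \ell_k$; each $\ell_i$ is crossed by an infinite family $\mathcal{H}_i$ of parallel hyperplanes, and every hyperplane of $\mathcal{H}_i$ is transverse to every hyperplane of $\mathcal{H}_j$ when $i \neq j$. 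Combining Lemma~\ref{lem:LabelsHyp} and Lemma~\ref{lem:HypTransverseLabel}, the labels decorating the hyperplanes of $\mathcal{H}_i$ and $\mathcal{H}_j$ are pairwise either disjoint or properly nested, so the union of all these labels carries a laminar structure split across the $k$ families.

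The remaining step, which I expect to be the main obstacle, is a counting argument turning this laminar arrangement into the inequality $n \geq 3k$. The guiding heuristic is that the subgroup of $J_n$ supported on any interval of size at most two is at most $\mathbb{Z}/2\mathbb{Z}$, so no infinite-order element can be realized using fewer than three strands of genuine activity; hence each family $\mathcal{H}_i$ ought to ``consume'' at least three strands. The plan is to attach to each $\mathcal{H}_i$ a minimal nontrivial label $S_i$ and then extract from it a triple $T_i \subset S_i$ of strands that are effectively permuted by the translation along $\ell_i$. The task is then to show that the $T_i$ can be chosen pairwise disjoint, using the laminar structure together with an exchange argument to relocate $T_i$ whenever a strand is contested, and inducting on $n$ by peeling off an outermost label via the bottom-to-top decomposition of Corollary~\ref{cor:SplitReflection}. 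The delicate point I do not yet see clearly is how to handle towers of deeply nested labels: a priori several commuting axes could live inside a single deeply nested chain and share strands, and ruling this out seems to require a direct classification of commuting pairs of infinite-order elements whose supports are nested, which is where I would expect the real technical work to lie.
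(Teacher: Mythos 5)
The statement you are trying to prove is not a theorem of the paper: it is stated there as an open conjecture (Conjecture~\ref{conj:AlgebraicDim}), and the paper itself only records the easy half, namely the construction of a rank-$\lfloor n/3\rfloor$ free abelian subgroup from $\lfloor n/3\rfloor$ disjoint blocks of three strands, each contributing an infinite dihedral $J_3$. Your lower-bound argument is exactly this and is correct. For the converse, the paper does not give a proof; it only observes that the conjecture is equivalent to a geometric statement about isometrically embedded copies of the $\mathbb{Z}^k$ Cayley graph in $\mathscr{C}_n$ (one direction via \cite{MR3625111,MR3704240}, the other via \cite{SpecialHyp,MR4242153} together with Theorem~\ref{thm:conspicial}), and expresses the hope that this reformulation is more accessible.

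Your upper-bound sketch follows essentially that same reduction (pass to a cubical flat, decorate the $k$ transverse families of hyperplanes with labels via Lemmas~\ref{lem:LabelsHyp} and~\ref{lem:HypTransverseLabel}, and try to show each family consumes three strands), but the decisive step is missing, and you say so yourself: the counting argument must rule out several commuting axes whose hyperplane labels form a deeply nested chain and share strands, and no mechanism for this is provided. That is not a technical loose end one can expect to patch by an exchange argument plus induction on the bottom-to-top decomposition; it is precisely the open content of the conjecture, so the proposal as written does not establish the statement. (A smaller point of care: the passage from an abstract $\mathbb{Z}^k\leq J_n$ to a combinatorial flat whose $k$ hyperplane families are pairwise transverse is not automatic from the flat torus theorem alone; it relies on the cited cubical flat/quasi-flat results, made applicable here through the conspicial action of $PJ_n$, so that reduction should be attributed rather than asserted.)
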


\noindent
It is worth noticing that this conjecture can be rephrased purely geometrically: the median graph $\mathscr{C}_n$ contains an isometrically embedded copy of the canonical Cayley graph of $\mathbb{Z}^k$ if and only if $k \leq \lfloor n/3 \rfloor$. One direction of the equivalence between our two assertions is given by \cite{MR3625111} or \cite{MR3704240}, and the other direction is given by \cite{SpecialHyp} (see also \cite{MR4242153}), which applies thanks to Theorem~\ref{thm:conspicial}. We expect this geometric formulation of Conjecture~\ref{conj:AlgebraicDim} to be more accessible. 

\medskip \noindent
We already mentioned that the algebraic dimension of our cactus group $J_n$ is bounded below by $\lfloor n/3 \rfloor$. As a consequence of the previous discussion, it is also bounded above by the dimension of the cube-completion of $\mathscr{C}_n$, which coincides with the maximal number of pairwise disjoint or nested intervals in $\{1, \ldots, n\}$, i.e.\ $n-1$. 

\medskip \noindent
Characterising cactus groups with no $\mathbb{Z}^2$ is related (and in fact equivalent) to hyperbolicity. As mentioned in the proof of Proposition~\ref{prop:NotHyp}, the cactus group $J_n$ contains $\mathbb{Z}^2$ as soon as $n \geq 6$. But what can be said about $J_n$ for $n =4, 5$? We saw with Corollary~\ref{cor:Surface} that $J_4$ is virtually a surface group, but what about $J_5$?

\begin{question}
What is the structure of the cactus group $J_5$? of its pure subgroup $PJ_5$?
\end{question}

\noindent
Notice that it follows from Proposition~\ref{prop:Manifold} that $PJ_5$ is the fundamental group of a compact hyperbolic $3$-manifold. In particular, $PJ_5$ cannot be a surface group.

\paragraph{Homology groups.} Because cube-completion of median graphs are contractible and since pure cactus groups act on their median graphs with trivial cube-stabilisers, we obtain compact classifying spaces of pure cactus groups. More precisely, fix an $n \geq 2$ and let $\mathscr{S}_n$ denote the cube complex
\begin{itemize}
	\item whose vertices are the permutations of $\{1, \ldots, n\}$;
	\item whose vertices connect two permutations $\mu,\nu$ whenever $\mu = \iota \nu$ for some central inversion $\iota$ of an interval in $\{1, \ldots, n\}$;
	\item whose $k$-cubes are spanned by edges of the forms $(\sigma,\iota_1 \sigma), \ldots, (\sigma, \iota_k \sigma)$ where $\iota_1, \ldots, \iota_k$ are central inversions of pairwise disjoint or nested intervals in $\{1, \ldots, n\}$.
\end{itemize}
Because $\mathscr{S}_n$ coincides with the quotient modulo $PJ_n$ of the cube-completion of $\mathscr{C}_n$, it defines a classifying space for $PJ_n$. Consequently, $\mathscr{S}_n$ can be useful in order to compute various algebraic invariants. For instance, is it possible to use discrete Morse theory or a collapsing scheme in order to compute homology groups of pure cactus groups? This could allow us to recover some of the results obtained in \cite{MR2630055} by using different methods. As another possible application:

\begin{problem}
Describe group presentations of pure cactus groups.
\end{problem}

\noindent
In particular, this would allow us to compute abelianisations of pure cactus groups.

\paragraph{Generalisation.} Mimicking the definition of cactus groups, one can associate a generalised cactus group to every Coxeter group. Given a Coxeter system $(W,S)$, a non-empty subset $I \subset S$ is \emph{connected} if the subgroup $W_I \subset W$ is finite and does not decompose as $W_J \times W_K$ for some disjoint non-empty subsets $J,K \subset S$. The \emph{generalised cactus group} $C_W$ is then defined by generators $s_I$, $I \subset S$ connected, satisfying the relations:
\begin{itemize}
	\item $s_I^2=1$ for every $I \subset S$ connected;
	\item $s_Is_J = s_J s_{w_J(I)}$ for all connected $I,J \subset S$ satisfying $I \subset J$ or $W_{I \cup J} = W_I \times W_J$, where $w_J$ denotes the longest element in $W_J$.
\end{itemize}
The group $C_W$ naturally surjects onto the Coxeter group $W$, and the kernel of the morphism is referred to as the \emph{generalised pure cactus group} $PC_W$. Cactus groups correspond to generalised cactus groups associated to symmetric groups. 

\medskip \noindent
It is natural to ask whether the geometric perspective on cactus groups described in this article generalises. 

\begin{problem}
Describe the median geometry of generalised cactus groups.
\end{problem}

\noindent
It is worth mentioning that generalised pure cactus groups also embed into right-angled Coxeter groups \cite{GenCactus}, so we already know that such a median geometry exists (since canonical Cayley graphs of right-angled Coxeter groups are median). The problem is to determine whether it is as useful as for cactus groups. As a concrete question in this direction:

\begin{problem}
Solve explicitly and efficiently the conjugacy problem in generalised pure cactus groups. 
\end{problem}

\appendix

\section{Embeddings into right-angled Coxeter groups}

\noindent
This section is dedicated to the proof of Theorem~\ref{thm:Special}. A similar statement for right-angled Artin groups can be found in \cite{MR4242153}. Our proof follows the point of view introduced in \cite{QMspecial}. 

\medskip \noindent
First of all, recall that, given a (simple) graph $\Gamma$, a word of generators $w:= \ell_1 \cdots \ell_r$ is \emph{$\Gamma$-reduced} if there do not exist $1 \leq i< j \leq r$ such that $\ell_i=\ell_j$ and such that this common letter is adjacent in $\Gamma$ to all the letter $\ell_k$ for $i< k < j$. Any element of the right-angled Coxeter group $C(\Gamma)$ can be represented by a $\Gamma$-reduced word, which is unique up to permuting commuting letters. In particular, a $\Gamma$-reduced word represents the trivial element in $C(\Gamma)$ if and only if it is empty. 

\medskip \noindent
Now, let $G$ be a group acting on a median graph $X$. Fix colouring $\mathfrak{c}$ of the hyperplanes of $X$ and $\Gamma$ a graph whose vertices are the possible colours of a hyperplane. Assume that:
\begin{itemize}
	\item no two hyperplanes of the same colour are transverse;
	\item the colours of any two transverse hyperplanes are adjacent in $\Gamma$;
	\item $\mathfrak{c}$ is $G$-equivariant, i.e.\ $\mathfrak{c}(gA)=\mathfrak{c}(A)$ for every $g \in G$ and every hyperplane~$A$.
\end{itemize}
Fix a basepoint $o \in X$ and define a map $G \to C(\Gamma)$ as  
$$\Omega : g \mapsto \begin{array}{c} \text{word of colours given by the hyperplanes} \\ \text{successively crossed by a path from $o$ to $go$} \end{array}.$$
Let us verify that the element of $C(\Gamma)$ we get does not depend on the path we choose. Because any two paths in a median graph with the same endpoints can be connected by a sequence of elementary operations, namely flipping $4$-cycles and adding or removing backtracks, it suffices to verify that these operations do not affect the element of $C(\Gamma)$ we get.

\medskip \noindent
So fix a path $[o,go]$ from $o$ to $go$ and let $w$ be the corresponding word of colours. Removing a backtrack from $[o,go]$ transforms $w$ by removing two consecutive letters having the same colour. This does not modify the element of $C(\Gamma)$. Adding a backtrack to $[o,go]$ transforms $w$ by adding somewhere two consecutive letters having the same colour. Again, this does not modify the element of $C(\Gamma)$. Finally, flipping a $4$-cycle transforms $w$ by switching two consecutive letters whose colours are adjacent in $\Gamma$. And this does not modify the element of $C(\Gamma)$ either.

\medskip \noindent
Thus, $\Omega$ is well-defined. This is moreover a morphism. Indeed, for all $g,h \in G$, the word of colours labelling a path $[o,gh \cdot o]$ which is the concatenation of a path $[o,go]$ with a path $g [o,ho]$ is the concatenation of the word labelling $[o,go]$ with the word labelling $[o,ho]$ (which is also the word labelling $g [o,ho]$ since $\mathfrak{c}$ is $G$-equivariant), hence the equality $\Omega(gh)=\Omega(g) \Omega(h)$. 

\medskip \noindent
In general, the morphism $\Omega$ may be trivial. We need some extra-assumptions in order to make it interesting. From now on, we assume that:
\begin{itemize}
	\item any two hyperplanes whose carriers intersect are equal if and only if they have the same colours;
	\item any two hyperplanes whose carriers intersect are transverse if and only if their colours are adjacent in $\Gamma$.
\end{itemize}
We claim that now $\Omega$ must be injective. More precisely, we want to prove that, given an element $g \in G$, the word of colours labelling a geodesic $[o,go]$ is $\Gamma$-reduced. Otherwise, there exist two edges $e_1,e_2 \subset [o,go]$ such that the hyperplanes containing $e_1,e_2$ have the same colour and such that this colour is adjacent in $\Gamma$ with the colours of all the hyperplanes crossing $[o,go]$ between $e_1,e_2$. Consider the edge $e$ adjacent to $e_2$ and lying between $e_1,e_2$ along $[o,go]$, if such an edge exists. Then it follows from our assumptions that the hyperplanes containing $e$ and $e_2$ are transverse, which amounts to saying that $e$ and $e_2$ span a $4$-cycle. Thus, by flipping a $4$-cycle we can make $e_2$ closer to $e_1$. By iterating this process, we can reduce to the case where $e_1$ and $e_2$ are adjacent along our geodesic $[o,go]$. It follows from our assumptions that the hyperplanes containing $e_1$ and $e_2$ are identical, which amounts to saying that $e_1=e_2$. But this is impossible since $[o,go]$, as a geodesic, cannot contain a backtrack. This concludes the proof of our claim, and of Theorem~\ref{thm:Special}.

\addcontentsline{toc}{section}{References}

\footnotesize
\bibliographystyle{alpha}
\bibliography{Cactus}

\end{document}